\def\th@plain{%
  \thm@notefont{}
  \itshape 
}
\def\th@definition{%
  \thm@notefont{}
  \normalfont 
}
\newtheorem{theorem}{Theorem}
\newtheorem{col}{Corollary}
\newtheorem{lemma}{Lemma}
\newtheorem*{tuzconj}{Tuza's Conjecture (1986)}
\newtheorem{exmp}{Example}
\theoremstyle{definition}
\newtheorem{rem}{Remark}
\theoremstyle{plain}
\newcommand{\qedclaim}{
\hfill\scalebox{0.6}{$\blacksquare$}}
\newcommand\blfootnote[1]{%
  \begin{NoHyper}
  \renewcommand\thefootnote{}\footnote{#1}%
  \addtocounter{footnote}{-1}%
  \end{NoHyper}
}
\title{The asymptotic behaviour of $sat(n,\mathcal{F})$}
\author{Asier Calbet and Andrea Freschi}
\date{}
\begin{document}

\maketitle

\begin{abstract}

    \noindent For a family $\mathcal{F}$ of graphs, $sat(n,\mathcal{F})$ is the minimum number of edges in a graph $G$ on $n$ vertices which does not contain any of the graphs in $\mathcal{F}$ but such that adding any new edge to $G$ creates a graph in $\mathcal{F}$. For singleton families $\mathcal{F}$, Tuza conjectured that  $sat(n,\mathcal{F})/n$ converges and Truszczynski and Tuza discovered that either $sat(n,\mathcal{F})= \left(1-1/r\right)n+o(n)$ for some integer $r \geq 1$ or $ sat(n,\mathcal{F}) \geq n+o(n) $. This is often cited in the literature as the main progress towards proving Tuza's Conjecture. Unfortunately, the proof is flawed. We give a correct proof, which requires a novel construction. Moreover, for finite families $\mathcal{F}$, we completely determine the possible asymptotic behaviours of $sat(n,\mathcal{F})$ in the sparse regime $sat(n,\mathcal{F}) \leq n+o(n)$. Finally, we essentially determine which sequences of integers are of the form $\left(sat(n,\mathcal{F})\right)_{n \geq 0}$ for some (possibly infinite) family $\mathcal{F}$. 
\end{abstract}

\blfootnote{AC: School of Mathematical Sciences, Queen Mary University of London, United Kingdom, \tt{a.calbetripodas@qmul.ac.uk}}
\blfootnote{AF: School of Mathematics, University of Birmingham, United Kingdom, \tt{axf079@bham.ac.uk}}

\section{Introduction}

Given a family $\mathcal{F}$ of graphs, we say that a graph $G$ is \emph{$\mathcal{F}$-saturated} if it is maximally $\mathcal{F}$-free, meaning $G$ does not contain a graph in~$\mathcal{F}$ but every graph $G+e$ obtained by adding a new edge $e$ to $G$ contains a graph in~$\mathcal{F}$. We then define $sat(n,\mathcal{F})$ (for integers $n \geq 0$ and non-empty families $\mathcal{F}$ with $e(F) \geq 1$ for all $F \in \mathcal{F}$) to be the minimum number of edges in an $\mathcal{F}$-saturated graph on $n$ vertices. If $\mathcal{F}=\{F\}$ is a singleton, we say that a graph is $F$-saturated rather than $\mathcal{F}$-saturated and write $sat(n,F)$ instead of $sat(n,\mathcal{F})$. \newline

The saturation problem is to determine, or at least estimate, $sat(n,\mathcal{F})$, where we usually think of $\mathcal{F}$ as fixed and $n$ as large. The saturation problem is dual to the Tur\'an forbidden subgraph problem of determining the extremal number $ex(n,\mathcal{F})$, defined to be the maximum number of edges in an $\mathcal{F}$-saturated graph on $n$ vertices. However, much less is known for the saturation problem than for the Tur\'an problem and the saturation number behaves more irregularly than the extremal number. See \cite{Survey} for a survey of the saturation problem. \newline

\noindent K\'aszonyi and Tuza showed that $sat(n,\mathcal{F})=O(n)$ for all families $\mathcal{F}$ (see Theorem 1 in~\cite{KT}). Tuza conjectured that if $\mathcal{F}$ is a singleton, $sat(n,\mathcal{F}) / n$ converges (see Conjecture 10 in \cite{tuzaconj}). 

\begin{tuzconj}\label{conj:asymp}
For every graph $F$, the limit $\lim\limits_{n\to\infty} sat(n,F)/n$ exists.
\end{tuzconj}

Despite being arguably the main open problem in the area of graph saturation, little progress has been made towards settling this conjecture. Pikhurko made some progress in the negative direction by showing there is an infinite family $\mathcal{F}$ of graphs such that $sat(n,\mathcal{F}) / n$ diverges (see Section 2.2 in \cite{Pik1}) and later improved this by showing there is a family $\mathcal{F}$ of four graphs such that $sat(n,\mathcal{F}) / n$ diverges (see Example 5 in \cite{Pik2}). This was then further improved by Chakraborti and Loh to a family $\mathcal{F}$ of three graphs (see Theorem 1.8 in \cite{counterfam}). \newline

This is in keeping with a recurring theme in  graph saturation, whereby the larger one allows $\mathcal{F}$ to be, the more irregularly $sat(n,\mathcal{F})$ can behave. With this in mind, in this paper we distinguish three cases: the case where $\mathcal{F}=\{F\}$ is a singleton, the case where $\mathcal{F}$ is finite and the case where $\mathcal{F}$ is possibly infinite, or in other words, where $\mathcal{F}$ is arbitrary. \newline

For arbitrary families $\mathcal{F}$, we obtain the following result, which essentially determines which sequences of integers are of the form $\left(sat(n,\mathcal{F})\right)_{n \geq 0}$ for some family $\mathcal{F}$.

\begin{theorem}\label{arbfam}
There exist sequences $(a_m)_{m \geq 0}$ and $(b_m)_{m \geq 0}$ of non-negative integers with $ \exp\left[m \log m + O(m)\right] \geq a_m \geq b_m \geq \exp\left[m \log m + O(m \log \log m)\right]$ such that the following hold.

    \begin{enumerate}    
        \item For every family $\mathcal{F}$ of graphs, the following hold.
        \begin{enumerate}
            \item $sat(n,\mathcal{F})=O(n)$.
            \item For every integer $m \geq 0$, the set $\{n \in \mathbb{Z}_{\geq 0} : sat(n,\mathcal{F})=m\}$ is the union of at most $a_m$ intervals.
            \item $sat(n,\mathcal{F})$ either tends to infinity or is eventually constant.
        \end{enumerate}
        \item Let $(s_n)_{n \geq 0}$ be a sequence of integers such that the following hold.
        \begin{enumerate}
            \item $s_n=O(n)$.
            \item For every large enough integer $m$, the set $\{n \in \mathbb{Z}_{\geq 0} : s_n=m\}$ is the union of at most $b_m$ intervals.
            \item  $s_n \to \infty$ as $n \to \infty$.
        \end{enumerate}
        Then there exists a family $\mathcal{F}$ of graphs such that $sat(n,\mathcal{F})=s_n$ for large enough $n$.
        \item For every integer $m \geq 0$, there exists a finite family $\mathcal{F}$ of graphs such that  $sat(n,\mathcal{F})=m$ for large enough~$n$.
    \end{enumerate}
\end{theorem}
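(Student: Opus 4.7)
I would handle the three parts in order; item (a) of Part 1 is the K\'aszonyi--Tuza linearity bound from \cite{KT}, and for items (b) and (c) the starting point is that any $\mathcal{F}$-saturated graph $G$ on $n$ vertices with $m$ edges decomposes uniquely as $G = H \sqcup (n - v(H))K_1$, where $H$ is the subgraph induced on the non-isolated vertices. Since $H$ has $m$ edges and no isolated vertex, $v(H) \leq 2m$, so up to isomorphism $H$ lies in one of at most $\exp[m\log m + O(m)]$ classes. I would then show that for each fixed core $H$, the set of $n$ for which $H \sqcup (n-v(H))K_1$ is $\mathcal{F}$-saturated is a union of at most three intervals. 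Indeed, the graph $H \sqcup (n-v(H))K_1$ contains a given $F \in \mathcal{F}$ if and only if $H$ contains $F_0$ (the graph obtained from $F$ by deleting its isolated vertices) and $n \geq v(F)$, so $\mathcal{F}$-freeness is monotone non-increasing in $n$. Dually, the saturation condition decomposes by the ``type'' of non-edge $e$ (inside $H$, pendant from $H$, or between two isolated vertices), and for each type the core of $G+e$ is independent of $n$, making each sub-condition monotone non-decreasing in $n$. Once $n \geq v(H)+2$ all three non-edge types are present, and the intersection of a down-closed set with an up-closed set is an interval; the boundary values $n \in \{v(H), v(H)+1\}$ each contribute at most one extra singleton. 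Summing over cores with at most $m$ edges, and taking the set difference between the ``$\leq m$'' and ``$\leq m-1$'' sets, yields $a_m \leq \exp[m\log m + O(m)]$, proving (b). Item (c) then follows: if $sat(n,\mathcal{F})$ does not tend to infinity, some value $m$ is attained infinitely often, so by (b) the set $\{n : sat(n, \mathcal{F}) = m\}$ must contain an unbounded interval, beyond which no strictly smaller value can occur.

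\textit{Part 3.} For $m = 0$, take $\mathcal{F} = \{K_2\}$. For $m \geq 1$, I would use $\mathcal{F} = \{2K_2,\, K_{1,m+1},\, K_3\}$: a short case check shows that the $\mathcal{F}$-free graphs on sufficiently many vertices are exactly the stars $K_{1,k}$ with $0 \leq k \leq m$ (together with isolated vertices); that $K_{1,m}$ plus isolated vertices is $\mathcal{F}$-saturated (any new edge creates a second matching edge, a larger star, or a triangle); and that any smaller star fails saturation, since one can add a pendant at the centre without creating a forbidden subgraph.

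\textit{Part 2.} This is the main obstacle. Given the prescribed sequence $(s_n)$, the plan is to assign to each maximal interval $I \subseteq \{n : s_n = m\}$ a distinct witness core $H_I$ with $m$ edges and no isolated vertices, drawing from the $b_m \geq \exp[m\log m + O(m\log\log m)]$ available isomorphism classes. One then engineers $\mathcal{F}$ so that $H_I \sqcup (n-v(H_I))K_1$ is $\mathcal{F}$-saturated precisely for $n \in I$, and so that no graph on $n$ vertices with fewer than $s_n$ edges is $\mathcal{F}$-saturated. For a single core $H_I$ and target interval $I$, the behaviour can be realised locally by adjoining to $\mathcal{F}$ graphs whose vertex counts match the endpoints of $I$ and whose edge-parts cover each of the three types of non-edge extension of $H_I$. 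The delicate step is to do this simultaneously across all $(m, I)$: forbidden graphs introduced to control one pair must not accidentally destroy $\mathcal{F}$-freeness or $\mathcal{F}$-saturation for another. Making the cores pairwise distinguishable by their fine structure --- so that each forbidden graph ``recognises'' only its intended core --- is where the novel construction alluded to in the introduction is required, and this combinatorial design is where I expect the technical heart of the argument to lie.
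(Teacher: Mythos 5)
Your Part 1 follows the paper's argument almost exactly: the same decomposition into a core plus isolated vertices, the same monotonicity argument showing each core contributes $O(1)$ intervals (the paper gets two where you get three, which is immaterial), the same set-difference step, and the same deduction of (c) from (b). Your Part 3 uses a different but perfectly valid family, $\{2K_2, K_{1,m+1}, K_3\}$, in place of the paper's more generic ``$H$ plus an edge / pendant / isolated edge'' construction; both work and yours is arguably slicker, though the paper's generalises more readily to the machinery needed in Part 2.

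The genuine gap is Part 2, and you have correctly located it but not filled it. Your plan --- assign a distinct core $H_I$ with $m$ edges to each interval $I$ of $\{n : s_n = m\}$ and adjoin forbidden graphs that switch saturation on exactly over $I$ --- is the right skeleton, but ``drawing from the available isomorphism classes'' is not enough, for two reasons. First, if one core contains another (or contains a forbidden graph attached to another core), the forbidden graphs cross-contaminate: a graph built from $H_I$ may contain a member of $\mathcal{F}_{I'}$, destroying $\mathcal{F}$-freeness, or may fail to be recognised by its own forbidden graphs. The paper resolves this with a dedicated lemma (Lemma~\ref{antichain}): a counting argument producing, for each large $m$, a family of $\exp[m\log m + O(m\log\log m)]$ pairwise incomparable graphs with $m$ edges, $m/\log m + O(1)$ vertices and minimum degree $\Omega(\log m)$; it is the size of this \emph{antichain}, not the number of isomorphism classes, that defines $b_m$. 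Second, and independently of recognition, you must rule out cheap $\mathcal{F}$-saturated graphs that are not of the intended form $H_I$ plus isolated vertices --- otherwise $sat(n,\mathcal{F})$ could drop below $s_n$. The paper's Claim~3 handles this via the minimum-degree condition on the cores: any vertex $v$ of an unintended saturated graph $G$ with a non-neighbour $w$ must, in $G+vw$, lie in a fresh copy of some $H_I$, forcing $d_G(v) \geq \delta(H_I)-1 \geq 2C_1+2$ and hence $e(G) > C_1(n+1) \geq s_n$. Neither the antichain property nor the minimum-degree lower bound appears in your proposal, and without them the construction cannot be completed nor the claimed value of $b_m$ justified; this is precisely the ``novel construction'' the theorem requires.
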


\newtheorem*{repth1}{Theorem \ref{arbfam}}

\begin{rem}
    Recall that part 1 (a) was proved by K\'aszonyi and Tuza. We include it only for comparison with part 2 (a).
\end{rem}

\begin{rem}
    Part 1 (c) in fact follows from part 1 (b) and is only included for comparison with parts 2 (c) and~3.
\end{rem}

Condition 2 (b) is rather mild. Indeed, we obtain the following two easy corollaries of part 2.

\begin{col}\label{incrcor}
    Let $(s_n)_{n \geq 0}$ be a sequence of integers such that the following hold.
        \begin{enumerate}[(a)]
            \item $s_n=O(n)$.
            \item $s_{n+1} \geq s_n$ for large enough $n$.
            \item  $s_n \to \infty$ as $n \to \infty$.
        \end{enumerate}
        Then there exists a family $\mathcal{F}$ of graphs such that $sat(n,\mathcal{F})=s_n$ for large enough $n$.
\end{col}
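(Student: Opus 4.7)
The plan is to deduce Corollary \ref{incrcor} directly from part 2 of Theorem \ref{arbfam}, so the bulk of the work is simply to check that the three hypotheses of Corollary \ref{incrcor} imply the three hypotheses 2(a), 2(b), 2(c) of the theorem. Hypotheses (a) and (c) of the corollary coincide verbatim with 2(a) and 2(c), so the only thing to verify is the interval condition 2(b). That is precisely the point at which the monotonicity assumption $s_{n+1} \geq s_n$ is exploited.

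Concretely, fix $N$ such that $(s_n)_{n \geq N}$ is non-decreasing, and let $M := \max_{n<N} s_n$. For any integer $m > M$, no index $n<N$ satisfies $s_n=m$, so
\[
\{n \in \mathbb{Z}_{\geq 0} : s_n=m\} = \{n \geq N : s_n = m\},
\]
and the right-hand side is a (possibly empty) interval since it is a level set of a non-decreasing sequence. Thus for every sufficiently large $m$ the set $\{n : s_n=m\}$ is a union of at most one interval, which is certainly at most $b_m$ intervals (for $m$ large, the given lower bound $b_m \geq \exp[m\log m + O(m\log\log m)]$ ensures $b_m \geq 1$). This verifies 2(b).

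With all three hypotheses of part 2 in place, the theorem produces a family $\mathcal{F}$ with $sat(n,\mathcal{F})=s_n$ for large enough $n$, which is exactly the conclusion of the corollary. No obstacle is expected here: the entire content of the corollary is the observation that an eventually monotone sequence trivially has single-interval level sets once $m$ exceeds the finitely many initial values, so the mild quantitative hypothesis 2(b) is automatically satisfied. All the real work lies in proving part 2 of Theorem \ref{arbfam} itself, which is assumed available.
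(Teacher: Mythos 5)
Your proposal is correct and follows essentially the same route as the paper: both reduce to verifying condition 2(b) of Theorem \ref{arbfam} by observing that, once $m$ exceeds the finitely many initial values of the sequence, the level set $\{n : s_n = m\}$ is a single interval by eventual monotonicity, and that $b_m \geq 1$ for large $m$. No issues.
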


\newtheorem*{repincrcor}{Corollary \ref{incrcor}}

It follows from Corollary \ref{incrcor} that $sat(n,\mathcal{F})$ can tend to infinity arbitrarily slowly.

\begin{col}\label{sizecor}

    Let $(s_n)_{n \geq 0}$ be a sequence of integers with

    $$ \frac{\log n}{\log \log n} + \omega\left(\frac{\log n \log \log \log n}{(\log \log n)^2}\right)\leq s_n = O(n). $$
    
    Then there exists a family $\mathcal{F}$ of graphs such that $sat(n,\mathcal{F})=s_n$ for large enough $n$.
    
\end{col}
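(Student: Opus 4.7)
The plan is to apply part 2 of Theorem \ref{arbfam} directly. Condition 2(a) is part of the hypothesis, and condition 2(c) follows from the trivial bound $s_n \geq \log n / \log \log n \to \infty$. The substantive task is to verify condition 2(b): for every sufficiently large $m$, the set $\{n : s_n = m\}$ is a union of at most $b_m$ intervals. Since any finite set of size $k$ is trivially the union of at most $k$ intervals, it suffices to bound the cardinality of this set by $b_m$.

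Set $N_m := \sup\{n : s_n \leq m\}$, which is finite because $s_n \to \infty$; evidently $|\{n : s_n = m\}| \leq N_m + 1$. By the hypothesis, there is a function $\psi(n) \to \infty$ such that $s_n \geq \log n / \log \log n + \psi(n) \cdot \log n \log \log \log n / (\log \log n)^2$ for all sufficiently large $n$. Applying this at $n = N_m$ and writing $L = \log N_m$ gives
$$ \frac{L}{\log L} + \psi(N_m) \cdot \frac{L \log \log L}{(\log L)^2} \leq m. $$
Dropping the second term yields the coarse estimate $L = m \log m + O(m \log \log m)$, and in particular $\log L = \log m + \log \log m + o(1)$ and $\log \log L = \log \log m + o(1)$. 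Feeding this back into the second term shows that it is $\psi(N_m) \cdot m \log \log m / \log m \cdot (1+o(1))$, and a second bootstrapping step then yields the refined bound $L \leq m \log m - \psi(N_m) \cdot m \log \log m \cdot (1+o(1))$.

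Since $\psi(N_m) \to \infty$ as $m \to \infty$ and $b_m \geq \exp[m \log m + O(m \log \log m)]$, it follows that $N_m \leq b_m$ for every sufficiently large $m$. This verifies condition 2(b), so Theorem \ref{arbfam} produces the desired family $\mathcal{F}$. The only real obstacle is the two-step bootstrapping that converts the qualitative $\omega$-hypothesis into a divergent saving in the exponent of $N_m$; once this asymptotic bookkeeping is done, the corollary is an immediate application of Theorem \ref{arbfam}.
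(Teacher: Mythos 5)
Your argument is correct and is essentially the paper's: the paper verifies condition 2(b) by the contrapositive computation, showing directly that $n \geq b_m$ forces $s_n > m$ (using that $\log x/\log\log x$ and $\log x\log\log\log x/(\log\log x)^2$ are eventually increasing), whereas you invert the same bound at $N_m = \max\{n : s_n \leq m\}$ — the two computations are identical up to direction. Two cosmetic points to tidy: your "coarse estimate" is only an \emph{upper} bound on $L=\log N_m$ (so the asymptotics for $\log L$ and $\log\log L$ need the trivial case split in which $L$ is much smaller, where the conclusion is immediate), and strictly one needs $N_m+1\leq b_m$ rather than $N_m\leq b_m$; both are absorbed by the $\omega(m\log\log m)$ saving in the exponent.
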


\newtheorem*{repsizecor}{Corollary \ref{sizecor}}

It follows from Corollary \ref{sizecor} that $sat(n,\mathcal{F})$ can behave very irregularly. Indeed, we obtain the following easy corollary of Corollary \ref{sizecor}.

\begin{col}\label{AP}
    For every non-empty compact set $K$ of non-negative real numbers, there exists a family $\mathcal{F}$ of graphs such that the set of accumulation points of $\left(sat(n,\mathcal{F})/n\right)_{n \geq 1}$ is $K$. 
\end{col}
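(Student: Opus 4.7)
The plan is to deduce this from Corollary \ref{sizecor}: I will exhibit an integer sequence $(s_n)_{n \geq 1}$ satisfying the hypotheses of that corollary such that the set of accumulation points of $(s_n/n)_{n \geq 1}$ is $K$. Since altering $sat(n,\mathcal{F})$ on a finite initial segment does not affect the accumulation points of $(sat(n,\mathcal{F})/n)_{n \geq 1}$, the family $\mathcal{F}$ produced by Corollary \ref{sizecor} from such a sequence will satisfy the desired conclusion.

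To build such a sequence $(s_n)$: since $K$ is a compact, hence separable, non-empty subset of $\mathbb{R}_{\geq 0}$, it contains a sequence $k_1, k_2, \ldots$ whose closure is $K$ (with repetitions allowed so that the sequence is infinite). Fix a function $g \colon \mathbb{N} \to \mathbb{N}$ that attains every value infinitely often and a strictly increasing sequence $1 = N_1 < N_2 < \cdots$ tending to infinity sufficiently quickly. Partition the positive integers into blocks $B_j = [N_j, N_{j+1})$ and, for $n \in B_j$, set
$$ s_n \;=\; \max\bigl(\lfloor k_{g(j)}\,n \rfloor,\, \lfloor \sqrt{n} \rfloor\bigr). $$
The hypotheses of Corollary \ref{sizecor} are then routine: writing $M := \sup K$, we have $s_n \leq M n + \sqrt{n} = O(n)$; the lower bound $s_n \geq \lfloor \sqrt{n} \rfloor$ dominates $\log n / \log \log n$ plus any error term of the required order; and $s_n \to \infty$ is immediate.

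It remains to compute the accumulation set. A quick estimate shows that on block $B_j$, choosing $N_j > 1/k_{g(j)}^2$ when $k_{g(j)} > 0$ gives $k_{g(j)} n \geq \sqrt{n}$ throughout $B_j$ and hence $|s_n/n - k_{g(j)}| \leq 1/N_j$; when $k_{g(j)} = 0$, one has $|s_n/n - 0| \leq 1/\sqrt{N_j}$ directly. Choosing each $N_j$ large enough (depending on $g(j)$) therefore forces $\sup_{n \in B_j}|s_n/n - k_{g(j)}| \to 0$ as $j \to \infty$. Because each block $B_j$ is finite, any subsequence $n_\ell \to \infty$ eventually lies in blocks $B_{j_\ell}$ with $j_\ell \to \infty$, so any limit of $s_{n_\ell}/n_\ell$ equals the corresponding limit of $k_{g(j_\ell)}$ and thus belongs to $\overline{\{k_i : i \geq 1\}} = K$; conversely, every $k \in K$ is a limit of some subsequence $k_{i_m}$, and choosing $n_m \in B_{j_m}$ with $g(j_m) = i_m$ yields $s_{n_m}/n_m \to k$.

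The only delicate point is coordinating the growth of $N_j$ with the values $k_{g(j)}$: the $\sqrt{n}$ term is present both to meet the lower bound required by Corollary \ref{sizecor} and to give a uniform estimate on blocks where $k_{g(j)}$ is close to $0$. Beyond this bookkeeping, I do not expect any real obstacle, since all the substantive work has already been done in establishing Corollary \ref{sizecor}.
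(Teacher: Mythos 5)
Your proposal is correct and follows essentially the same route as the paper: both deduce the corollary from Corollary \ref{sizecor} by building an integer sequence with $s_n/n \approx k$ for each $k$ in a countable dense subset of $K$ on an infinite set of indices, using a $\sqrt{n}$ term to meet the lower bound of Corollary \ref{sizecor}. Your block decomposition with a function $g$ hitting every value infinitely often is just a cosmetic variant of the paper's partition of $\mathbb{Z}_{\geq 0}$ into infinite parts $P_s$, and your $\max(\lfloor k n\rfloor,\lfloor\sqrt{n}\rfloor)$ plays the same role as the paper's $\lceil sn+\sqrt{n}\rceil$.
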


\newtheorem*{repAP}{Corollary \ref{AP}}

 Recall that an accumulation point of a sequence is the limit of a convergent subsequence. Since $sat(n,\mathcal{F})=O(n)$, the sequence $\left(sat(n,\mathcal{F})/n\right)_{n \geq 1}$ has at least one accumulation point for every family $\mathcal{F}$. Tuza's Conjecture is equivalent to the sequence $\left(sat(n,F)/n\right)_{n \geq 1}$ having only one accumulation point for every graph $F$. Hence, for families $\mathcal{F}$, one can view the set of accumulation points of $\left(sat(n,\mathcal{F})/n\right)_{n \geq 1}$ (and in particular its cardinality) as a measure of the extent to which the analogue of Tuza's Conjecture for $\mathcal{F}$ is false. \newline

Note that the conditions on $K$ in Corollary \ref{AP} are trivially necessary: $K$ must be non-empty since $\left(sat(n,\mathcal{F})/n\right)_{n \geq 1}$ has at least one accumulation point, $K$ must be bounded since $sat(n,\mathcal{F})=O(n)$, $K$ must be closed since every set of accumulation points is closed and $K$ must contain only non-negative real numbers since $sat(n,\mathcal{F}) \geq 0$ for all integers $n \geq 0$. \newline

Let us call a family $\mathcal{F}$ of graphs \emph{quasi-finite} if for every tree $T$ there is a constant $C_T$ such that every graph in $\mathcal{F}$ has at most $C_T$ connected components isomorphic to $T$. Note that every finite family is quasi-finite. Quasi-finite families behave much like finite families for our purposes, hence the name. For quasi-finite families~$\mathcal{F}$, we obtain the following result, which completely determines the possible asymptotic behaviours of $sat(n,\mathcal{F})$ in the sparse regime $sat(n,\mathcal{F}) \leq n+o(n)$. 

\begin{theorem}\label{quasfinfam}  \leavevmode
\begin{enumerate}
    \item Let $\mathcal{F}$ be a quasi-finite family of graphs. Then for each integer $r \geq 1$, there is a set $S_r$ of residue classes modulo $r$ and for each residue class $C \in S_r$, there is an integer $k_C$, such that the following hold.

        \begin{enumerate}
        
        \item For every integer $r \geq 1$ and residue class $C \in S_r$,
        
        $$ sat(n,\mathcal{F})= \frac{(r-1)n+k_C}{r} $$
        
        for large enough $n \in C \setminus \bigcup_{q<r, D \in S_q} D$.
        
        \item For $n \in \mathbb{Z}_{\geq 0} \setminus \bigcup_{r \geq 1, C \in S_r} C$,
        
        $$ sat(n,\mathcal{F}) \geq n+o(n) . $$
        
        \item For $r \in \{1,2,3,4,5,7\}$, $S_r$ contains either none or all of the residue classes modulo $r$.
        
        \end{enumerate}

    \item For each integer $r \geq 1$, let $S_r$ be a set of residue classes modulo $r$. Suppose that for $r \in \{1,2,3,4,5,7\}$, $S_r$ contains either none or all of the residue classes modulo $r$. Then there exists a quasi-finite family $\mathcal{F}$ of graphs such that the following hold. 

        \begin{enumerate}
     
        \item For every integer $r \geq 1$, 
        
        $$sat(n,\mathcal{F})= \left(1-\frac{1}{r}\right)n+O(1)$$
        
        for $n \in \left(\bigcup_{C \in S_r} C \right) \setminus \bigcup_{q<r, C \in S_q} C$.
        
        \item For $n \in \mathbb{Z}_{\geq 0} \setminus \bigcup_{r \geq 1, C \in S_r} C$,
        
        $$ sat(n,\mathcal{F}) \geq n+o(n)  . $$ 
        
        \item If there are only finitely many integers $r \geq 1$ with $\left(\bigcup_{C \in S_r} C \right) \setminus \bigcup_{q<r, C \in S_q} C \neq \emptyset$, $\mathcal{F}$ is in fact finite.
        
        \end{enumerate}

\end{enumerate}
\end{theorem}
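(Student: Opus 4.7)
The plan is to treat Part 1 (structural) and Part 2 (constructive) separately, linked by a common analysis of sparse saturated graphs through their tree-component structure.

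For Part 1, I would first show that if $\mathcal{F}$ is quasi-finite and $sat(n, \mathcal{F}) \leq n + o(n)$ for some large $n$, then every extremal $\mathcal{F}$-saturated graph $G_n$ is forest-like: it has $\Omega(n)$ connected components, most of them trees of bounded size. The quasi-finiteness hypothesis is crucial here, because it prevents members of $\mathcal{F}$ from absorbing arbitrarily many isomorphic tree components, so a pigeonhole argument forces $G_n$ to contain $\Omega(n)$ pairwise isomorphic components of some tree type $T$ on $r$ vertices. Replacing $G_n$ by a canonical graph consisting of as many copies of $T$ as possible, together with a bounded-size remainder padding out the vertex count $n$, should yield a saturated graph with exactly $((r-1)n + k_C)/r$ edges, where $k_C$ depends only on the residue class $C$ of $n$ modulo $r$. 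Minimising $r$ over all admissible trees then gives the statements 1(a) and 1(b); the fact that one obtains an exact formula rather than only an asymptotic one comes from the observation that, for $n$ large in a fixed residue class, the optimal tree $T$ and remainder stabilise.

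The main obstacle in Part 1 is part 1(c), the dichotomy that for $r \in \{1,2,3,4,5,7\}$ the set $S_r$ is either empty or contains every residue class modulo $r$. I would prove this by a finite case analysis on unlabelled trees: for these values of $r$ there are only $1,1,1,2,3,11$ trees on $r$ vertices respectively, and for each such tree one must verify that whenever the ``many copies of $T$'' construction is $\mathcal{F}$-saturated for one residue, it can be adapted, by adding or modifying a bounded number of auxiliary components, so as to cover every other residue modulo $r$. The exclusion of $r = 6$ should reflect the existence of a specific tree on $6$ vertices where this adaptation genuinely fails; that tree will then reappear in the Part 2 construction as the obstruction witnessing the sharpness of the hypothesis.

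For Part 2, my plan is to construct $\mathcal{F}$ explicitly from the prescribed $S_r$'s. For each $r$ with $S_r \neq \emptyset$ and each $C \in S_r$, I would fix a suitable tree $T_r$ on $r$ vertices and a bounded-size auxiliary graph $H_C$ realising residue $C$, then include in $\mathcal{F}$ a carefully chosen collection of near-trees (tree-like perturbations of $T_r$) designed so that a disjoint union of copies of $T_r$ together with $H_C$ is $\mathcal{F}$-saturated with the stated number of edges, and simultaneously so that no sparser configuration is $\mathcal{F}$-saturated. Residues outside $\bigcup_{r,C} C$ would be handled by including further forbidden graphs that force $sat(n, \mathcal{F}) \geq n + o(n)$ there. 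Quasi-finiteness is automatic because each forbidden graph has only boundedly many tree components by construction, and finiteness under the hypothesis of part 2(c) then follows by simple truncation. The hardest step will be coordinating the constructions for different values of $r$: one must ensure that they do not combine to produce an unintended sparser saturated graph for some residue outside the specified $S_r$, and this is precisely where the hypothesis on $r \in \{1,2,3,4,5,7\}$ is used, by restricting which combinations of residue classes need to be realised simultaneously.
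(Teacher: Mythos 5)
Your outline for parts 1(a) and 1(b) matches the paper's approach: decompose sparse saturated graphs into a bounded ``core'' plus many pairwise isomorphic tree components (the paper's blueprints $(H,T)$), use quasi-finiteness to show that whether $H\cup kT$ is saturated is independent of $k$ once $k$ is large, and minimise $re(H)-(r-1)|H|$ over saturated blueprints to get the exact formula on each residue class. The first genuine gap is in 1(c). Saying the construction ``can be adapted, by adding or modifying a bounded number of auxiliary components, so as to cover every other residue'' names the goal but supplies no mechanism, and the obvious attempt fails: if you add an arbitrary small graph to $H\cup kT$ and re-saturate, the new edges may attach it to unboundedly many copies of $T$, so you lose all control over the residue of the resulting core. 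The paper's mechanism is specific: choose a subtree $T'\subseteq T$ with $|T'|$ coprime to $|T|$ such that joining any $v\in T'$ to sufficiently many copies of any fixed $w\in T$ creates a copy of $T$ through $v$. Then, when $H\cup kT\cup T'$ is completed to a saturated graph, each $v\in T'$ can gain only boundedly many neighbours among the copies of each $w$ (more would create a $2T+e$ and hence a forbidden graph), so the result is again a blueprint whose core has order $\equiv |H|+|T'| \pmod{|T|}$; coprimality then lets you iterate through all residues. The finite case analysis is over trees $T$ with $|T|\in\{1,2,3,4,5,7\}$, checking each admits such a $T'$ --- a property of $T$ alone, not of $\mathcal{F}$, which is essential since $\mathcal{F}$ is arbitrary. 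Also note the obstruction is not a single tree on $6$ vertices: suitable trees exist for every $r\in\{6\}\cup\mathbb{Z}_{\geq 8}$, which is exactly why 1(c) is restricted to $\{1,2,3,4,5,7\}$.

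The second gap is in part 2, at the step you yourself flag as hardest. The interference between different $(r,C)$ is not resolved by the hypothesis on $\{1,2,3,4,5,7\}$ (for those $r$ one simply takes all residues, the easy case); the paper resolves it by attaching a cycle $C_{k_C}$ of a distinct length $k_C\in C$ for each prescribed class, together with auxiliary forbidden graphs forcing every sparse saturated graph to contain exactly one such cycle as an isolated induced component, which tags it with a unique $(r,C)$. Harder still, for $r\in\{6\}\cup\mathbb{Z}_{\geq 8}$ with partial $S_r$ you must rule out unintended saturated blueprints realising residues outside $S_r$. Taking the forbidden graphs to be $T_r+e$ and $2T_r+e$ (``near-trees'') does not achieve this; the paper instead uses double stars (and modified double stars for odd $r$) and takes $\mathcal{F}_r$ to be the minimal connected members of a monotone degree-condition family whose only subgraphs of $T_r$ are $T_r$ itself and the empty graph, so that every non-cycle component of a saturated blueprint is forced to be isomorphic to $T_r$. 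Without these two ingredients the construction does not close.
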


\newtheorem*{repth2}{Theorem \ref{quasfinfam}}

\begin{rem}
    The converse of part 2 (c) is also true - in part 1, if $\mathcal{F}$ is in fact finite, then there are only finitely many integers $r \geq 1$ with $\left(\bigcup_{C \in S_r} C \right) \setminus \bigcup_{q<r, C \in S_q} C \neq \emptyset$ - but this is a deeper result whose proof we postpone to a future paper. 
\end{rem}

For singleton families $\mathcal{F}=\{F\}$, Truszczynski and Tuza discovered that either $sat(n,F)= \left(1-1/r\right)n+o(n)$ for some integer $r \geq 1$ or $ sat(n,F) \geq n+o(n) $, thus verifying Tuza's Conjecture in the sparse regime $sat(n,F) \leq n+o(n)$ (see Theorem 2 in \cite{TrusTuza}). They also characterised the graphs $F$ with $sat(n,F)= \left(1-1/r\right)n+o(n)$ for each integer $r \geq 1$ (see Theorem 3 in \cite{TrusTuza}). This is often cited in the literature as the main progress towards settling Tuza's Conjecture in the positive direction. Unfortunately, the proof in \cite{TrusTuza} is flawed (see section \ref{sin} for more details). We give a correct proof using a novel construction, thus confirming both the result and the characterisation.

\begin{theorem}\label{sinfam}

For every graph $F$, either there is an integer $r \geq 1$ and an integer $k_C$ for each residue class $C$ modulo $r$ such that 

$$ sat(n,F)= \frac{(r-1)n+k_C}{r} $$
        
for every residue class $C$ modulo $r$ and large enough $n \in C$ or

$$ sat(n,F) \geq n+o(n) . $$ 

The former occurs if and only if there is a tree $T$ such that every graph $T+e$ obtained by adding a new edge $e$ to $T$ and every graph $2T+e$ obtained from the disjoint union $2T$ of two copies of $T$ by adding an edge $e$ between them contains a connected component of $F$ that is not contained in $T$ and $r$ is the minimum number of vertices in such a tree.

\end{theorem}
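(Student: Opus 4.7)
Define $r^{\ast}$ to be the minimum number of vertices in a tree $T$ satisfying the stated property (every $T+e$ and every $2T+e$ contains a connected component of $F$ not contained in $T$), with $r^{\ast} = \infty$ if no such tree exists. The plan is to show that the dichotomy of the theorem is governed by whether $r^{\ast}$ is finite, with $r = r^{\ast}$ in the sparse case. I would handle each case and then read off the explicit characterisation at the end.

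\emph{Sparse case} ($r^{\ast} = r < \infty$): Let $T$ be a witness tree on $r$ vertices. For the upper bound, I would construct, for each residue class $C$ modulo $r$, an $F$-saturated graph on $n$ vertices with $((r-1)n + k_C)/r$ edges. The base construction consists of $\lfloor n/r \rfloor$ disjoint copies of $T$ together with a carefully chosen ``tail'' on the remaining $n - r\lfloor n/r\rfloor$ vertices, whose edge count determines $k_C$. Verification of $F$-saturation proceeds in two steps: (a) $F$-freeness, since some component of $F$ is not a subgraph of $T$, so $F$ cannot be embedded in a disjoint union of copies of $T$; and (b) for any added edge $e$, the subgraph $T+e$ or $2T+e$ created by $e$ contains a ``bad'' component of $F$ not in $T$, while the remaining $\lfloor n/r\rfloor - O(1)$ untouched copies of $T$ host the other components of $F$, so $F \subseteq G+e$. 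The matching lower bound follows a K\'aszonyi--Tuza-style analysis: if $G$ is $F$-saturated, any tree component on fewer than $r$ vertices would admit an edge $e$ not creating $F$ (by the minimality of $r^{\ast}$), contradicting saturation. Hence all but $O(1)$ components have at least $r$ vertices and $r-1$ edges, and a per-residue-class refinement (analysing exactly which trees on $\{r,\ldots,2r-1\}$ vertices can serve as the ``tail'') yields the exact constant $((r-1)n + k_C)/r$.

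\emph{Dense case} ($r^{\ast} = \infty$): Suppose for contradiction that $sat(n,F) < n - \Omega(n)$ along some sequence, and let $G$ be the corresponding $F$-saturated graphs. Then $G$ has many tree components, and by pigeonhole some isomorphism type $T$ of bounded size appears many times. Since $T$ fails the condition, either $T+e$ or $2T+e$ contains no component of $F$ outside $T$; adding the corresponding edge inside or between copies of $T$ in $G$ then yields a graph that is still $F$-free, contradicting saturation. This forces $sat(n,F) \geq n + o(n)$. The explicit characterisation in the last sentence of the theorem follows immediately, since the sparse case occurs precisely when $r^{\ast} < \infty$, and then $r = r^{\ast}$ by minimality.

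\emph{Main obstacle.} The principal difficulty, and the source of the flaw in the original Truszczynski--Tuza proof, is step (b) in the upper bound: the fact that some component of $F$ appears in $T+e$ does not by itself guarantee that \emph{all} components of $F$, with the correct multiplicities, can be simultaneously embedded into $G+e$, because a given component of $F$ might only embed into the new thickened copy of $T$ in incompatible ways. Overcoming this is where the novel construction alluded to in the abstract enters: the ``tail'' on the residual vertices must be designed not just to achieve the correct edge count for $k_C$, but also to absorb any leftover components of $F$ that the core $mT$ fails to accommodate, while preserving $F$-freeness.
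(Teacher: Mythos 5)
Your overall architecture matches the paper's: reduce the theorem to showing that, for a minimal witness tree $T$ on $r=r^{\ast}$ vertices, every residue class modulo $r$ is realised by an $F$-saturated graph of the form $H\cup kT$ with a bounded ``tail'' $H$, and prove the complementary lower bound by showing that tree components of any non-witness isomorphism type occur boundedly often. Your dense-case and lower-bound sketches correspond to the paper's Claims 1 and 2. One recoverable caveat there: when you add an edge $e$ inside or between copies of $T$ and no new component of $F$ outside $T$ is created, you must still rule out that $G+e$ contains $F$ via components of $F$ that \emph{are} contained in $T$ and are hosted by the other copies of $T$; the paper handles this by taking at least $\max\{k_0,2\}$ copies, where $k_0$ is the number of components of $F$ contained in $T$.

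The genuine gap is exactly the point you yourself flag as the ``main obstacle'': the existence, for each residue class $C$ modulo $r$, of a tail $H$ with $|H|\in C$ such that $(H,T)$ is an $F$-saturated blueprint. You describe what the tail must do (``absorb any leftover components of $F$'') but give no construction, and this is the entire content of the paper's key Lemma~\ref{lem:sinfam} --- precisely where the Truszczynski--Tuza proof breaks, since their tails (disjoint unions of cliques) provably cannot work for every residue class, as Example~\ref{counterexample} shows. The paper's construction is: first reduce to the case where no component of $F$ embeds in $T$; handle separately the easier case where some component of $F$ is a star; otherwise take $H^{\ast}=A\cup B$ with $|A|=\sigma(F)-1$ (where $\sigma$ denotes vertex cover number), $|B|=m$ chosen to hit the right residue, and all edges incident to $A$ present, and then add edges greedily to reach an $F$-saturated $H$. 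The reason this works is that when an edge $xy$ is added between $H$ and a copy of $T$, the component $F_0$ of $F$ created in the corresponding $2T+e$ splits across that edge into pieces $C_1,C_2$, and the vertex-cover structure of $H$ guarantees that one piece --- chosen according to how a minimum cover of $F_0$ meets the two sides --- embeds into $H$ rooted at $x$ while the other sits in the copy of $T$ rooted at $y$, with a further delicate subcase when $x\in B$ and the cover counts are tight (requiring a re-rooting through a leaf and its neighbour). Without this, or some equivalent, construction and its case analysis, the sparse-case upper bound, and hence the theorem, is not established.
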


\newtheorem*{repth3}{Theorem \ref{sinfam}}

\begin{rem}
    It easily follows from the characterisation that for every integer $r \geq 1$ there exists a graph $F$ with $sat(n,F)=\left(1-1/r\right)n+O(1)$. Indeed, let $T$ be any tree with $r$ vertices and $F$ be the disjoint union of all graphs $T+e$ obtained by adding a new edge $e$ to $T$ and all graphs $2T+e$ obtained from $2T$ by adding an edge $e$ between the two copies of $T$.
\end{rem}

\begin{rem}
    It follows from the characterisation that for every integer $r \geq 1$ there is a finite collection of graphs such that for every  graph $F$, whether or not $sat(n,F)=\left(1-1/r\right)n+O(1)$ depends only on which graphs in the collection are connected components of $F$. For every integer $r \geq 1$ one can compute this finite collection and dependence and hence give an explicit description of the graphs $F$ with $sat(n,F)=\left(1-1/r\right)n+O(1)$, but the complexity of this description increases with $r$ (see Corollary 4 in \cite{TrusTuza}).
\end{rem}

The rest of the paper is organised as follows. We first prove Theorem \ref{arbfam} and deduce Corollaries 1 through 3 in section \ref{arb}. We then prove Theorem \ref{quasfinfam} in section \ref{fin}. Finally, we discuss the error in \cite{TrusTuza} and prove Theorem \ref{sinfam} in section \ref{sin}.

\section{Arbitrary families}\label{arb}

In this section we prove Theorem \ref{arbfam} and deduce Corollaries 1 through 3. To prove Theorem \ref{arbfam}, we will need two lemmas. The first is a fact about set differences of unions of intervals that we will need in the proof of part 1 (b) of Theorem \ref{arbfam}. Before stating and proving the lemma, we define intervals and introduce some notation. \newline

For our purposes, an \emph{interval} is a set of integers $I$ with the property that if $a \leq b \leq c$ are integers with $a,c \in I$ then $b \in I$. Equivalently, an interval is a set of integers that is either empty or of the form $[a,b]$ for some $a \in \{-\infty\} \cup \mathbb{Z}$ and $b \in \mathbb{Z} \cup \{\infty\}$ with $a \leq b$, where $[a,b]=\{c \in \mathbb{Z} : a \leq c \leq b\}$ and $-\infty \leq c \leq \infty$ for all $c \in \mathbb{Z}$.

\begin{lemma}\label{intervalsetdiff}
    Let $A$ be the union of $a$ intervals and $B$ be the union of $b$ intervals. Then $A \setminus B$ is the union of at most $a+b$ intervals.
\end{lemma}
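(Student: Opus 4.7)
My plan is to prove Lemma \ref{intervalsetdiff} by induction on $b$, the number of intervals in $B$.

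The base case $b=0$ gives $A \setminus B = A$, which is a union of $a = a+0$ intervals by hypothesis. For the inductive step, I would write $B = J_1 \cup \cdots \cup J_b$, set $B' = J_1 \cup \cdots \cup J_{b-1}$, and apply the inductive hypothesis to conclude that $A \setminus B'$ is a union of at most $a + b - 1$ intervals. Since $A \setminus B = (A \setminus B') \setminus J_b$, the statement will follow once I establish the following sublemma: \emph{if $C$ is a union of $k$ intervals and $J$ is a single interval, then $C \setminus J$ is a union of at most $k+1$ intervals.}

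To prove the sublemma, I would first reduce to the case where the $k$ intervals $I_1, \dots, I_k$ making up $C$ are pairwise disjoint (by replacing any union of $k$ intervals with its maximal-interval decomposition, which uses at most $k$ intervals). Then $C \setminus J = \bigcup_{i=1}^k (I_i \setminus J)$. For each $i$, $I_i \setminus J$ is a union of at most $2$ intervals (the portion of $I_i$ strictly to the left of $J$ and the portion strictly to the right). The key observation is that $I_i \setminus J$ consists of exactly $2$ nonempty intervals only when $J \subsetneq I_i$ with both endpoints of $J$ strictly interior to $I_i$; since the $I_i$ are pairwise disjoint, this can happen for at most one index $i$. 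All remaining indices contribute at most $1$ interval each, yielding at most $2 + (k-1) = k + 1$ intervals in total.

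The proof involves no real obstacle; the one place that warrants a moment's care is the treatment of unbounded intervals (those with $-\infty$ or $+\infty$ as an endpoint), but the disjointness argument above goes through unchanged since the definition of interval encompasses these cases and strict containment $J \subsetneq I_i$ with both endpoints of $J$ interior to $I_i$ is still what determines whether we produce two pieces. A brief alternative worth mentioning, which I might include as a remark, is the transition-counting proof: the indicator function $\mathbf{1}_{A \setminus B}$ can switch value between consecutive integers only where $\mathbf{1}_A$ or $\mathbf{1}_B$ switches, giving at most $2a + 2b$ transitions and hence at most $a+b$ maximal intervals.
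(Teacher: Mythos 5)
Your main argument is correct and essentially identical to the paper's proof: induction on $b$ via the sublemma that removing a single interval from a union of $k$ intervals leaves a union of at most $k+1$ intervals, proved by reducing to pairwise disjoint intervals and observing that at most one of them can be split into two pieces. The only caveat concerns your closing aside: knowing that $\mathbf{1}_{A\setminus B}$ has at most $2a+2b$ transitions only bounds the number of maximal intervals by $a+b+1$ in general (a maximal interval unbounded on one side contributes just one transition), so that alternative would need a finer count of, say, left endpoints to recover $a+b$; since it is offered only as a remark, this does not affect the validity of your proof.
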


\begin{proof}

We show that if $C$ is the union of at most $c$ intervals and $D$ is an interval then $C \setminus D$ is the union of at most $c+1$ intervals, from which the lemma follows by induction on $b$. We may assume $D$ is non-empty. Write $C=\bigcup_{i=1}^d C_i$, where the $C_i$ are intervals and $d \leq c$ is minimal. Then, since the union of two intersecting intervals is itself an interval, the $C_i$ are disjoint, as otherwise $C$ would be the union of fewer than $d$ intervals. \newline

It is easy to see that, for every integer $1 \leq i \leq d$, if $D\not\subseteq C_i$ then $C_i \setminus D$ is an interval. Furthermore, if $D\subseteq C_i$ then $C_i \setminus D$ is the union of at most two intervals. Since the $C_i$ are disjoint and $D$ is non-empty, there is at most one integer $1 \leq i \leq d$ such that $D\subseteq C_i$. It follows that $C \setminus D=\bigcup_{i=1}^d \left( C_i\setminus D \right)$ is the union of at most $d+1\le c+1$ intervals, as required.

\end{proof}

We say a family of graphs is an \emph{antichain} if no graph in the family contains another. The second lemma asserts the existence of an antichain containing many graphs with $m$ edges and large minimum degree for every large enough integer $m$. We will need such an antichain to prove part 2 of Theorem \ref{arbfam}.

\begin{lemma}\label{antichain}

    There exists an antichain $\mathcal{A}$ of graphs (up to isomorphism) such that the following hold, where $\mathcal{A}_m$ is the family of graphs in $\mathcal{A}$ with $m$ edges for each integer $m \geq 0$.

\begin{enumerate}
    \item $|\mathcal{A}_m| = \exp\left[m \log m + O(m \log \log m)\right]. $
    \item $|G| = m / \log m + O(1)$ for every $G \in \mathcal{A}_m $.
    \item $0 < \delta(G) = \Omega(\log m) $ for every $G \in \mathcal{A}_m$.
\end{enumerate}
    
\end{lemma}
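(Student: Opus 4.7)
The plan is to construct $\mathcal{A} = \bigcup_{m} \mathcal{A}_m$ where each $\mathcal{A}_m$ is a large collection of pairwise non-isomorphic graphs on $n_m = \lceil m/\log m \rceil$ vertices with exactly $m$ edges and minimum degree at least $c \log m$ for a suitable small constant $c$. The within-layer antichain condition is then automatic, since any two graphs with identical vertex count and edge count are either isomorphic or incomparable in the subgraph order.

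The counting step is routine. The number of labeled graphs on $n_m$ vertices with exactly $m$ edges is $\binom{\binom{n_m}{2}}{m}$, whose logarithm equals $m \log m + O(m \log \log m)$ after substituting $n_m \approx m/\log m$. A standard first-moment or Chernoff estimate shows that the minimum degree is at least $c \log m$ in all but a vanishing fraction of such graphs: the average degree is $2m/n_m \approx 2\log m$, and the probability that a fixed vertex has degree below $c \log m$ is inverse-polynomial in $m$, which comfortably beats the union bound over $n_m$ vertices. Dividing by $n_m!$ (whose logarithm is $m + O(m/\log m)$ by Stirling) to pass to unlabeled graphs preserves the leading asymptotics, giving $|\mathcal{A}_m| = \exp[m \log m + O(m \log \log m)]$, and the vertex-count estimate $|G| = m/\log m + O(1)$ is built into the construction.

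The main obstacle is the between-layer antichain condition: for $m_1 < m_2$, no graph in $\mathcal{A}_{m_1}$ may occur as a subgraph of any in $\mathcal{A}_{m_2}$. A naive degree-sum argument is too weak, because $n_m \approx m/\log m$ forces the minimum degree to satisfy $\delta(G) \leq 2\log m$, leaving no slack to rule out embeddings. I would therefore impose an additional structural invariant that is cheap in counting but effective as an obstruction to embedding; a natural candidate is girth. Restricting $\mathcal{A}_m$ to graphs of girth exactly $g_m$ for a slowly increasing sequence $g_m$ ensures that whenever $g_{m_1} < g_{m_2}$, a shortest cycle of any $G \in \mathcal{A}_{m_1}$ cannot embed into any $H \in \mathcal{A}_{m_2}$, and containment in the other direction is ruled out by $|E(G)| < |E(H)|$. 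Since the Moore bound caps usable girth at $O(\log m/\log \log m)$, many $m$'s must share the same $g_m$; within each such girth band I would use a probabilistic pruning, bounding the total count of pairs $(G,H)$ with $G \subseteq H$ by $|\mathcal{A}_{m_2}| \binom{n_{m_2}}{n_{m_1}} \binom{m_2}{m_1}$, and removing at most a sub-exponential fraction of graphs to restore the antichain property without losing the leading asymptotic.

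I expect the hardest step to be making this construction rigorous: the tight budget on $\delta(G)$, the bound on usable girth, and the requirement of $\exp[m \log m + O(m \log \log m)]$ graphs per layer leave little slack. The final proof will likely need to combine a carefully tuned structural invariant with a counting/pruning argument inside each invariant class to simultaneously control the antichain property and the leading-order count of $|\mathcal{A}_m|$.
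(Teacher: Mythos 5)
Your setup (graphs on $\lceil m/\log m\rceil$ vertices with $m$ edges and minimum degree $\Omega(\log m)$, counted up to isomorphism) matches the paper's, and the within-layer counting and minimum-degree estimates are fine. The gap is in the between-layer antichain condition, which is the entire content of the lemma. The girth invariant buys you almost nothing: since usable girth is capped at $O(\log m/\log\log m)$, every girth band contains arbitrarily long runs of consecutive values of $m$, so for most pairs $m_1<m_2$ you are back to the unassisted containment problem. And the pruning you propose for that case does not work as stated. Your bound on the number of bad pairs, $|\mathcal{A}_{m_2}|\binom{n_{m_2}}{n_{m_1}}\binom{m_2}{m_1}$, is of the form (number of $H$) times (number of vertex/edge subsets of $H$), which already exceeds $|\mathcal{A}_{m_2}|$ by a factor $\exp(\Theta(m_2))$; deleting one graph per bad pair could therefore wipe out the whole layer, and the bound gives no control on how many \emph{distinct} $H$ contain some lower-level $G$. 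What is needed instead is a first-moment computation over copies of each \emph{specific} lower-level graph $G$: the number of potential copies of $G$ in $K_{n_{m_2}}$ times the probability that a fixed copy appears in a uniformly random graph with $m_2$ edges, summed over $G\in\mathcal{A}_{m_1}$ and over $m_1<m_2$.

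Carrying out that computation reveals a second missing idea. When $m_1$ is close to $m_2$, the expected number of copies of a single $G\in\mathcal{A}_{m_1}$ is roughly $\exp\left[-m_1\log m_2+m_1\log m_2/\log m_1+O(m_1\log\log m_2)\right]$, so multiplying by $|\mathcal{A}_{m_1}|=\exp\left[m_1\log m_1+O(m_1\log\log m_1)\right]$ leaves a factor $\exp\left[O(m_1\log\log m_1)\right]$ that the union bound cannot absorb if the lower layers are taken at full size. The paper's resolution is a recursive construction in which each layer is deliberately trimmed to size $\lfloor\exp(m\log m-Cm\log\log m)\rfloor$ for a large constant $C$; this sacrifice of a factor $\exp(\Theta(m\log\log m))$ -- exactly the slack permitted by the error term in the statement -- is what makes the first moment $o(1)$ and lets one take $\mathcal{A}_{m}$ to be a subfamily of the typical graphs in $\mathcal{B}_m$ containing no graph from any earlier layer. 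Your proposal contains neither the copy-by-copy first-moment bound nor the trimming of the lower layers, so the antichain property is not established.
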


 The proof is a non-constructive counting argument. In the proof (and at one point in the proof of Theorem \ref{arbfam}) we will need to estimate certain binomial coefficients. All estimates can be obtained by writing $\binom{a}{b}=a!/b!(a-b)!$ and then applying Stirling's formula $N!=\exp\left[N \log N - N + \log N / 2 + O(1) \right]$. The calculations involved are lengthy but straightforward, so we omit them.

\begin{proof}
    Let $C$ be a constant, large enough for the proof to go through. Crucially, all implied constants in the proof are independent of $C$. We recursively obtain families $\mathcal{A}_m$ of graphs up to isomorphism for each integer $m \geq C$ such that the following hold.

    \begin{enumerate}[(a)]
        \item $|\mathcal{A}_m| = \left\lfloor \exp\left(m \log m - C m \log \log m\right) \right\rfloor $.
        \item  Every graph $G \in \mathcal{A}_m $ has the following properties.
           \begin{enumerate}[(i)]
                \item $|G| = \lfloor m / \log m  \rfloor $.
                \item $e(G)=m $.
                \item $\delta(G) \geq \log m / C$.
                \item $G \not \supseteq H$ for every integer $C \leq \ell < m$ and $H \in \mathcal{A}_\ell$. 
        \end{enumerate}
    \end{enumerate}
    
    Then $\mathcal{A}=\bigcup_{m \geq C} \ \mathcal{A}_m$ is the desired antichain. Suppose $m \geq C$ is an integer and we have obtained $\mathcal{A}_\ell$ for every integer $C \leq \ell < m$. We then obtain $\mathcal{A}_m$ as follows. Let $n=\lfloor m / \log m  \rfloor$ and $\mathcal{B}_m$ be the family of all graphs on $n$ labeled vertices with $m$ edges. \newline
    
    \emph{Claim 1:} The proportion of graphs in $\mathcal{B}_m$ with a vertex of degree less than $\log m / C$ is $o(1)$. \newline

    \emph{Proof:} For every vertex $v$ and integer $0 \leq d = o(\log m)$, the proportion of graphs in $\mathcal{B}_m$ with $d(v)=d$ is 

    $$\frac{\displaystyle \binom{n-1}{d}\binom{\binom{n-1}{2}}{m-d}}{\displaystyle \binom{\binom{n}{2}}{m}} = \frac{1}{ m^{ 2+o(1)}} \ . $$

    Hence, for every vertex $v$ and integer $0 \leq d < \log m / C$, the proportion of graphs in $\mathcal{B}_m$ with $d(v)=d$ is at most $1 / m^{3/2}$, provided $C$ is large enough. There are $n=O(m/ \log m)$ vertices $v$ and $O(\log m)$ integers $0 \leq d < \log m / C$, so by a union bound, the proportion of graphs in $\mathcal{B}_m$ with a vertex of degree less than $\log m / C$ is $O(1 / m^{1/2})$, which is $o(1)$.\qedclaim\newline

    \emph{Claim 2:} The proportion of graphs in $\mathcal{B}_m$ containing a graph in $\mathcal{A}_\ell$ for some integer $C \leq \ell < m$ is $o(1)$. \newline

    \emph{Proof:} Since there are at most $m$ integers $C \leq \ell < m$, by a union bound, it is sufficient to show that for every integer $C \leq \ell < m$, the proportion of graphs in $\mathcal{B}_m$ containing a graph in $\mathcal{A}_\ell$ is $o(1/m)$. Let $C \leq \ell < m$ be an integer. For every graph $H \in \mathcal{A}_\ell$, since $|H|=\lfloor \ell / \log \ell \rfloor \leq \ell / \log \ell$, the number of copies of $H$ on $n$ labeled vertices is at most $n^{|H|} \leq \exp\left(\ell \log m / \log \ell\right)$ and, since $e(H)=\ell$, the proportion of graphs in $\mathcal{B}_m$ containing a given copy of $H$ is

    $$\frac{\displaystyle \binom{\binom{n}{2}-\ell}{m-\ell}}{\displaystyle \binom{\binom{n}{2}}{m}} = \exp\left[-\ell \log m + O(\ell \log \log m)\right] . $$

    Since $|\mathcal{A}_\ell| \leq \exp\left(\ell \log \ell - C \ell \log \log \ell \right)$, by a union bound, the proportion of graphs in $\mathcal{B}_m$ containing a graph in $\mathcal{A}_l$ is at most

    $$ \exp\left[\ell \log \ell - \ell \log m - C \ell \log \log \ell + O(\ell \log \log m) + \ell \log m / \log \ell  \right]  .$$
    
    Suppose first that $ \ell \leq m^{1/4}$. Then $ \ell \log \ell \leq \ell \log m / 4$, $- C \ell \log \log \ell \leq 0$, $ O( \ell \log \log m) \leq \ell \log m / 4 $ and $ \ell \log m / \log \ell \leq \ell \log m / 4$, provided $C$ is large enough, since $m > \ell \geq C$. Hence the proportion of graphs in $\mathcal{B}_m$ containing a graph in $\mathcal{A}_{ \ell }$ is at most $\exp(- \ell \log m /4) \leq 1/m^2$, which is $o(1/m)$, provided $C$ is large enough, since $ \ell \geq C$. \newline
    
    Now suppose $\ell>m^{1/4}$. Then $ \ell \log \ell - \ell \log m \leq 0$, $- C \ell \log \log \ell \leq -2 C \ell \log \log m /3 $ and $O( \ell \log \log m) + \ell \log m / \log \ell = O( \ell \log \log m) \leq C \ell \log \log m /3$, provided $C$ is large enough, since $m \geq C$. Hence the proportion of graphs in $\mathcal{B}_m$ containing a graph in $\mathcal{A}_\ell$ is at most $\exp(-C \ell \log \log m/3) \leq \exp(-m^{1/4} \log \log m)$, which is $o(1/m)$, provided $C$ is large enough. \qedclaim\newline

    Every graph in $\mathcal{B}_m$ has properties (i) and (ii) and by Claims 1 and 2, $\left(1+o(1)\right)|\mathcal{B}_m|$ of the graphs in $\mathcal{B}_m$ also have properties (iii) and (iv). Since every graph in $\mathcal{B}_m$ is isomorphic to at most $n!$ graphs in $\mathcal{B}_m$, it follows that the number of graphs up to isomorphism with properties (i) through (iv) is at least

    $$\frac{1+o(1)}{n!} \ |\mathcal{B}_m|=\frac{1+o(1)}{n!} \ \binom{\binom{n}{2}}{m}=\exp\left[m \log m + O(m \log \log m) \right] , $$

    which is at least $\left\lfloor \exp\left(m \log m - C m \log \log m\right) \right\rfloor $, provided $C$ is large enough. Hence, we obtain a family $\mathcal{A}_m$ of graphs up to isomorphism satisfying (a) and (b).
    
\end{proof}

We are now ready to prove Theorem \ref{arbfam}.

\begin{repth1}
There exist sequences $(a_m)_{m \geq 0}$ and $(b_m)_{m \geq 0}$ of non-negative integers with $ \exp\left[m \log m + O(m)\right] \geq a_m \geq b_m \geq \exp\left[m \log m + O(m \log \log m)\right]$ such that the following hold.

    \begin{enumerate}    
        \item For every family $\mathcal{F}$ of graphs, the following hold.
        \begin{enumerate}
            \item $sat(n,\mathcal{F})=O(n)$.
            \item For every integer $m \geq 0$, the set $\{n \in \mathbb{Z}_{\geq 0} : sat(n,\mathcal{F})=m\}$ is the union of at most $a_m$ intervals.
            \item $sat(n,\mathcal{F})$ either tends to infinity or is eventually constant.
        \end{enumerate}
        \item Let $(s_n)_{n \geq 0}$ be a sequence of integers such that the following hold.
        \begin{enumerate}
            \item $s_n=O(n)$.
            \item For every large enough integer $m$, the set $\{n \in \mathbb{Z}_{\geq 0} : s_n=m\}$ is the union of at most $b_m$ intervals.
            \item  $s_n \to \infty$ as $n \to \infty$.
        \end{enumerate}
        Then there exists a family $\mathcal{F}$ of graphs such that $sat(n,\mathcal{F})=s_n$ for large enough $n$.
        \item For every integer $m \geq 0$, there exists a finite family $\mathcal{F}$ of graphs such that  $sat(n,\mathcal{F})=m$ for large enough~$n$.
    \end{enumerate}
\end{repth1}

\begin{proof}

    We first define the sequences $(a_m)_{m \geq 0}$ and $(b_m)_{m \geq 0}$ and prove the inequalities $ \exp\left[m \log m + O(m)\right] \geq a_m \geq b_m \geq \exp\left[m \log m + O(m \log \log m)\right]$. For each integer $m \geq 0$, let $\mathcal{G}_m$ be the family of graphs (up to isomorphism) with $m$ edges and no isolated vertices and let $a_m = 2 \sum_{\ell=0}^m |\mathcal{G}_\ell| $ (note that the families $\mathcal{G}_m$ are finite, since every graph in $\mathcal{G}_m$ has at most $2m$ vertices). By Lemma \ref{antichain}, there is an antichain $\mathcal{A}$ of graphs (up to isomorphism) with the properties stated in the lemma. For each integer $m \geq 0$, let $b_m=|\mathcal{A}_m|$, where $\mathcal{A}_m$ is the family of graphs in $\mathcal{A}$ with $m$ edges, as before. Then $b_m \geq \exp\left[m \log m + O(m \log \log m)\right]$ by part 1 of Lemma \ref{antichain} and by part 3 of Lemma \ref{antichain}, every graph in $\mathcal{A}_m$ has no isolated vertices, so $a_m \geq |\mathcal{G}_m| \geq b_m$. \newline

    Since every graph in $\mathcal{G}_m$ has at most $2m$ vertices, we can associate to it the graph on $2m$ vertices obtained by adding isolated vertices. Hence $|\mathcal{G}_m|$ is the number of graphs (up to isomorphism) with $2m$ vertices and $m$ edges, which is at most the number of graphs on $2m$ labeled vertices with $m$ edges, which is 

    $$ \binom{\binom{2m}{2}}{m} = \exp\left[m \log m + O(m)\right] . $$

    So

    $$a_m = 2 \sum_{l=0}^m |\mathcal{G}_l| \leq 2 \sum_{\ell=0}^m \exp\left[\ell \log \ell + O(\ell)\right]  = \exp\left[m \log m + O(m)\right] .  $$

    We now prove part 1. Let $\mathcal{F}$ be a family of graphs. Part (a) was proved by K\'aszonyi and Tuza (see Theorem 1 in \cite{KT}). We now prove part (b). For each graph $H$ with no isolated vertices, let $H_k$ be the graph obtained by adding $k$ isolated vertices to $H$, for each integer $k \geq 0$, and let $I_H=\{k \in \mathbb{Z}_{\geq 0} : \text{$H_k$ is $\mathcal{F}$-saturated} \}$. \newline

\begin{figure}[H]
\centering
\begin{tikzpicture}

    \draw (3,-4) ellipse (0.75 and 0.75);
    \node[anchor=center] at (3,-4) {$H$};
    \filldraw[black] (5,-4) circle (0.75pt);
    \filldraw[black] (4.875+0.5,0.28125-4) circle (0.75pt); 
    \filldraw[black] (4.125+0.5,0.28125-4) circle (0.75pt);
    \filldraw[black] (4.125+0.5,-4.28125) circle (0.75pt);
    \filldraw[black] (4.875+0.5,-4.28125) circle (0.75pt);
    \draw [decorate,decoration={brace,amplitude=5pt,raise=5pt}] (5+0.5,-4.28125) -- (5-0.5,-4.28125) node[midway,below=15pt] {$k$};

\end{tikzpicture}
\caption{The graph $H_k$.}\label{fig:H_k}
\end{figure}
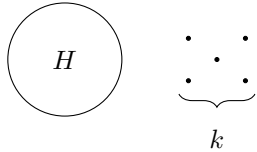

    \emph{Claim 1:} For every graph $H$ with no isolated vertices, $I_H$ is the union of two intervals. \newline
    
    \emph{Proof:} We in fact show that $I_H \cap \mathbb{Z}_{\geq 2}$ is an interval, from which the claim follows, since $I_H \cap \{0,1\}$ is an interval (since every subset of $\{0,1\}$ is an interval). Let $2 \leq k \leq \ell \leq m$ be integers such that $H_k$ and $H_m$ are $\mathcal{F}$-saturated. We show that $H_\ell$ is $\mathcal{F}$-saturated and hence that $I_H \cap \mathbb{Z}_{\geq 2}$ is an interval. Since $H_\ell \subseteq H_m$ and $H_m$ is $\mathcal{F}$-free, so is $H_\ell$. It remains to show that every graph $H_\ell+vw$ obtained by adding a new edge $vw$ to $H_\ell$ contains a graph in $\mathcal{F}$. \newline
    
    Suppose first that $v,w \in H$. Then, since $H_k + vw \subseteq H_\ell + vw$ and $H_k + vw$ contains a graph in $\mathcal{F}$, so does $H_\ell + vw$. Now suppose $v \in H$ and $w \not \in H$. Let $w' \in H_k$ be one of the isolated vertices (note $k \geq 1$). Then, since $H_k + vw' \subseteq H_l + vw$ and $H_k + vw'$ contains a graph in $\mathcal{F}$, so does $H_l + vw$. Finally, suppose $v,w \not \in H$. Let $v',w' \in H_k$ be distinct isolated vertices (note $k \geq 2$). Then, since $H_k + v'w' \subseteq H_\ell + vw$ and $H_k + v'w'$ contains a graph in $\mathcal{F}$, so does $H_\ell + vw$.\qedclaim\newline

    For each integer $m \geq 0$, let $S_m$ be the set of integers $n \geq 0$ for which there exists an 
    $\mathcal{F}$-saturated graph $G$ with $n$ vertices and $m$ edges. Note that every graph $G$ is of the form $H_k$ for some graph $H$ with no isolated vertices and integer $k \geq 0$. Hence, $S_m = \bigcup_{H \in \mathcal{G}_m} I_H + |H|$, which is the union of $2|\mathcal{G}_m|$ intervals by Claim~1 (where $S+n=\{s+n : s \in S\}$ for every set $S$ of integers and integer $n$). So, for every integer $m \geq 0$, $\{n \in \mathbb{Z}_{\geq 0} : sat(n,\mathcal{F})=m\} = S_m \setminus \bigcup_{\ell<m} S_\ell$ is the union of at most $a_m$ intervals  by Lemma \ref{intervalsetdiff}, proving part (b). \newline 

    Part (c) in fact follows from part (b). Indeed, by part (b), for every integer $m \geq 0$, the set $\{n \in \mathbb{Z}_{\geq 0} : sat(n,\mathcal{F})=m\}$ is a finite union of intervals. If for all $m$ all of these intervals are finite, the sets $\{n \in \mathbb{Z}_{\geq 0} : sat(n,\mathcal{F})=m\}$ are all finite, which is equivalent to $sat(n,\mathcal{F})$ tending to infinity. On the other hand, if for some $m$ one of these intervals is infinite, $sat(n,\mathcal{F})$ is eventually constant. \newline

    Next, we prove part 2. Let $(s_n)_{n \geq 0}$ be a sequence of integers satisfying the conditions in part 2. By condition~(a), there is a constant $C_1>0$ such that $s_n \leq C_1(n+1)$ for every integer $n \geq 0$. By condition~(b) and parts 2 and 3 of Lemma \ref{antichain}, there is a constant $C_2$ such that for every integer $m \geq C_2$, the set $\{n \in \mathbb{Z}_{\geq 0} : s_n=m\}$ is the union of at most $|\mathcal{A}_m|$ intervals and for every graph $G \in  \mathcal{A}_m$, $m/C_1 - 1 \geq |G|+2 $ and $\delta(G) \geq 2C_1+3$. \newline
    
    For each integer $m \geq C_2$, let $\mathcal{I}_m$ be a collection of intervals with $\{n \in \mathbb{Z}_{\geq 0} : s_n=m\} = \bigcup_{I \in \mathcal{I}_m} I$ and  $|\mathcal{I}_m| \leq |\mathcal{A}_m|$. We may assume that every $I \in \mathcal{I}_m$ is non-empty and it follows from condition (c) that every $I \in \mathcal{I}_m$ is finite, so every interval $I \in \mathcal{I}_m$ is of the form $[a_I,b_I]$ for some integers $0 \leq a_I \leq b_I$. Since $|\mathcal{I}_m| \leq |\mathcal{A}_m|$, we can pick a different graph $H_I \in \mathcal{A}_m$ for each $I \in \mathcal{I}_m$. For each integer $m \geq C_2$ and interval $I \in \mathcal{I}_m$, let $\mathcal{F}_I$ be the family containing the following graphs. 

    \begin{enumerate}
        \item Every graph on $a_I$ vertices obtained from $H_I$ by first adding a new edge and then isolated vertices.
        \item Every graph on $a_I$ vertices obtained from $H_I$ by first adding a new vertex, adjacent to a unique vertex in $H_I$, and then isolated vertices.
        \item The graph on $a_I$ vertices obtained from $H_I$ by adding an isolated edge and isolated vertices.
        \item The graph on $b_I+1$ vertices obtained from $H_I$ by adding isolated vertices.
    \end{enumerate}

\begin{figure}[H]
\centering
\begin{tikzpicture}

    \draw (-2.5,0) ellipse (0.75 and 0.75);
    \node[anchor=center] at (-2.5,0) {$H_I$};
    \filldraw[black] (-0.375-2.5,0.4871) circle (0.75pt);
    \filldraw[black] (0.375-2.5,0.1624) circle (0.75pt);
    \draw  (-0.375-2.5,0.4871) -- (0.375-2.5,0.1624);
    \filldraw[black] (1.5-2,0) circle (0.75pt);
    \filldraw[black] (1.875-2,0.28125) circle (0.75pt); 
    \filldraw[black] (1.125-2,0.28125) circle (0.75pt);
    \filldraw[black] (1.125-2,-0.28125) circle (0.75pt);
    \filldraw[black] (1.875-2,-0.28125) circle (0.75pt);
    \draw [decorate,decoration={brace,amplitude=5pt,raise=5pt}] (1.5-2+0.5,-0.28125) -- (1.5-2-0.5,-0.28125) node[midway,below=10pt] {$a_I-|H_I|$};
    \node[anchor=north] at (-1.5,-1.5) {Type 1};

    \draw (3,0) ellipse (0.75 and 0.75);
    \node[anchor=center] at (3,0) {$H_I$};
    \filldraw[black] (3+0.28125,-0.365325) circle (0.75pt);
    \filldraw[black] (3+1.125,-0.1624) circle (0.75pt);
    \draw (3+0.28125,-0.365325) -- (3+1.125,-0.1624);
    \filldraw[black] (5.25,0) circle (0.75pt);
    \filldraw[black] (3.75+1.875,0.28125) circle (0.75pt); 
    \filldraw[black] (3.75+1.125,0.28125) circle (0.75pt);
    \filldraw[black] (3.75+1.125,-0.28125) circle (0.75pt);
    \filldraw[black] (3.75+1.875,-0.28125) circle (0.75pt);
    \draw [decorate,decoration={brace,amplitude=5pt,raise=5pt}] (5.75,-0.28125) -- (4.75,-0.28125) node[midway,below=10pt] {$a_I-|H_I|-1$};
    \node[anchor=north] at (4,-1.5) {Type 2};

    \draw (-3,-4) ellipse (0.75 and 0.75);
    \node[anchor=center] at (-3,-4) {$H_I$};
    \filldraw[black] (-1.5,0.5625-4) circle (0.75pt);
    \filldraw[black] (-1.5,-0.5625-4) circle (0.75pt);
    \draw (-1.5,0.5625-4) -- (-1.5,-0.5625-4);
    \filldraw[black] (0,-4) circle (0.75pt);
    \filldraw[black] (1.875-1.5,0.28125-4) circle (0.75pt); 
    \filldraw[black] (1.125-1.5,0.28125-4) circle (0.75pt);
    \filldraw[black] (1.125-1.5,-0.28125-4) circle (0.75pt);
    \filldraw[black] (1.875-1.5,-0.28125-4) circle (0.75pt);
    \draw [decorate,decoration={brace,amplitude=5pt,raise=5pt}] (0.5,-0.28125-4) -- (-0.5,-0.28125-4) node[midway,below=10pt] {$a_I-|H_I|-2$};
    \node[anchor=north] at (-1.5,-5.5) {Type 3};

     \draw (3,-4) ellipse (0.75 and 0.75);
    \node[anchor=center] at (3,-4) {$H_I$};
    \filldraw[black] (5,-4) circle (0.75pt);
    \filldraw[black] (4.875+0.5,0.28125-4) circle (0.75pt); 
    \filldraw[black] (4.125+0.5,0.28125-4) circle (0.75pt);
    \filldraw[black] (4.125+0.5,-4.28125) circle (0.75pt);
    \filldraw[black] (4.875+0.5,-4.28125) circle (0.75pt);
    \draw [decorate,decoration={brace,amplitude=5pt,raise=5pt}] (5.5,-4.28125) -- (4.5,-4.28125)node[midway,below=10pt] {$b_I-|H_I|+1$};
    \node[anchor=north] at (4,-5.5) {Type 4};

\end{tikzpicture}
\caption{The four types of graphs in $\mathcal{F}_I$.}\label{fig:4types}
\end{figure}
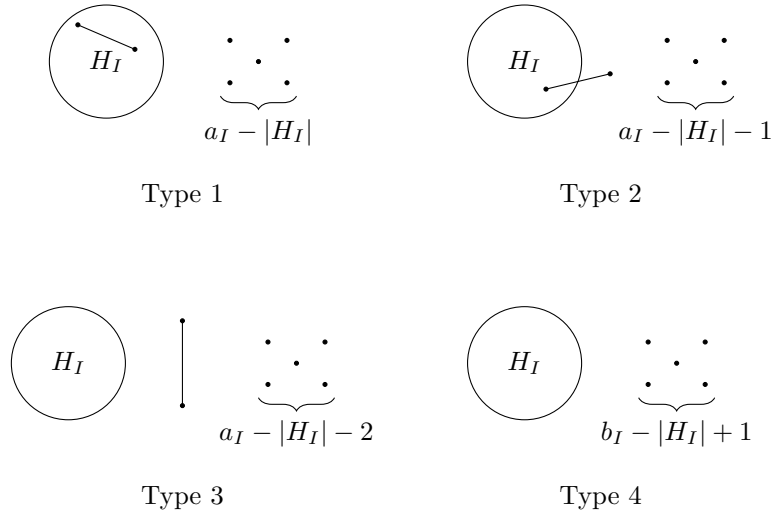

Note that $C_1 (a_I +1) \geq s_{a_I} = m $, so $a_I \geq m/C_1 -1 \geq |H_I| + 2$. Let $\mathcal{F}=\bigcup_{\substack{m \geq C_2,  I \in \mathcal{I}_m}} \mathcal{F}_I$.\newline 
    
    \emph{Claim 2:} For every integer $m \geq C_2$ and interval $I \in \mathcal{I}_m$, every graph $G$ with $|G| \in I$ obtained by adding isolated vertices to $H_I$ is $\mathcal{F}$-saturated. \newline

    \emph{Proof:} We first show that $G$ is $\mathcal{F}$-free. Note that for every integer $m' \geq C_2$ and interval $I' \in \mathcal{I}_{m'}$, every graph in $\mathcal{F}_{I'}$ contains $H_{I'}$. Hence, if $G$ contains a graph in $\mathcal{F}_{I'}$, it contains $H_{I'}$. But by part 3 of Lemma \ref{antichain}, $H_{I'}$ has no isolated vertices, so this can only happen if $H_I$ contains $H_{I'}$. Since $\mathcal{A}$ is an antichain, $\mathcal{A}_{m'}$ is disjoint from $\mathcal{A}_m$ if $m' \neq m$ and $H_{I'} \neq H_I$ if $I' \in \mathcal{I}_m \setminus \{I\}$, this can only happen if $m'=m$ and $I'=I$. \newline
    
    But every graph in $\mathcal{F}_I$ of the first, second or third type has more edges than $G$ and the graph in $\mathcal{F}_I$ of the fourth type has more vertices than $G$, so $G$ is $\mathcal{F}$-free. It remains to show that every graph $G+vw$ obtained by adding a new edge $vw$ to $G$ contains a graph in $\mathcal{F}$. If $v,w \in H_I$, $G+vw$ contains a graph in $\mathcal{F}_I$ of the first type, if $v \in H_I$ and $w \not \in H_I$, $G+vw$ contains a graph in $\mathcal{F}_I$ of the second type, and if $v,w \not \in H_I$, $G+vw$ contains the graph in $\mathcal{F}_I$ of the third type. \qedclaim\newline

    Since  $a_I \geq |H_I|$ for every integer $m \geq C_2$ and interval $I \in \mathcal{I}_m$, by Claim 2 and condition (c), there is an $\mathcal{F}$-saturated graph with $n$ vertices and $s_n$ edges for every large enough integer $n$. The following claim completes the proof of part 2. \newline

    \emph{Claim 3:} For large enough $n$, every $\mathcal{F}$-saturated graph $G$ on $n$ vertices not of the form in Claim 2 has more than $s_n$ edges. \newline

    \emph{Proof:} We show that $\delta(G) \geq 2C_1+2$ and hence that $e(G) \geq (C_1+1)n > C_1(n+1) \geq s_n$ if $n$ is large enough. Let $v \in G$. If $v$ is adjacent to every other vertex, $d(v) = n-1 \geq 2C_1+2$ if $n$ is large enough, so suppose there is a vertex $w \neq v$ not adjacent to $v$. Then $G+vw$ contains a graph in $\mathcal{F}_I$ for some integer $m \geq C_2$ and interval $I \in \mathcal{I}_m$. Since every graph in $\mathcal{F}_I$ contains $H_I$ and has at least $a_I$ vertices, it follows that $G+vw \supseteq H_I$ and $n \geq a_I$. \newline
    
    Now suppose for the sake of contradiction that $G$ contains a copy of $H_I$. Then, since $n \geq a_I$, the copy of $H_I$ must be induced, for otherwise $G$ would contain a graph in $\mathcal{F}_I$ of the first type, no vertex in the copy can be adjacent to a vertex outside the copy, for otherwise $G$ would contain a graph in $\mathcal{F}_I$ of the second type, there can be no edge outside the copy, for otherwise $G$ would contain the graph in $\mathcal{F}_I$ of the third type, and $n \leq b_I$, for otherwise $G$ would contain the graph in $\mathcal{F}_I$ of the fourth type. But then $G$ is of the form in Claim 2, a contradiction. \newline

    Hence $G \not \supseteq H_I$ but $G+vw \supseteq H_I$, so the edge $vw$, and hence the vertex $v$, must belong to a copy of $H_I$ in $G+vw$. Since $\delta(H_I) \geq 2C_1+3$, it follows that $d(v) \geq 2C_1+2$. \qedclaim\newline

    Finally, we prove part 3. Let $m \geq 0$ be an integer. Let $H$ be any graph with $m$ edges and $\mathcal{F}$ be the family containing the following graphs.

    \begin{enumerate}
        \item Every graph $H+e$ obtained by adding a new edge $e$ to $H$. 
        \item Every graph obtained by adding a new vertex to $H$ and joining it to a vertex in $H$.
        \item The graph obtained by adding an isolated edge to $H$.
    \end{enumerate}

\begin{figure}[H]
\centering
\begin{tikzpicture}

    \begin{scope}[shift={(0,-0.25)}]
    \draw (-2.5,0) ellipse (0.75 and 0.75);
    \node[anchor=center] at (-2.5,0) {$H$};
    \filldraw[black] (-0.375-2.5,0.4871) circle (0.75pt);
    \filldraw[black] (0.375-2.5,0.1624) circle (0.75pt);
    \draw  (-0.375-2.5,0.4871) -- (0.375-2.5,0.1624);
    \node[anchor=north] at (-2.5,-1) {Type 1};
    \end{scope}
    
    \begin{scope}[shift={(0.9,-0.25)}]
    \draw (0,0) ellipse (0.75 and 0.75);
    \node[anchor=center] at (0,0) {$H$};
    \filldraw[black] (0.28125,-0.365325) circle (0.75pt);
    \filldraw[black] (1.125,-0.1624) circle (0.75pt);
    \draw (0.28125,-0.365325) -- (1.125,-0.1624);
    \node[anchor=north] at (0.25,-1) {Type 2};
    \end{scope}
    
    \begin{scope}[shift={(2,-0.25)}]
    \draw (2.5,0) ellipse (0.75 and 0.75);
    \node[anchor=center] at (2.5,0) {$H$};
    \filldraw[black] (4,0.4) circle (0.75pt);
    \filldraw[black] (4,-0.4) circle (0.75pt);
    \draw (4,0.4) -- (4,-0.4);
    \node[anchor=north] at (3,-1) {Type 3};
    \end{scope}

\end{tikzpicture}
\caption{The three types of graphs in $\mathcal{F}$.}\label{fig:3types}
\end{figure}
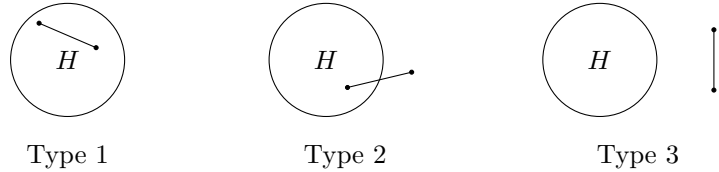

    Note that $\mathcal{F}$ is finite. We first show that every graph $G$ obtained by adding isolated vertices to $H$ is $\mathcal{F}$-saturated. Indeed, $G$ is $\mathcal{F}$-free, since every graph in $\mathcal{F}$ has more edges than $G$, and every graph $G+vw$ obtained by adding a new edge $vw$ to $G$ contains a graph in $\mathcal{F}$ - of the first type if $v,w \in H$, of the second type if $v \in H$ and $w \not \in H$ and of the third type if $v,w \not \in H$. Hence $sat(n,\mathcal{F}) \leq m$ for large enough $n$. \newline
    
    Now suppose $G$ is an $\mathcal{F}$-saturated graph on $n$ vertices. If $n$ is large enough, $G$ cannot be complete, for otherwise it would contain the graph in $\mathcal{F}$ of the third type. Hence we can add a new edge to $G$ to obtain a graph containing a graph in $\mathcal{F}$. Since every graph in $\mathcal{F}$ has $m$+1 edges, it follows that $G$ has at least $m$ edges. So $sat(n,\mathcal{F}) = m$ for large enough $n$.
    
\end{proof}

We now deduce Corollaries 1 through 3.

\begin{repincrcor}

        Let $(s_n)_{n \geq 0}$ be a sequence of integers such that the following hold.
        \begin{enumerate}[(a)]
            \item $s_n=O(n)$.
            \item $s_{n+1} \geq s_n$ for large enough $n$.
            \item  $s_n \to \infty$ as $n \to \infty$.
        \end{enumerate}
        Then there exists a family $\mathcal{F}$ of graphs such that $sat(n,\mathcal{F})=s_n$ for large enough $n$.

\end{repincrcor}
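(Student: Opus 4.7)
The plan is to derive this directly from part 2 of Theorem \ref{arbfam}. Conditions (a) and (c) of the corollary are identical to conditions 2(a) and 2(c) of the theorem, so it suffices to verify condition 2(b): for every large enough integer $m$, the set $\{n \in \mathbb{Z}_{\geq 0} : s_n = m\}$ is a union of at most $b_m$ intervals.

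First I would fix an integer $N_0$ such that $s_{n+1} \geq s_n$ holds for all $n \geq N_0$; such an $N_0$ exists by hypothesis (b). Let $M = \max_{n < N_0} s_n$ (with the convention $M = -\infty$ if $N_0 = 0$). For any integer $m > M$, the set $\{n \in \mathbb{Z}_{\geq 0} : s_n = m\}$ is contained in $\{n \geq N_0\}$, and on this range the sequence $(s_n)$ is non-decreasing. Hence the preimage of the single value $m$ under a non-decreasing integer sequence is an interval (possibly empty), so $\{n \in \mathbb{Z}_{\geq 0} : s_n = m\}$ is the union of at most one interval.

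Finally, since $b_m \geq \exp[m \log m + O(m \log \log m)] \to \infty$, we have $b_m \geq 1$ for all sufficiently large $m$. Combining this with the previous paragraph, for every large enough $m$ the set $\{n \in \mathbb{Z}_{\geq 0} : s_n = m\}$ is the union of at most $b_m$ intervals, which is exactly condition 2(b). Therefore part 2 of Theorem \ref{arbfam} applies and yields a family $\mathcal{F}$ with $sat(n,\mathcal{F}) = s_n$ for large enough $n$. There is no real obstacle here; the only thing to notice is that monotonicity of $(s_n)$ makes the interval-count hypothesis of Theorem \ref{arbfam} trivial (at most one interval per value), far below the allowed bound $b_m$.
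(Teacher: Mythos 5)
Your proposal is correct and follows essentially the same route as the paper: both reduce to checking condition 2(b) of Theorem \ref{arbfam} by observing that eventual monotonicity makes each level set $\{n : s_n = m\}$ a single interval for all $m$ exceeding the maximum of the initial segment, and then invoking $b_m \geq 1$ for large $m$. No issues.
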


\begin{proof}

By part 2 of Theorem \ref{arbfam}, we just need to check that $(s_n)_{n \geq 0}$ satisfies condition (b). By condition (b) in Corollary \ref{incrcor}, there is a constant $C \geq 0$ such that $s_{n+1} \geq s_n$ for every integer $n > C$. We show that the set $\{n \in \mathbb{Z}_{\geq 0} : s_n=m\}$ is an interval for every integer $m>\max_{0 \leq n \leq C } \ s_n$, which implies that $(s_n)_{n \geq 0}$ satisfies condition (b), since $b_m \geq 1$ for large enough $m$. Let $m>\max_{0 \leq n \leq C } \ s_n$ and $0 \leq a \leq b \leq c$ be integers with $s_a=m=s_c$. Then $a>C$, so $m=s_a \leq s_{a+1} \leq s_{a+2} \leq \cdots \leq s_b \leq \cdots \leq s_c=m$, so $s_b=m$.

\end{proof}

\begin{repsizecor}

    Let $(s_n)_{n \geq 0}$ be a sequence of integers with

    $$ \frac{\log n}{\log \log n} + w\left(\frac{\log n \log \log \log n}{(\log \log n)^2}\right)\leq s_n = O(n). $$
    
    Then there exists a family $\mathcal{F}$ of graphs such that $sat(n,\mathcal{F})=s_n$ for large enough $n$.
    
\end{repsizecor}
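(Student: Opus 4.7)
The plan is to apply part 2 of Theorem \ref{arbfam} to the given sequence $(s_n)$. Conditions (a) and (c) are trivial: (a) is the hypothesis, and (c) follows since $\log n / \log \log n \to \infty$. So the real work is to verify condition (b): for every large enough $m$, the set $\{n \in \mathbb{Z}_{\geq 0} : s_n = m\}$ is a union of at most $b_m$ intervals.

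Any set of $k$ integers is automatically a union of at most $k$ intervals, so it suffices to show that $|\{n : s_n = m\}| \leq b_m$ for large $m$. Setting $N_m := \max\{n \in \mathbb{Z}_{\geq 0} : s_n \leq m\}$, it even suffices to prove $N_m \leq b_m$. Fix $n \leq N_m$ and write $L = \log n$, $\ell = \log \log n$, $\lambda = \log \log \log n$; the hypothesis becomes
\[
\frac{L}{\ell}\left(1 + \phi(n) \cdot \frac{\lambda}{\ell}\right) \leq m
\]
for some $\phi(n) \to \infty$. A short rearrangement, with a case split on whether $\phi(n)\lambda \leq \ell$, gives either $L \leq m\ell/2$ or $L \leq m\ell - \tfrac{1}{2} m \phi(n) \lambda$. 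Combined with the standard iterative estimates $\ell = \log m + \log\log m + O(1)$ and $\lambda = \log\log m + O(1)$ (valid whenever $L$ lies in the relevant range), both cases yield
\[
L \leq m\log m + m\log\log m - \tfrac{1}{2} m \phi(n) \log\log m + O(m).
\]
Since $\phi(n) \to \infty$, for any fixed $K$ and all sufficiently large $m$ one obtains $L \leq m\log m - K m\log\log m$. Taking $K$ larger than the implied constant in $b_m \geq \exp[m\log m + O(m\log\log m)]$ then gives $N_m \leq b_m$, completing the verification of (b).

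The main obstacle is the self-consistent asymptotic analysis needed to pin down the bound on $L$: the excess term $\omega\bigl(\frac{\log n \log\log\log n}{(\log\log n)^2}\bigr)$ in the hypothesis is exactly the slack needed to sharpen the naive estimate $\log N_m \leq m\log m + O(m\log\log m)$ (which follows from only $\log n / \log\log n \leq m$) into one that fits strictly below $b_m$ with the correct implicit constant. Once that estimate is in place, the rest of the deduction is purely mechanical.
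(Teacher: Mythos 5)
Your proposal is correct and follows essentially the same route as the paper: both reduce to condition (b) of part 2 of Theorem \ref{arbfam} and verify it by showing that every $n$ with $s_n\le m$ satisfies $n\le b_m$, so that $\{n: s_n=m\}$ is a union of at most $b_m$ singleton intervals. The only cosmetic difference is that the paper substitutes $n\ge b_m$ directly into the lower bound for $s_n$ and invokes monotonicity of $\log x/\log\log x$ and $\log x\log\log\log x/(\log\log x)^2$, whereas you invert the inequality $s_n\le m$ to bound $\log n$; the asymptotic bookkeeping (and the role of the $\omega$-term in absorbing the $O(m\log\log m)$ slack in $\log b_m$) is the same.
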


\begin{proof}
    
By part 2 of Theorem \ref{arbfam}, we just need to check that $(s_n)_{n \geq 0}$ satisfies condition (b). We show that for every large enough integer $m$, $s_n>m$ for every integer $n \geq b_m$. Hence every element of the set $\{n \in \mathbb{Z}_{\geq 0} : s_n=m\}$ is smaller than $b_m$, so the set has size at most $b_m$ and hence is the union of at most $b_m$ (singleton) intervals. Let $m \geq 0$ and $n \geq b_m$ be integers. Then $n \geq b_m \geq \exp\left[m \log m + O(m \log \log m)\right]$, so

$$ s_n \geq \frac{\log n}{\log \log n} + w\left(\frac{\log n \log \log \log n}{\left(\log \log n\right)^2}\right) $$

$$ \geq  \frac{m \log m + O(m \log \log m)}{\log \left[m \log m + O(m \log \log m) \right]} + w\left(\frac{\left[m \log m + O(m \log \log m) \right] \log \log \left[m \log m + O(m \log \log m) \right]}{(\log \left[m \log m + O(m \log \log m) \right])^2}\right) $$

$$ = m + w\left(\frac{m \log \log m}{\log m} \right) , $$

since $\log x/ \log \log x$ and $\log x \log \log \log x / (\log \log x)^2$ are eventually increasing functions of $x$. Hence $s_n>m$ if $m$ is large enough.

\end{proof}

\begin{repAP}
    For every non-empty compact set $K$ of non-negative real numbers, there exists a family $\mathcal{F}$ of graphs such that the set of accumulation points of $\left(sat(n,\mathcal{F})/n\right)_{n \geq 1}$ is $K$.
\end{repAP}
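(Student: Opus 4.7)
The plan is to deduce this from Corollary \ref{sizecor} by constructing an integer sequence $(s_n)_{n \geq 0}$ in the domain of that corollary whose ratios $s_n/n$ have accumulation set exactly $K$, and then to take $\mathcal{F}$ to be the family produced by Corollary \ref{sizecor} for this $(s_n)$. The construction uses three ingredients: a sequence $(q_j)_{j \geq 1}$ dense in $K$ (which exists since $K$ is separable); a surjection $g : \mathbb{Z}_{\geq 1} \to \mathbb{Z}_{\geq 1}$ all of whose fibres $A_j := g^{-1}(j)$ are infinite, e.g.\ a diagonal enumeration; and an integer-valued function $h$ with $h(n) = o(n)$ that comfortably dominates the lower bound in Corollary \ref{sizecor}, say $h(n) = \lceil (\log n)^2 \rceil$. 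For $n$ large I would define $s_n := \max\bigl(\lfloor q_{g(n)}\, n \rfloor,\ h(n)\bigr)$; the hypotheses $s_n = O(n)$ (using that $K$ is bounded and $h(n) = o(n)$) and the lower bound on $s_n$ (from the choice of $h$) are then immediate, so Corollary \ref{sizecor} yields a family $\mathcal{F}$ with $sat(n,\mathcal{F}) = s_n$ for large $n$. It therefore suffices to prove that the accumulation set of $(s_n/n)$ is $K$.

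For the inclusion $K \subseteq \{\text{accumulation points}\}$, I would show that each $q_j$ is an accumulation point: along the infinite set $A_j$ one has $s_n/n \to q_j$, because if $q_j > 0$ then $\lfloor q_j n \rfloor > h(n)$ for $n \in A_j$ large and so $s_n/n \in (q_j - 1/n, q_j]$, while if $q_j = 0$ then $s_n = h(n)$ eventually and $h(n)/n \to 0$. Since the accumulation set is closed and contains the dense subset $(q_j)$, it contains $K$. For the reverse inclusion, given any convergent subsequence $s_{n_k}/n_k \to x$ I would split the indices $k$ according to whether $s_{n_k} = h(n_k)$ or $s_{n_k} = \lfloor q_{g(n_k)} n_k \rfloor$; the first case forces $x = 0$ and occurs infinitely often only when $0 \in K$ (otherwise $q_{g(n)} \geq \min K > 0$ makes $\lfloor q_{g(n)} n \rfloor > h(n)$ for all large $n$, making this case finite), while the second case gives $x = \lim_k q_{g(n_k)}$, which lies in $K$ by closedness.

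The main obstacle is the tension between the lower bound in Corollary \ref{sizecor}, which forbids $s_n$ from being too small, and the need to produce $0$ as an accumulation point whenever $0 \in K$; inserting the polylogarithmic floor $h(n)$ into the definition of $s_n$ resolves this, and once it is in place the rest of the verification is essentially bookkeeping with the density of $(q_j)$ in $K$ and the closedness of the accumulation set.
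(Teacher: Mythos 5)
Your proposal is correct and follows essentially the same route as the paper: both reduce to Corollary \ref{sizecor} by spreading a countable dense subset of $K$ over infinitely many infinite index classes and adding an $o(n)$ correction term to meet the corollary's lower bound. The only cosmetic difference is that the paper uses $s_n=\lceil sn+\sqrt{n}\rceil$ on each class where you use $\max\bigl(\lfloor q_{g(n)}n\rfloor,\lceil(\log n)^2\rceil\bigr)$; both devices serve the identical purpose of handling the accumulation point $0$ while staying above the threshold of Corollary \ref{sizecor}.
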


\begin{proof}

We first construct a countable set $S$ of real numbers with $\overline{S}=K$ (where $\overline{S}$ is the closure of $S$). For all integers $m \geq 1$ and $n$, let $S_{m,n}=\emptyset$ if $K \cap [n/m,(n+1)/m] = \emptyset$ and let $S_{m,n}=\{x\}$ for some $x \in K \cap [n/m,(n+1)/m]$ otherwise (where $[n/m,(n+1)/m]=\{x \in \mathbb{R} : n/m \leq x \leq (n+1)/m \}$). Let $S=\bigcup_{m \geq 1, n} S_{m,n}$. Then $S$ is countable and $S \subseteq K$, so $\overline{S} \subseteq K$, since $K$ is closed. It remains to show that $K \subseteq \overline{S}$. Let $k \in K$. Then for every integer $m \geq 1$, there is an integer $n$ such that $k \in [n/m,(n+1)/m]$. But then $K \cap [n/m,(n+1)/m] \neq \emptyset$, so there exists $x \in S \cap [n/m,(n+1)/m]$. Then $|x-k| \leq 1/m$. Since $m$ is arbitrary, it follows that $k \in \overline{S}$. \newline

Since $K$ is non-empty, bounded and contains only non-negative real numbers, the same is true for $S$. Since $S$ is non-empty and countable, there is a partition $\mathbb{Z}_{\geq 0}=\bigcup_{s \in S} P_s$ where every part $P_s$ is infinite. For each integer $ n \geq 0$, let $s_n=\left\lceil sn+\sqrt{n} \right\rceil$, where $s$ is the unique element of $S$ with $n \in P_s$. Then $(s_n)_{n \geq 0}$ is a sequence of integers with
$\sqrt{n} \leq s_n = O(n)$, since $S$ is bounded and contains only non-negative real numbers, so by Corollary~\ref{sizecor}, there is a family $\mathcal{F}$ of graphs such that $sat(n,\mathcal{F})=s_n$ for large enough $n$. \newline

Let $A$ be the set of accumulation points of $\left(sat(n,\mathcal{F})/n\right)_{n \geq 1}$. We show that $A=K$. For every $s \in S$, $sat(n,\mathcal{F})/n = s + O\left(1/\sqrt{n}\right)$ for $n \in P_s$. Since $P_s$ is infinite, it follows that $s \in A$. Hence $S \subseteq A$, so $K = \overline{S} \subseteq A$, since $A$ is closed. It remains to show that $A \subseteq K$. Let $a \in A$. Then there are arbitrarily large integers $n$ with $sat(n,\mathcal{F})/n$ arbitrarily close to $a$. Since for every $n$, $sat(n,\mathcal{F})/n = s + O(1/\sqrt{n})$ for some $s \in S$, it follows that there are $s \in S$ arbitrarily close to $a$, or in other words that $a \in \overline{S}=K$.

\end{proof}

\section{Quasi-finite families}\label{fin}

In this section we prove Theorem \ref{quasfinfam}. We first introduce some notation. Given graphs $G$ and $H$, we write $G \cup H$ for the disjoint union of $G$ and $H$. Given a graph $G$ and an integer $k \geq 0$, we write $kG$ for the disjoint union of $k$ copies of $G$. Given a tree $T$, a subtree $T' \subseteq T$, vertices $v \in T'$ and $w \in T$ and an integer $k \geq 0$, let $\left(T' \cup kT \right) + v\mathbf{w}$ be the tree obtained from $T' \cup kT$ by joining $v$ to the $k$ copies of $w$. 

\begin{figure}[H]
\centering
\scalebox{1}{
\begin{tikzpicture}

    \draw (0,0) ellipse (0.75 and 0.75);
    \node[anchor=center] at (0,0) {$T'$};
    \filldraw[black] (0,-0.375) circle (0.75pt);
    \node[anchor=west] at (0,-0.34) {$v$};

    \begin{scope}[shift={(-2.25/2,0)}]
    \draw (4.5,-3) ellipse (0.75 and 0.75);
    \node[anchor=center] at (4.5,-3) {$T$};
    \filldraw[black] (4.5,-2.625) circle (0.75pt);
    \node[anchor=west] at (4.5,-2.625) {$w$};
    \end{scope}
    
    \draw  ($ (-2.25,-2.625)+(2.25/2,0) $) -- (0,-0.375);
    \draw  ($ (-4.5,-2.625)+(2.25/2,0) $) -- (0,-0.375);
    \draw  (0,-0.375) -- (4.5-2.25/2,-2.625);
    
    \begin{scope}[shift={(2.25/2,0)}]
    \draw (-2.25,-3) ellipse (0.75 and 0.75);
    \node[anchor=center] at (-2.25,-3) {$T$};
    \filldraw[black] (-2.25,-2.625) circle (0.75pt);
    \node[anchor=east] at (-2.25,-2.625) {$w$};

    \draw (-4.5,-3) ellipse (0.75 and 0.75);
    \node[anchor=center] at (-4.5,-3) {$T$};
    \filldraw[black] (-4.5,-2.625) circle (0.75pt);
    \node[anchor=east] at (-4.5,-2.625) {$w$};
    \end{scope}

    \node[anchor=center] at (2.25/2,-3) {$\cdots$};
    
    \draw [decorate,decoration={brace,amplitude=10pt,raise=5pt}] (2.25*1.5+0.75,-3.75) -- (-2.25*1.5-0.75,-3.75) node[midway,below=20pt] {$k$};

\end{tikzpicture}}
\caption{The tree $\left(T' \cup kT\right)+v\mathbf{w}$.}\label{fig:vectortree}
\end{figure}

We will need the following lemma in the proof of part 1 (c) of Theorem \ref{quasfinfam}.

\begin{lemma}\label{expl}
For every tree $T$ with $|T| \in \{1,2,3,4,5,7\}$ there is a subtree $T' \subseteq T$ such that $|T'|$ and $|T|$ are coprime and for all vertices $v \in T'$ and $w \in T$, $\left(T' \cup kT \right) + v\mathbf{w}$ contains a copy of $T$ that contains $v$ for some integer $k \geq 0$.
\end{lemma}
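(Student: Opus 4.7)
The plan is to proceed by explicit case analysis on $T$. For each tree with $|T| \in \{1, 2, 3, 4, 5, 7\}$ I would exhibit a subtree $T' \subseteq T$ satisfying both the coprimality condition and the embedding condition. For $|T| = 1$ take $T' = T$ trivially. For $|T| \geq 2$ my first attempt would be $T' = \{v\}$, a single vertex, which makes $\gcd(|T'|, |T|) = 1$ immediate. Note that the ambient graph $G = (T' \cup kT) + v\mathbf{w}$ is itself a tree, so any copy of $T$ in $G$ containing $v$ is a subtree. Such a subtree determines a vertex $u \in T$ whose role $v$ plays; since $v$'s neighbors in $G$ are precisely the $k$ copies of $w$, each neighbor of $u$ in the copy must be a distinct copy $w_i$, and removing $v$ separates the copy of $T$ into its branches at $u$, each contained in a single copy of $T$ in $G$. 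Conversely, if there exists $u \in T$ such that for every neighbor $u_i$ of $u$ in $T$ the branch $B_i$ (the component of $T-u$ containing $u_i$, rooted at $u_i$) embeds as a rooted tree into $T$ rooted at $w$, then taking $k = \deg_T(u)$, letting $v$ play $u$ and assembling these branch-embeddings one per copy of $T$ yields the required copy of $T$ containing $v$.

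The lemma for $T' = \{v\}$ thus reduces to the following combinatorial condition: for every $w \in T$ there exists $u \in T$ such that every rooted branch of $T$ at a neighbor of $u$ embeds as a rooted tree into $T$ rooted at $w$. For $|T| \in \{2, 3, 4, 5\}$ (six trees in total) this is checked by inspection; a typical working $u$ is a leaf ``opposite'' to $w$, except when $w$ is a leaf attached to a high-degree vertex (as in $K_{1,n}$), in which case one takes $u$ to be that high-degree vertex, whereupon every branch is a single vertex and embeds trivially. For $|T| = 7$ one proceeds analogously through the eleven non-isomorphic trees; if in some subcase the single-vertex choice fails because every branch at every $u$ is too large to fit rooted at $w$, one enlarges $T'$ to a suitable proper subtree of $T$ (any proper subtree has size in $\{1, \ldots, 6\}$, all coprime to $7$) and runs the analogous branch-embedding analysis with $v$ now ranging over $T'$.

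The main obstacle is the $|T| = 7$ case: eleven non-isomorphic trees, several orbits of $w$, and potentially more than one candidate $T'$ to consider give a long but finite case analysis. The conspicuous absence of $|T| = 6$ from the statement strongly suggests that for some six-vertex tree no valid $T'$ can be found, so the argument for $|T| = 7$ cannot be a generic ``small tree'' argument but must exploit features specific to the prescribed sizes — notably that $7$ is prime, granting full flexibility in the choice of subtree size, and that a size-$7$ tree is just small enough that one can always locate a vertex $u$ whose branches fit rooted at an arbitrary $w$, possibly after enlarging $T'$ and matching its internal structure carefully against $T$.
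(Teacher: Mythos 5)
Your setup is the same as the paper's: take $T'$ to be a single vertex whenever possible, and your reduction of that case to a rooted-embedding condition (for every $w$ there is a $u\in T$ such that every branch of $T$ at a neighbour of $u$ embeds into $T$ rooted at $w$, with $k=\deg_T(u)$) is correct and is exactly the reasoning the paper uses implicitly. For $|T|\le 5$ the singleton choice does work for every $w$, as you say, and your framework (letting $u$ depend on $w$) also handles the one seven-vertex tree where the singleton works but the target vertex $u$ must change when $w$ is a leaf.

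The gap is that for $|T|=7$ you never carry out the decisive step. The singleton choice genuinely fails for two of the eleven trees --- the double star $S(2,3)$ (an edge $bc$ with three leaves on $c$ and two on $b$) and the tree obtained from a path $a$--$c$--$d$ by adding two leaves at $a$, one at $c$ and one at $d$ --- when $w$ is a leaf: in each case every vertex $u$ has some branch whose root has degree at least $2$, and such a branch cannot embed rooted at a degree-one vertex $w$. At that point you say one ``enlarges $T'$ to a suitable proper subtree'' and ``runs the analogous analysis'', but exhibiting such a $T'$ and verifying the condition \emph{for every} $v\in T'$ and every $w\in T$ is precisely the non-trivial content of the lemma; its truth is not automatic, as the exclusion of $|T|=6$ shows (for the double star $S(2,2)$ on six vertices the only admissible subtree sizes are $1$ and $5$, and neither works). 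The paper resolves the two bad cases by taking $T'=S(2,2)$, a six-vertex subtree of both trees, and exploiting the specific fact that every vertex of $S(2,2)$ is adjacent to a vertex of degree three, so that joining any $v\in T'$ to $k$ copies of $w$ ($k=3$ or $2$ respectively) always recreates the required two adjacent high-degree vertices of $T$. Without identifying the failing trees, producing this $T'$, and checking all $v$ and $w$ for it, the proof is incomplete exactly where the difficulty lies.
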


\begin{proof}

We first list all trees $T$ with $|T| \in \{1,2,3,4,5,7\}$. \newline

\begin{figure}[H]
\centering
\scalebox{1.5}{
\begin{tikzpicture}

    \filldraw[black] (-3,3) circle (0.75pt);
    \draw  (-3,3) ellipse (0.075 and 0.075);
    \node[anchor=north] at (-3,2.75) {$T_{1,1}$};

    \draw  (-2,3) -- (-1.5,3);
    \filldraw[black] (-2,3) circle (0.75pt);
    \draw  (-2,3) ellipse (0.075 and 0.075);
    \filldraw[black] (-1.5,3) circle (0.75pt);
    \node[anchor=north] at (-1.75,2.75) {$T_{2,1}$};

    \draw  (-0.5,3) -- (0,3);
    \draw  (0,3) -- (0.5,3);
    \filldraw[black] (-0.5,3) circle (0.75pt);
    \filldraw[black] (0,3) circle (0.75pt);
    \draw  (0,3) ellipse (0.075 and 0.075);
    \filldraw[black] (0.5,3) circle (0.75pt);
    \node[anchor=north] at (0,2.75) {$T_{3,1}$};

    \draw (1.933,3) -- (1.5,2.75);
    \draw (1.933,3) -- (1.933,3.5);
    \draw (1.933,3) -- (2.366,2.75);
    \node[anchor=north] at (1.933,2.5) {$T_{4,1}$};
    \filldraw[black] (1.5,2.75) circle (0.75pt);
    \filldraw[black] (1.933,3) circle (0.75pt);
    \draw (1.933,3) ellipse (0.075 and 0.075);
    \filldraw[black] (1.933,3.5) circle (0.75pt);
    \filldraw[black] (2.366,2.75) circle (0.75pt);

    \draw  (3.366+0.5,3) -- (3.366,3);
    \draw  (4.366,3) -- (3.366+0.5,3);
    \draw  (4.366,3) -- (4.366+0.5,3);
    \filldraw[black] (3.366,3) circle (0.75pt);
    \filldraw[black] (3.866,3) circle (0.75pt);
    \draw  (3.866,3) ellipse (0.075 and 0.075);
    \filldraw[black] (4.366,3) circle (0.75pt);
    \filldraw[black] (4.366+0.5,3) circle (0.75pt);
    \node[anchor=north] at (4.116,2.75) {$T_{4,2}$};

    \draw  (-3,1) -- (-3,1.5);
    \draw  (-3,1) -- (-3,0.5);
    \draw  (-3,1) -- (-2.5,1);
    \draw  (-3,1) -- (-3.5,1);
    \node[anchor=center] at (-3,0) {$T_{5,1}$};
    \filldraw[black] (-3,1) circle (0.75pt);
    \draw  (-3,1) ellipse (0.075 and 0.075);
    \filldraw[black] (-3,1.5) circle (0.75pt);
    \filldraw[black] (-3,0.5) circle (0.75pt);
    \filldraw[black] (-2.5,1) circle (0.75pt);
    \filldraw[black] (-3.5,1) circle (0.75pt);

    \draw  (-1.5,1.3535) -- (-1.1464,1);
    \draw  (-1.5,1-0.3535) -- (-1.1464,1);
    \draw  (-1.1464,1) -- (-0.6464,1);
    \draw  (-0.6464,1) -- (-0.1464,1);
    \node[anchor=center] at (-0.8232,0) {$T_{5,2}$};
    \filldraw[black] (-1.5,1.3535) circle (0.75pt);
    \filldraw[black] (-1.5,1-0.3535) circle (0.75pt);
    \filldraw[black] (-1.1464,1) circle (0.75pt);
    \draw  (-1.1464,1) ellipse (0.075 and 0.075);
    \filldraw[black] (-0.6464,1) circle (0.75pt);
    \filldraw[black] (-0.1464,1) circle (0.75pt);

    \draw  (0.8536,1) -- (1.3536,1);
    \draw  (1.3536,1) -- (1.8536,1);
    \draw  (1.8536,1) -- (2.3536,1);
    \draw  (2.3536,1) -- (2.8536,1);
    \node[anchor=north] at (1.8536,0.75) {$T_{5,3}$};
    \filldraw[black] (0.8536,1) circle (0.75pt);
    \filldraw[black] (1.3536,1) circle (0.75pt);
    \filldraw[black] (1.8536,1) circle (0.75pt);
    \draw  (1.8536,1) ellipse (0.075 and 0.075);
    \filldraw[black] (2.3536,1) circle (0.75pt);
    \filldraw[black] (2.8536,1) circle (0.75pt);

    \draw  (3.8536,1.25) -- (4.2866,1);
    \draw  (3.8536,0.75) -- (4.2866,1);
    \draw  (4.2866,1.5) -- (4.2866,1);
    \draw  (4.2866,0.5) -- (4.2866,1);
    \draw  (4.7196,1.25) -- (4.2866,1);
    \draw  (4.7196,0.75) -- (4.2866,1);
    \node[anchor=center] at (4.2866,0) {$T_{7,1}$};
    \filldraw[black] (3.8536,1.25) circle (0.75pt);
    \filldraw[black] (3.8536,0.75) circle (0.75pt);
    \filldraw[black] (4.2866,1.5) circle (0.75pt);
    \filldraw[black] (4.2866,1) circle (0.75pt);
    \draw (4.2866,1) ellipse (0.075 and 0.075);
    \filldraw[black] (4.2866,0.5) circle (0.75pt);
    \filldraw[black] (4.7196,1.25) circle (0.75pt);
    \filldraw[black] (4.7196,0.75) circle (0.75pt);

    \filldraw[black] (-2.5,-1.5) circle (0.75pt);
    \filldraw[black] (-2,-1.5) circle (0.75pt);
    \draw (-3,-1.5) -- (-3.4045,0.2934-1.5);
    \draw (-3,-1.5) -- (-3.4045,-0.2934-1-0.5);
    \draw (-3,-1.5) -- (-2.8455,-1.4755-0.5);
    \draw (-3,-1.5) -- (-2.8455,-0.5245-0.5);
    \draw  (-3,-1.5) -- (-2.5,-1.5);
    \draw  (-2.5,-1.5) -- (-2,-1.5);
    \node[anchor=center] at (-2.7023,-2.5) {$T_{7,2}$};
    \filldraw[black] (-3,-1.5) circle (0.75pt);
    \draw (-3,-1.5) ellipse (0.075 and 0.075);
    \filldraw[black] (-3.4045,0.2934-1.5) circle (0.75pt);
    \filldraw[black] (-3.4045,-0.2934-1.5) circle (0.75pt);
    \filldraw[black] (-2.8455,-0.5245-0.5) circle (0.75pt);
    \filldraw[black] (-2.8455,-1.4755-0.5) circle (0.75pt);

    \draw (1.933-2.5,3-4.5-0.25) -- (1.5-2.5,2.75-4.5-0.25);
    \draw (1.933-2.5,3-4.5-0.25) -- (1.933-2.5,3.5-4.5-0.25);
    \draw (1.933-2.5,3-4.5-0.25) -- (2.366-2.5,2.75-4.5-0.25);
    \draw (1.933-2.5,-0.5-0.25) -- (1.933-2.5,-1-0.25);
    \draw (1.067-2.5,2.5-4.5-0.25) -- (1.5-2.5,2.75-4.5-0.25);
    \draw (2.799-2.5,2.5-4.5-0.25) -- (2.366-2.5,2.75-4.5-0.25);
    \node[anchor=center] at (1.933-2.5,-2.5) {$T_{7,3}$};
    \filldraw[black] (1.5-2.5,2.75-4.5-0.25) circle (0.75pt);
    \filldraw[black] (1.933-2.5,3-4.5-0.25) circle (0.75pt);
    \draw (1.933-2.5,3-4.5-0.25) ellipse (0.075 and 0.075);
    \filldraw[black] (1.933-2.5,3.5-4.5-0.25) circle (0.75pt);
    \filldraw[black] (1.933-2.5,-0.5-0.25) circle (0.75pt);
    \filldraw[black] (2.366-2.5,2.75-4.5-0.25) circle (0.75pt);
    \filldraw[black] (1.067-2.5,2.5-4.5-0.25) circle (0.75pt);
    \filldraw[black] (2.799-2.5,2.5-4.5-0.25) circle (0.75pt);

    \draw  (2.007,-1.5) -- (1.6535,-1.8535);
    \draw  (2.007,-1.5) -- (2.3605,-1.8535);
    \draw (2.3605,-1.8535) -- (2.714,-2.207);
    \draw (1.6535,-1.8535) -- (1.3,-2.207);
    \draw (2.007,-1.5) -- (1.6535,-1.1465);
    \draw (2.007,-1.5) -- (2.3605,-1.1465);
    \node[anchor=center] at (2.007,-2.5) {$T_{7,4}$};
    \filldraw[black] (1.3,-2.207) circle (0.75pt);
    \filldraw[black] (1.6535,-1.8535) circle (0.75pt);
    \filldraw[black] (2.007,-1.5) circle (0.75pt);
    \draw (2.007,-1.5) ellipse (0.075 and 0.075);
    \filldraw[black] (2.714,-2.207) circle (0.75pt);
    \filldraw[black] (2.3605,-1.8535) circle (0.75pt);
    \filldraw[black] (1.6535,-1.1465) circle (0.75pt);
    \filldraw[black] (2.3605,-1.1465) circle (0.75pt); 

    \draw  (3.8605,-1.5) -- (3.3605,-1.5);
    \draw  (3.8605,-1.5) -- (4.3605,-1.5);
    \draw (3.8605,-1.5) -- (3.8605,-2);
    \draw (3.8605,-1.5) -- (3.8605,-1);
    \draw (4.3605,-1.5) -- (4.8605,-1.5);
    \draw (4.8605,-1.5) -- (5.3605,-1.5);
    \node[anchor=center] at (4.3605,-2.5) {$T_{7,5}$};
    \draw  (3.8605,-1.5) ellipse (0.075 and 0.075);
    \filldraw[black] (3.3605,-1.5) circle (0.75pt);
    \filldraw[black] (3.8605,-1.5) circle (0.75pt);
    \filldraw[black] (4.3605,-1.5) circle (0.75pt);
    \filldraw[black] (4.8605,-1.5) circle (0.75pt);
    \filldraw[black] (5.3605,-1.5) circle (0.75pt);
    \filldraw[black] (3.8605,-2) circle (0.75pt);
    \filldraw[black] (3.8605,-1) circle (0.75pt);

    \draw (1.933-2.5-2-0.433,3-4.5-2-0.25) -- (1.5-2.5-2-0.433,2.75-4.5-2-0.25);
    \draw (1.933-2.5-2-0.433,3-4.5-2-0.25) -- (1.933-2.5-2-0.433,3.5-4.5-2-0.25);
    \draw (1.933-2.5-2-0.433,3-4.5-2-0.25) -- (2.366-2.5-2-0.433,2.75-4.5-2-0.25);
    \draw (1.067-2.5-2-0.433,2.5-4.5-2-0.25) -- (1.5-2.5-2-0.433,2.75-4.5-2-0.25);
    \draw (2.799-2.5-2-0.433,2.5-4.5-2-0.25) -- (2.366-2.5-2-0.433,2.75-4.5-2-0.25);
    \draw (2.799-2.5-2-0.433,2.5-4.5-2-0.25) -- (3.232-2.5-2-0.433,2.25-4.5-2-0.25);
    \node[anchor=center] at (1.933-2.5-2-0.433,2.5-4.5-2-0.5) {$T_{7,6}$};
    \filldraw[black] (1.5-2.5-2-0.433,2.75-4.5-2-0.25) circle (0.75pt);
    \filldraw[black] (1.933-2.5-2-0.433,3-4.5-2-0.25) circle (0.75pt);
    \draw  (1.933-2.5-2-0.433,3-4.5-2-0.25) ellipse (0.075 and 0.075);
    \filldraw[black] (1.933-2.5-2-0.433,3.5-4.5-2-0.25) circle (0.75pt);
    \filldraw[black] (2.366-2.5-2-0.433,2.75-4.5-2-0.25) circle (0.75pt);
    \filldraw[black] (1.067-2.5-2-0.433,2.5-4.5-2-0.25) circle (0.75pt);
    \filldraw[black] (2.799-2.5-2-0.433,2.5-4.5-2-0.25) circle (0.75pt);
    \filldraw[black] (3.232-2.5-2-0.433,2.25-4.5-2-0.25) circle (0.75pt);

    \draw  (-1.5+1.232-0.433,1.3535-4.5-0.25) -- (-1.1464+1.232-0.433,1-4.5-0.25);
    \draw  (-1.5+1.232-0.433,1-0.3535-4.5-0.25) -- (-1.1464+1.232-0.433,1-4.5-0.25);
    \draw  (-1.1464+1.232-0.433,1-4.5-0.25) -- (-0.6464+1.232-0.433,1-4.5-0.25);
    \draw  (-0.6464+1.232-0.433,1-4.5-0.25) -- (-0.1464+1.232-0.433,1-4.5-0.25);
    \draw  (-0.1464+1.232-0.433,1-4.5-0.25) -- (-0.1464+1.232+0.5-0.433,1-4.5-0.25);
    \draw  (-0.1464+1.232+0.5-0.433,1-4.5-0.25) -- (-0.1464+1.232+1-0.433,1-4.5-0.25);
    \node[anchor=center] at (-0.1464+1.232+1-0.433-1.125,2.5-4.5-2-0.5) {$T_{7,7}$};
    \draw  (-1.1464+1.232-0.433,1-4.5-0.25) ellipse (0.075 and 0.075);
    \filldraw[black] (-1.5+1.232-0.433,1.3535-4.5-0.25) circle (0.75pt);
    \filldraw[black] (-1.5+1.232-0.433,1-0.3535-4.5-0.25) circle (0.75pt);
    \filldraw[black] (-1.1464+1.232-0.433,1-4.5-0.25) circle (0.75pt);
    \filldraw[black] (-0.6464+1.232-0.433,1-4.5-0.25) circle (0.75pt);
    \filldraw[black] (-0.1464+1.232-0.433,1-4.50-0.25) circle (0.75pt);
    \filldraw[black] (-0.1464+1.232+0.5-0.433,1-4.5-0.25) circle (0.75pt);
    \filldraw[black] (-0.1464+1.232+1-0.433,1-4.5-0.25) circle (0.75pt);

    \draw (-0.1464+1.232+2-0.433,1-4.5-0.25) -- (-0.1464+1.732+2-0.433,1-4.5-0.25);
    \draw (-0.1464+1.232+2.5-0.433,1-4.5-0.25) -- (-0.1464+1.732+2.5-0.433,1-4.5-0.25);
    \draw (-0.1464+1.232+3-0.433,1-4.5-0.25) -- (-0.1464+1.732+3-0.433,1-4.5-0.25);
    \draw (-0.1464+1.232+3.5-0.433,1-4.5-0.25) -- (-0.1464+1.732+3.5-0.433,1-4.5-0.25);
    \draw (-0.1464+1.232+4-0.433,1-4.5-0.25) -- (-0.1464+1.732+4-0.433,1-4.5-0.25);
    \draw (-0.1464+1.232+4.5-0.433,1-4.5-0.25) -- (-0.1464+1.732+4.5-0.433,1-4.5-0.25);
    \node[anchor=center] at (-0.1464+2.732+2-0.433,2.5-4.5-2-0.5) {$T_{7,8}$};
    \filldraw[black] (-0.1464+1.232+2-0.433,1-4.5-0.25) circle (0.75pt);
    \filldraw[black] (-0.1464+1.732+2-0.433,1-4.5-0.25) circle (0.75pt);
    \filldraw[black] (-0.1464+2.232+2-0.433,1-4.5-0.25) circle (0.75pt);
    \filldraw[black] (-0.1464+2.732+2-0.433,1-4.5-0.25) circle (0.75pt);
    \draw (-0.1464+2.732+2-0.433,1-4.5-0.25) ellipse (0.075 and 0.075);
    \filldraw[black] (-0.1464+3.232+2-0.433,1-4.5-0.25) circle (0.75pt);
    \filldraw[black] (-0.1464+3.732+2-0.433,1-4.5-0.25) circle (0.75pt);
    \filldraw[black] (-0.1464+4.232+2-0.433,1-4.5-0.25) circle (0.75pt);

    \end{tikzpicture}}
    \end{figure}

    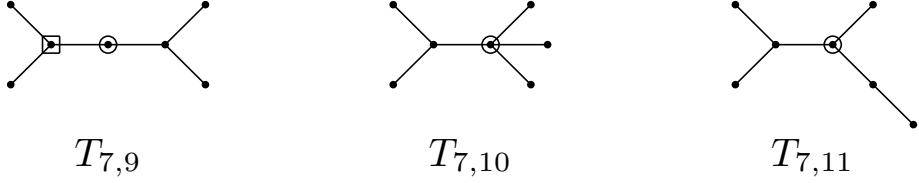
\begin{figure}[H]
    \centering
    \scalebox{1.5}{
    \begin{tikzpicture}

    \begin{scope}[shift={(0.8536,-0.25)}]

    \draw  (-1.5-1.8536-0.5,1.3535-6.5) -- (-1.1464-1.8536-0.5,1-6.5);
    \draw  (-1.5-1.8536-0.5,1-0.3535-6.5) -- (-1.1464-1.8536-0.5,1-6.5);
    \draw  (-1.1464-1.8536-0.5,1-6.5) -- (-0.6464-1.8536-0.5,1-6.5);
    \draw  (-0.6464-1.8536-0.5,1-6.5) -- (-0.1464-1.8536-0.5,1-6.5);
    \draw  (-0.1464-1.8536-0.5,1-6.5) -- (-0.1464-1.8536-0.5+0.3535,1-6.5+0.3535);
    \draw  (-0.1464-1.8536-0.5,1-6.5) -- (-0.1464-1.8536-0.5+0.3535,1-6.5-0.3535);
    \node[anchor=center] at (-0.6464-1.8536-0.5,-6.5) {$T_{7,9}$};
    \filldraw[black] (-1.5-1.8536-0.5,1.3535-6.5) circle (0.75pt);
    \filldraw[black] (-1.5-1.8536-0.5,1-0.3535-6.5) circle (0.75pt);
    \filldraw[black] (-1.1464-1.8536-0.5,1-6.5) circle (0.75pt);
    \begin{scope}[shift={(-1.1464-1.8536-0.5,1-6.5)}]
        \draw (-0.075,0.075) -- (0.075,0.075);
        \draw (0.075,0.075) -- (0.075,-0.075);
        \draw (0.075,-0.075) -- (-0.075,-0.075);
        \draw (-0.075,-0.075) -- (-0.075,0.075);
    \end{scope}
    \filldraw[black] (-0.6464-1.8536-0.5,1-6.5) circle (0.75pt);
    \draw (-0.6464-1.8536-0.5,1-6.5) ellipse (0.075 and 0.075);
    \filldraw[black] (-0.1464-1.8536-0.5,1-6.5) circle (0.75pt);
    \filldraw[black] (-0.1464-1.8536-0.5+0.3535,1-6.5+0.3535) circle (0.75pt);
    \filldraw[black] (-0.1464-1.8536-0.5+0.3535,1-6.5-0.3535) circle (0.75pt);

     \end{scope}

    \begin{scope}[shift={(1.5,-0.25)}]

    \draw  (-1.5+0.3535,1.3535-6.5) -- (-1.1464+0.3535,1-6.5);
    \draw  (-1.5+0.3535,1-0.3535-6.5) -- (-1.1464+0.3535,1-6.5);
    \draw  (-1.1464+0.3535,1-6.5) -- (-0.6464+0.3535,1-6.5);
    \draw  (-0.6464+0.3535,1-6.5) -- (-0.1464+0.3535,1-6.5);
    \draw   (-0.6464+0.3535,1-6.5) --  (-0.6464+0.3535+0.3535,1-6.5+0.3535);
    \draw   (-0.6464+0.3535,1-6.5) -- (-0.6464+0.3535+0.3535,1-6.5-0.3535);
    \node[anchor=center] at (-0.8232+0.3535,-6.5) {$T_{7,10}$};
    \filldraw[black] (-1.5+0.3535,1.3535-6.5) circle (0.75pt);
    \filldraw[black] (-1.5+0.3535,1-0.3535-6.5) circle (0.75pt);
    \filldraw[black] (-1.1464+0.3535,1-6.5) circle (0.75pt);
    \filldraw[black] (-0.6464+0.3535,1-6.5) circle (0.75pt);
    \draw (-0.6464+0.3535,1-6.5) ellipse (0.075 and 0.075);
    \filldraw[black] (-0.1464+0.3535,1-6.5) circle (0.75pt);
    \filldraw[black] (-0.6464+0.3535+0.3535,1-6.5+0.3535) circle (0.75pt);
    \filldraw[black] (-0.6464+0.3535+0.3535,1-6.5-0.3535) circle (0.75pt);

    \end{scope}

     \begin{scope}[shift={(2.5,-0.25)}]

    \draw  (-1.5+0.3535+2,1.3535-6.5) -- (-1.1464+0.3535+2,1-6.5);
    \draw  (-1.5+0.3535+2,1-0.3535-6.5) -- (-1.1464+0.3535+2,1-6.5);
    \draw  (-1.1464+0.3535+2,1-6.5) -- (-0.6464+0.3535+2,1-6.5);
    \draw   (-0.6464+0.3535+2,1-6.5) --  (-0.6464+0.3535+0.3535+2,1-6.5+0.3535);
    \draw   (-0.6464+0.3535+2,1-6.5) -- (-0.6464+0.3535+0.3535+2,1-6.5-0.3535);
    \draw   (-0.6464+0.3535+0.3535+2,1-6.5-0.3535) -- (-0.6464+0.3535+0.3535+0.3535+2,1-6.5-0.3535-0.3535);
    \node[anchor=center] at (-0.8232+0.3535+2,-6.5) {$T_{7,11}$};
    \filldraw[black] (-1.5+0.3535+2,1.3535-6.5) circle (0.75pt);
    \filldraw[black] (-1.5+0.3535+2,1-0.3535-6.5) circle (0.75pt);
    \filldraw[black] (-1.1464+0.3535+2,1-6.5) circle (0.75pt);
    \filldraw[black] (-0.6464+0.3535+2,1-6.5) circle (0.75pt);
    \draw (-0.6464+0.3535+2,1-6.5) ellipse (0.075 and 0.075);
    \filldraw[black] (-0.6464+0.3535+0.3535+2,1-6.5+0.3535) circle (0.75pt);
    \filldraw[black] (-0.6464+0.3535+0.3535+2,1-6.5-0.3535) circle (0.75pt);
    \filldraw[black] (-0.6464+0.3535+0.3535+0.3535+2,1-6.5-0.3535-0.3535) circle (0.75pt);

  \end{scope}
   
\end{tikzpicture}}
\caption{All trees $T$ with $|T| \in \{1,2,3,4,5,7\}$.}\label{fig:expl}
\end{figure}

For $T \not \in \{T_{7,10},T_{7,11}\}$ let $T'=T_{1,1}$. Then for all vertices $w \in T$, $\left(T' \cup kT \right) + v\mathbf{w}$ contains a copy of $T$ in which the copy of the circled vertex is $v$, where $v$ is the unique vertex of $T_{1,1}$ and $k$ is the degree of the circled vertex, unless $T=T_{7,9}$ and $w$ is a leaf, in which case $\left(T' \cup 3T \right) + v\mathbf{w}$ contains a copy of $T$ in which the copy of the boxed vertex is $v$. \newline

Let $T_6$ be the tree obtained from an edge by attaching two leaves to both endpoints. For $T \in \{T_{7,10},T_{7,11}\}$ let $T'=T_6$. Note that in $T_6$ every vertex is adjacent to a vertex of degree three. Using this fact, it is easy to check that for all vertices $v \in T'$ and $w \in T$, $\left(T' \cup kT \right) + v\mathbf{w}$ contains a copy of $T$ in which the copy of the circled vertex is $v$, where $k=3$ if $T=T_{7,10}$ and $k=2$ if $T=T_{7,11}$.

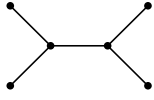
\begin{figure}[H]
\centering
\scalebox{1.5}{
\begin{tikzpicture}

    \filldraw[black] (-1.5+0.3535+2,1.3535-6.5) circle (0.75pt);
    \filldraw[black] (-1.5+0.3535+2,1-0.3535-6.5) circle (0.75pt);
    \filldraw[black] (-1.1464+0.3535+2,1-6.5) circle (0.75pt);
    \filldraw[black] (-0.6464+0.3535+2,1-6.5) circle (0.75pt);
    \filldraw[black] (-0.6464+0.3535+0.3535+2,1-6.5+0.3535) circle (0.75pt);
    \filldraw[black] (-0.6464+0.3535+0.3535+2,1-6.5-0.3535) circle (0.75pt);
    \draw  (-1.5+0.3535+2,1.3535-6.5) -- (-1.1464+0.3535+2,1-6.5);
    \draw  (-1.5+0.3535+2,1-0.3535-6.5) -- (-1.1464+0.3535+2,1-6.5);
    \draw  (-1.1464+0.3535+2,1-6.5) -- (-0.6464+0.3535+2,1-6.5);
    \draw   (-0.6464+0.3535+2,1-6.5) --  (-0.6464+0.3535+0.3535+2,1-6.5+0.3535);
    \draw   (-0.6464+0.3535+2,1-6.5) -- (-0.6464+0.3535+0.3535+2,1-6.5-0.3535);
   
\end{tikzpicture}}
\caption{The tree $T_6$.}\label{fig:expl}
\end{figure}

\end{proof}

We are now ready to prove Theorem \ref{quasfinfam}.

\begin{repth2}\leavevmode

\begin{enumerate}

    \item Let $\mathcal{F}$ be a quasi-finite family of graphs. Then for each integer $r \geq 1$, there is a set $S_r$ of residue classes modulo $r$ and for each residue class $C \in S_r$, there is an integer $k_C$, such that the following hold.

        \begin{enumerate}
        
        \item For every integer $r \geq 1$ and residue class $C \in S_r$,
        
        $$ sat(n,\mathcal{F})= \frac{(r-1)n+k_C}{r} $$
        
        for large enough $n \in C \setminus \bigcup_{q<r, D \in S_q} D$.
        
        \item We have
        
        $$ sat(n,\mathcal{F}) \geq n+o(n) $$
        
        for $n \in \mathbb{Z}_{\geq 0} \setminus \bigcup_{r \geq 1, C \in S_r} C$.

        \item For $r \in \{1,2,3,4,5,7\}$, $S_r$ contains either none or all of the residue classes modulo $r$.
        
    \end{enumerate}

    \item For each integer $r \geq 1$, let $S_r$ be a set of residue classes modulo $r$. Suppose that for $r \in \{1,2,3,4,5,7\}$, $S_r$ contains either none or all of the residue classes modulo $r$. Then there exists a quasi-finite family $\mathcal{F}$ of graphs such that the following hold. 

     \begin{enumerate}
     
        \item For every integer $r \geq 1$, 
        
        $$sat(n,\mathcal{F})= \left(1-\frac{1}{r}\right)n+O(1)$$
        
        for $n \in (\bigcup_{C \in S_r} C) \setminus \bigcup_{q<r, C \in S_q} C$.
        
        \item For $n \in \mathbb{Z}_{\geq 0} \setminus \bigcup_{r \geq 1, C \in S_r} C$,

        $$ sat(n,\mathcal{F}) \geq n+o(n) \ . $$ 
        
        \item If there are only finitely many integers $r \geq 1$ with $(\bigcup_{C \in S_r} C) \setminus \bigcup_{q<r, C \in S_q} C \neq \emptyset$, $\mathcal{F}$ is in fact finite.
        
    \end{enumerate}

\end{enumerate}

\end{repth2}

\begin{proof}

We first prove part 1. Let $\mathcal{F}$ be a quasi-finite family of graphs. Let us call a pair $(H,T)$, where $H$ is a graph and $T$ is a tree, a \emph{blueprint}. \newline

\emph{Claim 1:} For every tree $T$ there is a constant $C_T \geq 0$ such that for every blueprint $(H,T)$, either $H \cup kT$ is $\mathcal{F}$-saturated for every integer $k \geq C_T$ or $H \cup kT$ is not $\mathcal{F}$-saturated for every integer $k \geq C_T$. \newline

\begin{figure}[H]
\centering
\scalebox{1}{
\begin{tikzpicture}

    \draw (0,0) ellipse (0.75 and 0.75);
    \node[anchor=center] at (0,0) {$H$};

    \begin{scope}[shift={(-2.25/2,0)}]
    \draw (4.5,-3) ellipse (0.75 and 0.75);
    \node[anchor=center] at (4.5,-3) {$T$};
    \end{scope}
    
    \begin{scope}[shift={(2.25/2,0)}]
    \draw (-2.25,-3) ellipse (0.75 and 0.75);
    \node[anchor=center] at (-2.25,-3) {$T$};

    \draw (-4.5,-3) ellipse (0.75 and 0.75);
    \node[anchor=center] at (-4.5,-3) {$T$};
    \end{scope}
    
    \filldraw[fill=black!0] (2.25/2,-3) circle (0cm) node[black]{$\cdots$};

    \draw [decorate,decoration={brace,amplitude=10pt,raise=5pt}] (2.25*1.5+0.75,-3.75) -- (-2.25*1.5-0.75,-3.75) node[midway,below=20pt] {$k$};

\end{tikzpicture}}
\caption{The graph $H \cup kT$.}\label{fig:HkT}
\end{figure}
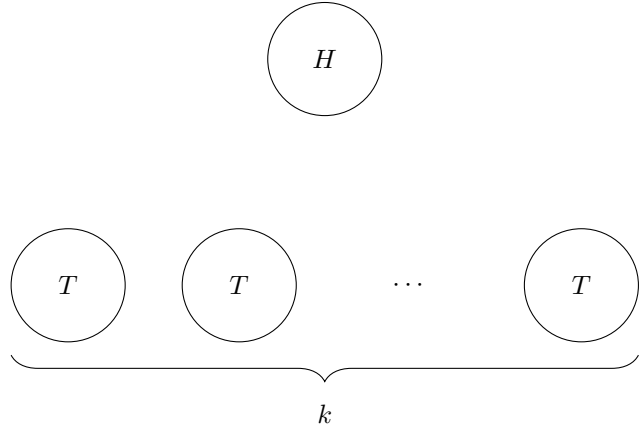

\emph{Proof:} Let $T$ be a tree. Since $\mathcal{F}$ is quasi-finite, for every subtree $T' \subseteq T$ there is a constant $C'_{T'} \geq 0$ such that every graph in $\mathcal{F}$ has at most $C'_{T'}$ connected components isomorphic to $T'$. We first show that for every blueprint $(H,T)$ and integers $k \geq 0$ and $\ell \geq \sum_{T' \subseteq T} C'_{T'}$, if $H \cup kT$ contains a graph $F \in \mathcal{F}$ then so does $H \cup \ell T$. \newline

Consider a copy of $F$ in $H \cup kT$. Every connected component of $F$ must be contained in either $H$ or one of the copies of $T$. Since $T$ is acyclic, every connected component of $F$ contained in a copy of $T$ is isomorphic to a subtree $T' \subseteq T$, so there are at most $\sum_{T' \subseteq T} C'_{T'} \leq \ell$ such connected components. Hence there are at most $\ell$ copies of $T$ containing a connected component of $F$, so $F$ is contained in $H \cup \ell T$. \newline

Now let $C_T=\sum_{T' \subseteq T} C'_{T'} + 2$. Let $(H,T)$ be a blueprint. We need to show that if $H \cup kT$ is $\mathcal{F}$-saturated for some integer $k \geq C_T$, then in fact $H \cup \ell T$ is $\mathcal{F}$-saturated for every integer $\ell \geq C_T$. $H \cup \ell T$ must be $\mathcal{F}$-free, for otherwise $H \cup kT$ would contain a graph in $\mathcal{F}$, since $k \geq C_T \geq \sum_{T' \subseteq T} C'_{T'}$. It remains to show that every graph $\left(H \cup \ell T\right)+vw$ obtained by adding a new edge $vw$ to $H \cup \ell T$ contains a graph in $\mathcal{F}$. \newline

Suppose first that $v,w \in H$. Then $\left(H \cup \ell T\right)+vw=\left(H + vw \right) \cup \ell T$ contains a graph in $\mathcal{F}$, since $\left(H + vw \right) \cup kT=\left(H \cup kT\right)+vw$ contains a graph in $\mathcal{F}$ and $\ell \geq C_T \geq \sum_{T' \subseteq T} C'_{T'}$. Now suppose $v \in H$ and $w$ is in a copy $T_1$ of $T$. Let $w' \in H \cup kT$ be the copy of $w$ in a copy $T_2$ of $T$ (note $k \geq C_T \geq 1$). Then $\left(H \cup \ell T\right)+vw=\left[\left(H \cup T_1\right) + vw \right] \cup (\ell-1)T$ contains a graph in $\mathcal{F}$, since $\left[\left(H \cup T_2\right) + vw' \right] \cup (k-1)T = \left(H \cup kT\right)+vw'$ contains a graph in $\mathcal{F}$ and $\ell-1 \geq C_T-1 \geq \sum_{T' \subseteq T} C'_{T'}$. \newline

Next, suppose $v$ and $w$ are in the same copy $T_1$ of $T$. Let $v', w' \in H \cup kT$ be the copies of $v$ and $w$ in a copy $T_2$ of $T$ (again note $k \geq C_T \geq 1$). Then $\left(H \cup \ell T\right)+vw= H \cup \left(T_1+vw\right) \cup (\ell-1)T$ contains a graph in $\mathcal{F}$, since $H \cup \left(T_2+v'w'\right) \cup (k-1)T = \left(H \cup kT\right)+v'w'$ contains a graph in $\mathcal{F}$ and $\ell-1 \geq C_T-1 \geq \sum_{T' \subseteq T} C'_{T'}$. Finally, suppose $v$ and $w$ are in different copies $T_1$ and $T_2$ of $T$. Let $v',w' \in H \cup kT$ be the copies of $v$ and $w$ in different copies $T_3$ and $T_4$ of $T$ (note $k \geq C_T \geq 2$). Then $\left(H \cup \ell T\right)+vw = H \cup \left[\left(T_1 \cup T_2 \right)+vw \right] \cup ( \ell -2)T$ contains a graph in $\mathcal{F}$, since $H \cup \left[\left(T_3 \cup T_4 \right)+v'w' \right] \cup (k-2)T=\left(H \cup kT \right)+v'w'$ contains a graph in $\mathcal{F}$ and $\ell-2 \geq C_T-2 = \sum_{T' \subseteq T} C'_{T'}$. \qedclaim\newline

Let us call a blueprint $(H,T)$ $\mathcal{F}$-saturated if $H \cup kT$ is $\mathcal{F}$-saturated for every large enough integer $k$. \newline

\emph{Claim 2:} For every integer $r \geq 1$ there is a constant $C_r$ such that every $\mathcal{F}$-saturated graph $G$ on $n$ vertices that is not of the form $H \cup kT$ for some $\mathcal{F}$-saturated blueprint $(H,T)$ with $|T| < r$ and integer $k \geq 0$ has at least $\left(1-1/r \right)n -C_r $ edges. \newline

\emph{Proof:} For each integer $r \geq 1$, let $C_r=\sum_{|T|<r} C_T$ (where the $C_T$ are as in Claim 1). Let $G$ be as in Claim 2. Then for every tree $T$ with $|T|<r$, $G \cong H \cup kT$, where $H$ is the union of all connected components of $G$ that are not isomorphic to $T$ and $k$ is the number of connected components of $G$ that are isomorphic to $T$. Hence the blueprint $(H,T)$ is not $\mathcal{F}$-saturated, so $k<C_T$ by Claim 1.  It follows that $t(G) \leq n/r + C_r$, where $t(G)$ is the number of connected components of $G$ that are trees. For every connected component $C$, $e(C) \geq |C|-1$, with equality if and only if $C$ is a tree. Summing over all $C$, we obtain $e(G) \geq n-t(G) \geq \left(1-1/r \right)n -C_r$. \qedclaim\newline

For each integer $r \geq 1$, let $S_r$ be the set of residue classes $C$ modulo $r$ for which there exists an $\mathcal{F}$-saturated blueprint $(H,T)$ with $|H| \in C$ and $|T|=r$. Note that $|H \cup kT| \equiv |H| \mod |T| $ for every blueprint $(H,T)$ and integer $k \geq 0$. Hence, by Claim 2, $sat(n,\mathcal{F}) \geq \left(1-1/r \right)n - C_r $ for every integer $r \geq 1$ and $n \in \mathbb{Z}_{\geq 0} \setminus \bigcup_{q<r,C \in S_q} C $. Since $r$ is arbitrary, this proves part (b). \newline

We now prove part (a). Let $r \geq 1$ be an integer and $C \in S_r$ be a residue class modulo $r$. If $C \setminus \bigcup_{q<r, D \in S_q} D = \emptyset$, part (a) is vacuously true (so let $k_C$ be any integer). Otherwise, let $k_C$ be the minimum of $re(H)-(r-1)|H|$ over all $\mathcal{F}$-saturated blueprints $(H,T)$ with $|H| \in C$ and $|T|=r$. We need to show that this quantity is bounded below, so that the minimum is well-defined. \newline

Let $(H,T)$ be an $\mathcal{F}$-saturated blueprint with $|H| \in C$ and $|T|=r$. Since $C \setminus \bigcup_{q<r, D \in S_q} D$ is non-empty and a union of residue classes modulo the lowest common multiple of the integers $1 \leq q \leq r$, it contains arbitrarily large integers. So pick an integer $n \in C \setminus \bigcup_{q<r, D \in S_q} D$ with $n \geq |H|+ rC_T$ (where $C_T$ is as in Claim 1). Then $n=|H|+kr$ for some integer $k \geq C_T$. But then $n \in \mathbb{Z}_{\geq 0} \setminus \bigcup_{q<r,D \in S_q} D $ and by Claim 1, $H \cup kT$ is $\mathcal{F}$-saturated, so 

$$\left(1-\frac{1}{r} \right)n -C_r \leq sat(n,\mathcal{F}) \leq e\left(H \cup kT \right) = \left(1-\frac{1}{r} \right)n + \frac{re(H)-(r-1)|H|}{r} \ , $$

so $re(H)-(r-1)|H| \geq -rC_r$. \newline

Now let $(H,T)$ be an $\mathcal{F}$-saturated blueprint with $|H| \in C$ and $|T|=r$ that minimises $re(H)-(r-1)|H|$, i.e. with $re(H)-(r-1)|H|=k_C$. Then by Claim 1, $H \cup kT$ is $\mathcal{F}$-saturated for every integer $k \geq C_T$, so there exist $\mathcal{F}$-saturated graphs with $n$ vertices and $\left[(r-1)n+k_C \right]/r$ edges for every integer $|H|+rC_T \leq n \in C$. \newline

It remains to show that every $\mathcal{F}$-saturated graph $G$ on $n$ vertices has at least $\left[(r-1)n+k_C \right]/r$ edges for large enough $n \in C \setminus \bigcup_{q<r, D \in S_q} D$. If $G$ is not of the form $H \cup kT$ for some $\mathcal{F}$-saturated blueprint $(H,T)$ with $|T|=r$ and integer $k \geq 0$, then by Claim 2, 

$$ e(G) \geq \left(1- \frac{1}{r+1} \right) n - C_{r+1} > \frac{(r-1)n+k_C }{r} $$

if $n$ is large enough. If $G$ is of the form $H \cup kT$ for some $\mathcal{F}$-saturated blueprint $(H,T)$ with $|T|=r$ and integer $k \geq 0$, 

$$ e(G) = \frac{(r-1)n+ \left[ re(H)-(r-1)|H| \right] }{r}  \geq \frac{(r-1)n+k_C }{r} $$ 

by the definition of $k_C$. \newline

Next, we prove part (c). \newline

\emph{Claim 3:} Let $(H,T)$ be an $\mathcal{F}$-saturated blueprint and $T' \subseteq T$ be a subtree such that for all vertices $v \in T'$ and $w \in T$, $\left(T' \cup kT \right) + v\mathbf{w}$ contains a copy of $T$ that contains $v$ for some integer $k \geq 0$ (where $\left(T' \cup kT \right) + v\mathbf{w}$ is as in Lemma \ref{expl}). Then there exists an $\mathcal{F}$-saturated blueprint $(H',T)$ with $|H'| \equiv |H|+|T'| \mod |T|$. \newline

\emph{Proof:} For each $v \in T'$ and $w \in T$, let $k_{v,w} \geq 0$ be an integer such that $\left(T' \cup k_{v,w}T \right) + v\mathbf{w}$ contains a copy of $T$ that contains $v$. Let $k \geq C_T + \sum_{v \in T', w \in T} k_{v,w}$ be an integer (where $C_T$ is as in Claim 1). Then $H \cup kT \cup T' \subseteq H \cup (k+1)T$ and $ H \cup (k+1)T$ is $\mathcal{F}$-free by Claim 1, since $k+1 \geq C_T$, so $H \cup kT \cup T'$ is $\mathcal{F}$-free. Hence we can add edges to $H \cup kT \cup T'$ to obtain an $\mathcal{F}$-saturated graph $G$. $H \cup kT$ is $\mathcal{F}$-saturated by Claim 1, since $k \geq C_T$, so all new edges in $G$ are incident to $T'$. \newline

Let $v \in T'$ and $w \in T$. Suppose for the sake of contradiction that $v$ is adjacent to more than $k_{v,w}$ copies of $w$ in $G$. Then $G \supseteq H \cup \left( \left[T' \cup \left(k_{v,w}+1\right)T \right] + v\mathbf{w} \right) \cup (k-k_{v,w}-1) T$. But $\left(T' \cup k_{v,w}T \right) + v\mathbf{w}$ contains a copy of $T$ that contains $v$, so $ \left[T' \cup (k_{v,w}+1)T \right] + v\mathbf{w} \supseteq 2T+e$ for some edge $e$ between the two copies of $T$ in $2T$. Hence $ G \supseteq H \cup \left(2T+e\right) \cup (k-k_{v,w}-1) T = \left[H \cup (k-k_{v,w}+1)T \right] + e$. But $\left[H \cup (k-k_{v,w}+1)T \right] + e$ contains a graph in $\mathcal{F}$ by Claim 1, since $k-k_{v,w}+1 \geq C_T$, so $G$ contains a graph in $\mathcal{F}$, a contradiction. \newline

Hence, for all vertices $v \in T'$ and $w \in T$, $v$ is adjacent to at most $k_{v,w}$ copies of $w$ in $G$, so there are at most $\sum_{v \in T', w \in T} k_{v,w}$ copies of $T$ in $G$ with some edge between them and $T'$. Let $H'$ be the subgraph of $G$ induced by $H$, $T'$ and these copies of $T$. Note that $|H'| \equiv |H|+|T'| \mod |T|$. Then $G=H' \cup \ell T$ for some integer $\ell \geq k-\sum_{v \in T', w \in T} k_{v,w} \geq C_T$, so $(H',T)$ is an $\mathcal{F}$-saturated blueprint by Claim 1. \qedclaim\newline

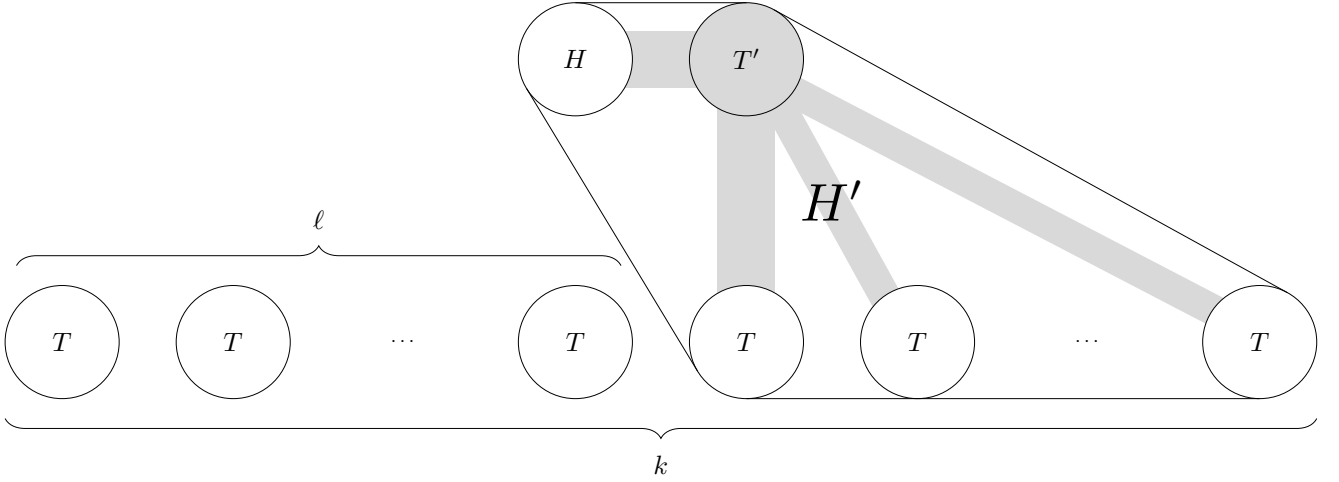
\begin{figure}[H]
\centering
\scalebox{0.75}{
\begin{tikzpicture}

\fill[black!15] (0.5,0.5) --(2.5,0.5)--(2.5,-0.5)--(0.5,-0.5); 
\fill[black!15] (2.5,-0.5) --(2.5,-4.5)--(3.5,-4.5)--(3.5,-0.5); 
\fill[black!15] (0.5,0.5) --(2.5,0.5)--(2.5,-0.5)--(0.5,-0.5); 
\fill[black!15] ($ (6,-5)+(100:0.5) $) -- ($ (6,-5)+(170:0.5) $) -- ($ (3,0)+(-80:0.5) $) -- ($ (3,0)+(-10:0.5) $); 
\fill[black!15] ($ (12,-5)+(100:0.5) $) -- ($ (12,-5)+(170:0.5) $) -- ($ (3,0)+(-80:0.5) $) -- ($ (3,0)+(-10:0.5) $);

\filldraw[fill=black!0] (0,0) circle (1);
\node[anchor=center] at (0,0) {\scalebox{1.3333}{$H$}};
\filldraw[fill=black!15] (3,0) circle (1);
\node[anchor=center] at (3,0) {\scalebox{1.3333}{$T'$}};
\draw (-9,-5) circle (1);
\node[anchor=center] at (-9,-5) {\scalebox{1.3333}{$T$}};
\draw (-6,-5) circle (1);
\node[anchor=center] at (-6,-5) {\scalebox{1.3333}{$T$}};
\draw (0,-5) circle (1);
\node[anchor=center] at (0,-5) {\scalebox{1.3333}{$T$}};
\filldraw[fill=black!0] (3,-5) circle (1);
\node[anchor=center] at (3,-5) {\scalebox{1.3333}{$T$}};
\filldraw[fill=black!0] (6,-5) circle (1);
\node[anchor=center] at (6,-5) {\scalebox{1.3333}{$T$}};
\filldraw[fill=black!0] (12,-5) circle (1);    
\node[anchor=center] at (12,-5) {\scalebox{1.3333}{$T$}};
\node[anchor=center] at (-3,-5) {$\cdots$};
\node[anchor=center] at (9,-5) {$\cdots$};
\draw [decorate,decoration={brace,amplitude=10pt,raise=10pt}] (13,-6) -- (-10,-6) node[midway,below=25pt] {\scalebox{1.3333}{$k$}};
\draw [decorate,decoration={brace,amplitude=10pt,raise=10pt}] (-9.8,-4) -- (0.8,-4) node[midway,above=25pt] {\scalebox{1.3333}{$\ell$}};
\draw (210.963756532:1) -- ($ (3,-5)+(210.963756532:1) $);
\draw (0,1) -- (3,1);
\draw (3,-6) -- (12,-6);
\draw ($ (3,0)+(60.9454:1) $) -- ($ (12,-5)+(60.9454:1) $);
\node[anchor=center] at (4.5,-2.5) {\scalebox{2.66666}{$H'$}};

\end{tikzpicture}}
\caption{The graph $G$ and its induced subgraph $H'$. The shading denotes the presence of new edges.}\label{fig:GH'}
\end{figure}

For all positive, coprime integers $a$ and $b$, every integer is congruent to a positive multiple of $a$ modulo $b$. Hence part (c) follows from Lemma \ref{expl} by repeatedly applying Claim 3. \newline

We now prove part 2. We first define, for each integer $r \geq 1$, a tree $T_r$ and a family of graphs $\mathcal{F}_r$ with the following properties.

\begin{enumerate}
    \item $|T_r|=r$.
    \item $\mathcal{F}_r$ is finite.
    \item Every graph in $\mathcal{F}_r$ is connected.
    \item $T_r$ is $\mathcal{F}_r$-free.
    \item Every graph $T_r+e$ obtained by adding a new edge $e$ to $T_r$ and every graph $2T_r+e$ obtained from $2T_r$ by adding an edge $e$ between the two copies of $T_r$ contains a graph in $\mathcal{F}_r$.
\end{enumerate}

For each $r \in \{1,2,3,4,5,7\}$, let $T_r$ be any tree with $|T_r|=r$ and $\mathcal{F}_r$ be the family containing every graph $T_r+e$ obtained by adding a new edge $e$ to $T_r$ and every graph  $2T_r+e$ obtained from $2T_r$ by adding an edge $e$ between the two copies of $T_r$. Note that properties 1 through 5 hold for every $r \in \{1,2,3,4,5,7\}$. \newline

For each even integer $r \geq 6$, let $T_r$ be the tree obtained from an edge by attaching $r/2-1$ leaves to both endpoints and $\mathcal{F}_r'$ be the family of graphs in which every vertex is adjacent to a vertex of degree at least $r/2$. For each odd integer $r \geq 9$, let $T_r$ be the tree obtained from a three vertex path by attaching two leaves to one of the endpoints, one leaf to the central vertex and $r-6$ leaves to the other endpoint and $\mathcal{F}_r'$ be the family of graphs in which every vertex $v$ is either adjacent to a vertex of degree at least $r-5$ or adjacent to a vertex of degree at least three which is adjacent to a vertex distinct from $v$ of degree at least three. Note that property 1 holds for every $r \in \{6\} \cup \mathbb{Z}_{\geq 8}$. \newline

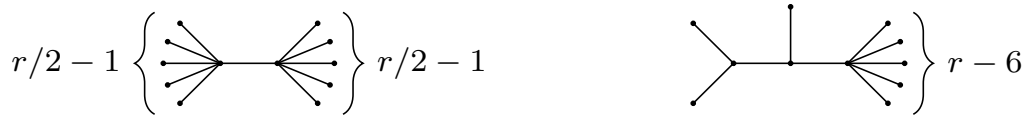
\begin{figure}[H]
\centering
\scalebox{1.5}{
\begin{tikzpicture}

    \filldraw[black] (-1.1464+0.3535+2,1-6.5) circle (0.5pt);
    \filldraw[black] (-1.1464+0.3535+2,1-6.5)+(180+45/2:0.5) circle (0.5pt);
    \filldraw[black] (-1.1464+0.3535+2,1-6.5)+(180-45/2:0.5) circle (0.5pt);
    \filldraw[black] (-1.1464+0.3535+2,1-6.5)+(180+45:0.5) circle (0.5pt);
    \filldraw[black] (-1.1464+0.3535+2,1-6.5)+(180-45:0.5) circle (0.5pt);
    \filldraw[black] (-1.1464+0.3535+2,1-6.5)+(180:0.5) circle (0.5pt);

    \draw  (-1.1464+0.3535+2,1-6.5) -- ($ (-1.1464+0.3535+2,1-6.5)+(180+45/2:0.5) $);
    \draw   (-1.1464+0.3535+2,1-6.5) --  ($ (-1.1464+0.3535+2,1-6.5)+(180-45/2:0.5) $);
    \draw   (-1.1464+0.3535+2,1-6.5) -- ($ (-1.1464+0.3535+2,1-6.5)+(180+45:0.5)  $);
    \draw   (-1.1464+0.3535+2,1-6.5) -- ($ (-1.1464+0.3535+2,1-6.5)+(180-45:0.5) $);
    \draw (-1.1464+0.3535+2,1-6.5) --   ($ (-1.1464+0.3535+2,1-6.5)+(180:0.5) $);

    \filldraw[black] (-0.6464+0.3535+2,1-6.5)+(45/2:0.5) circle (0.5pt);
    \filldraw[black] (-0.6464+0.3535+2,1-6.5)+(-45/2:0.5) circle (0.5pt);
    \filldraw[black] (-1.1464+0.3535+2+1,1-6.5) circle (0.5pt);
    \filldraw[black] (-0.6464+0.3535+2,1-6.5) circle (0.5pt);
    \filldraw[black] (-0.6464+0.3535+0.3535+2,1-6.5+0.3535) circle (0.5pt);
    \filldraw[black] (-0.6464+0.3535+0.3535+2,1-6.5-0.3535) circle (0.5pt);

    \draw  (-1.1464+0.3535+2,1-6.5) -- (-0.6464+0.3535+2,1-6.5);
    \draw   (-0.6464+0.3535+2,1-6.5) --  (-0.6464+0.3535+0.3535+2,1-6.5+0.3535);
    \draw   (-0.6464+0.3535+2,1-6.5) -- (-0.6464+0.3535+0.3535+2,1-6.5-0.3535);
    \draw   (-1.1464+0.3535+2+1,1-6.5) -- (-1.1464+0.3535+2+0.5,1-6.5);
    \draw (-0.6464+0.3535+2,1-6.5)+(45/2:0.5) --  (-0.6464+0.3535+2,1-6.5);
    \draw (-0.6464+0.3535+2,1-6.5)+(-45/2:0.5) --  (-0.6464+0.3535+2,1-6.5);

    \draw [decorate,decoration={brace,amplitude=5pt,raise=5pt}] (-1.1464+0.3535+2-0.4,1-6.5-0.45) -- (-1.1464+0.3535+2-0.4,1-6.5+0.45) node[midway,left=10pt] {\footnotesize $r/2-1$};

     \draw [decorate,decoration={brace,amplitude=5pt,raise=5pt}] (-1.1464+0.3535+2+0.4+0.5,1-6.5+0.45) -- (-1.1464+0.3535+2+0.4+0.5,1-6.5-0.45) node[midway,right=10pt] {\footnotesize $r/2-1$};

    \begin{scope}[shift={(5,0)}]

        \begin{scope}[shift={(-0.5,0)}]
    
        \filldraw[black] (-1.5+0.3535+2,1.3535-6.5) circle (0.5pt);
        \filldraw[black] (-1.5+0.3535+2,1-0.3535-6.5) circle (0.5pt);
        \filldraw[black] (-1.1464+0.3535+2,1-6.5) circle (0.5pt);
        \draw  (-1.5+0.3535+2,1.3535-6.5) -- (-1.1464+0.3535+2,1-6.5);
        \draw  (-1.5+0.3535+2,1-0.3535-6.5) -- (-1.1464+0.3535+2,1-6.5);
        \draw  (-1.1464+0.3535+2,1-6.5) -- (-0.6464+0.3535+2,1-6.5);
        \filldraw[black] (-1.1464+0.3535+2+0.5,1-6.5) circle (0.5pt);
        \end{scope}
    
    \filldraw[black] (-1.1464+0.3535+2+1,1-6.5) circle (0.5pt);
    \filldraw[black] (-0.6464+0.3535+2,1-6.5) circle (0.5pt);
    \filldraw[black] (-0.6464+0.3535+0.3535+2,1-6.5+0.3535) circle (0.5pt);
    \filldraw[black] (-0.6464+0.3535+0.3535+2,1-6.5-0.3535) circle (0.5pt);
    \filldraw[black]  (-1.1464+0.3535+2,1-6) circle (0.5pt);
    \filldraw[black] (-0.6464+0.3535+2,1-6.5)+(45/2:0.5) circle (0.5pt);
    \filldraw[black] (-0.6464+0.3535+2,1-6.5)+(-45/2:0.5) circle (0.5pt);
    \draw   (-0.6464+0.3535+2,1-6.5) --  (-0.6464+0.3535+0.3535+2,1-6.5+0.3535);
    \draw   (-0.6464+0.3535+2,1-6.5) -- (-0.6464+0.3535+0.3535+2,1-6.5-0.3535);
    \draw   (-1.1464+0.3535+2+1,1-6.5) -- (-1.1464+0.3535+2+0.5,1-6.5);
    \draw   (-1.1464+0.3535+2,1-6.5) --  (-1.1464+0.3535+2+0.5,1-6.5); 
    \draw   (-1.1464+0.3535+2,1-6.5) --  (-1.1464+0.3535+2,1-6); 
    \draw (-0.6464+0.3535+2,1-6.5)+(45/2:0.5) --  (-0.6464+0.3535+2,1-6.5);
    \draw (-0.6464+0.3535+2,1-6.5)+(-45/2:0.5) --  (-0.6464+0.3535+2,1-6.5);
    \draw [decorate,decoration={brace,amplitude=5pt,raise=5pt}] (-1.1464+0.3535+2+1-0.1,1-6.5+0.45)  -- (-1.1464+0.3535+2+1-0.1,1-6.5-0.45)  node[midway,right=10pt] {\footnotesize $r-6$};

    \end{scope}

\end{tikzpicture}}
\caption{The tree $T_r$. The cases $r \geq 6$ even and $r \geq 9$ odd are on the left and right, respectively.}\label{fig:expl}
\end{figure}

Let $r \in \{6\} \cup \mathbb{Z}_{\geq 8}$. Let $\mathcal{F}_r$ be the family of minimal connected graphs in $\mathcal{F}_r' \setminus \{T_r\}$, that is, connected graphs in $\mathcal{F}_r' \setminus \{T_r\}$ that do not contain a smaller connected graph in $\mathcal{F}_r' \setminus \{T_r\}$. Then clearly property 3 holds. A graph is in $\mathcal{F}_r'$ if and only if all of its connected components are in $\mathcal{F}_r'$, every graph obtained by adding edges to a graph in $\mathcal{F}_r'$ is also in $\mathcal{F}_r'$ and one can check that the subgraphs of $T_r$ in $\mathcal{F}_r'$ are precisely the subgraph with no vertices and $T_r$ itself, which together imply properties 4 and 5. It remains to show property 2 holds. \newline

 \emph{Claim 4:} For all $r \in \{6\} \cup \mathbb{Z}_{\geq 8}$ and $v \in G \in \mathcal{F}_r'$ there is a set $S$ of vertices in $G$ with $v \in S$, $G[S]$ connected, $G[S] \in \mathcal{F}_r'$ and $|S| \leq 2r$. \newline

\emph{Proof:} Suppose first that $r \geq 6$ is even. Then $v$ is adjacent to a vertex $a$ of degree at least $r/2$ and $a$ is adjacent to a vertex $b$ of degree at least $r/2$. Pick sets $\{v,b\} \subseteq A \subseteq \Gamma(a)$ and $\{a\} \subseteq B \subseteq \Gamma(b)$ of size $r/2$. Then $S=A \cup B$ is the desired set. Now suppose $r \geq 9$ is odd. Suppose first that $v$ is adjacent to a vertex $a$ of degree at least $r-5$. If $a$ is adjacent to a vertex $b$ of degree at least $r-5$, pick sets $\{v,b\} \subseteq A \subseteq \Gamma(a)$ and $\{a\} \subseteq B \subseteq \Gamma(b)$ of size $r-5$. Then $S=A \cup B$ is the desired set. If $a$ is adjacent to a vertex $b$ of degree at least three which is adjacent to a vertex $c \neq a$ of degree at least three, pick a set $\{v,b\} \subseteq A \subseteq \Gamma(a)$ of size $r-5$ and sets $\{a,c\} \subseteq B \subseteq \Gamma(b)$ and $C \subseteq \Gamma(c)$ of size three. Then $S=A \cup B \cup C$ is the desired set. \newline

Now suppose $v$ is adjacent to a vertex $a$ of degree at least three which is adjacent to a vertex $b$ of degree at least three.  Let $w \in \{a,b\}$. If $w$ is adjacent to a vertex $c_w$ of degree at least $r-5$, pick a set $S_w \subseteq \Gamma(c_w)$ of size $r-5$. If $w$ is adjacent to a vertex $c_w$ of degree at least three which is adjacent to a vertex $d_w \neq w$ of degree at least three, pick sets $\{d_w\} \subseteq C_w \subseteq \Gamma(c_w)$ and $D_w \subseteq \Gamma(d_w)$ of size three and let $S_w=C_w \cup D_w$. Now pick sets $\{v,b,c_a\} \subseteq A \subseteq \Gamma(a)$ and $\{a,c_b\} \subseteq B \subseteq \Gamma(b)$ of size three. Then $S=A \cup B \cup S_a \cup S_b$ is the desired set.  \qedclaim\newline

\emph{Claim 5:} For all $r \in \{6\} \cup \mathbb{Z}_{\geq 8}$, every graph in $\mathcal{F}_r$ has at most $2r$ vertices. In particular, $\mathcal{F}_r$ is finite. \newline

\emph{Proof:} Let $G \in \mathcal{F}_r$. By Claim 4, for every vertex $v \in G$ there is a set $S_v$ of vertices in $G$ with $v \in S_v$, $G[S_v]$ connected, $G[S_v] \in \mathcal{F}_r'$ and $|S_v| \leq 2r$. If for some vertex $v$, $G[S_v]$ is not isomorphic to $T_r$, by the minimality of $G$, $S_v$ is the entire vertex set of $G$ and hence $|G| \leq 2r$. So suppose $G[S_v]$ is isomorphic to $T_r$ for every vertex $v$. If for some vertices $v$ and $w$, $S_v$ and $S_w$ are distinct and intersect, $G[S_v \cup S_w]$ is a connected graph in $\mathcal{F}_r' \setminus \{T_r\}$, so by the minimality of $G$, $S_v \cup S_w$ is the entire vertex set of $G$ and hence $|G| < 2r$ by property 1. So suppose the $S_v$ partition the vertex set of $G$. If there is an edge between two different parts $S_v$ and $S_w$, $G[S_v \cup S_w]$ is a connected graph in $\mathcal{F}_r' \setminus \{T_r\}$, so by the minimality of $G$, $S_v \cup S_w$ is the entire vertex set of $G$ and hence $|G|=2r$ by property 1. So suppose $G$ is a disjoint union of copies of $T_r$. This is impossible, since $G$ is connected and not isomorphic to $T_r$. \qedclaim\newline

For each integer $r \geq 1$, let $S_r$ be a set of residue classes modulo $r$. Suppose that for $r \in \{1,2,3,4,5,7\}$, $S_r$ contains either none or all of the residue classes modulo $r$. We need to construct a quasi-finite family $\mathcal{F}$ of graphs such that (a), (b) and (c) hold. For each integer $r \geq 1$, let $S_r'$ be the set of residue classes $C \in S_r$ with $C \setminus \bigcup_{q<r, D \in S_q} D \neq \emptyset$. Pick an integer $3 \leq k_C \in C$ for each integer $r \geq 1$ and residue class $C \in S_r'$ such that the $k_C$ are distinct. This is possible, since if we first pick $k_C$ for each $C \in S_1'$, then pick $k_C$ for each $C \in S_2'$, then pick $k_C$ for each $C \in S_3'$ and so on, then at each step we can let $k_C$ be any large enough integer in $C$. For each integer $k \geq 3$, let $C_k$ be the cycle on $k$ vertices. Let $\mathcal{C}$ be the family containing $C_{k_C}$ for every integer $r \geq 1$ and residue class $C \in S_r'$. For each integer $r \geq 1$ and residue class $C \in S_r'$, let $\mathcal{F}_{C}$ be the family containing the following graphs.

\begin{enumerate}
    \item $C_{k_C} \cup H$ for every graph $H \in \mathcal{F}_r$.
    \item Every graph $C_{k_C}+e$ obtained by adding a new edge $e$ to $C_{k_C}$.
    \item The graph obtained from $C_{k_C}$ by adding a new vertex and joining it to a vertex in  $C_{k_C}$.
    \item  $C_{k_C} \cup H$ for every graph $H \in \mathcal{C}$.
\end{enumerate}

\begin{figure}[H]
\centering
\begin{tikzpicture}

    \draw (-2.5,0) ellipse (0.75 and 0.75);
    \node[anchor=center] at (-2.5,0) {$k_C$};
    \filldraw[black] (-2.5,0.75) circle (0.75pt);
    \filldraw[black] (-2.5,0)+(90+360/5:0.75) circle (0.75pt);
    \filldraw[black] (-2.5,0)+(90+2*360/5:0.75) circle (0.75pt);
    \filldraw[black] (-2.5,0)+(90+3*360/5:0.75) circle (0.75pt);
    \filldraw[black] (-2.5,0)+(90+4*360/5:0.75) circle (0.75pt);
    \draw (-0.5,0) ellipse (0.75 and 0.75);
    \node[anchor=center] at (-0.5,0) {$H$};
    \node[anchor=north] at (-1.5,-1.5) {Type 1};

    \begin{scope}[shift={(6.5,0)}]
    \draw (-2.5,0) ellipse (0.75 and 0.75);
    \node[anchor=center] at (-2.5,0) {$k_C$};
    \filldraw[black] (-2.5,0.75) circle (0.75pt);
    \filldraw[black] (-2.5,0)+(90+360/5:0.75) circle (0.75pt);
    \filldraw[black] (-2.5,0)+(90+2*360/5:0.75) circle (0.75pt);
    \filldraw[black] (-2.5,0)+(90+3*360/5:0.75) circle (0.75pt);
    \filldraw[black] (-2.5,0)+(90+4*360/5:0.75) circle (0.75pt);
    \draw (-2.5,0)+(90+360/5:0.75) -- ($ (-2.5,0) + (306:0.75) $);
    \end{scope}
    \node[anchor=north] at (4,-1.5) {Type 2};

    \end{tikzpicture}
    \end{figure}

    \begin{figure}[H]
    \centering
    \begin{tikzpicture}

    \begin{scope}[shift={(1,-4)}]
    \draw (-2.5,0) ellipse (0.75 and 0.75);
    \node[anchor=center] at (-2.5,0) {$k_C$};
    \filldraw[black] (-2.5,0.75) circle (0.75pt);
    \filldraw[black] (-2.5,0)+(90+360/5:0.75) circle (0.75pt);
    \filldraw[black] (-2.5,0)+(90+2*360/5:0.75) circle (0.75pt);
    \filldraw[black] (-2.5,0)+(90+3*360/5:0.75) circle (0.75pt);
    \filldraw[black] (-2.5,0)+(90+4*360/5:0.75) circle (0.75pt);
    \filldraw[black] ($ (-2.5,0)+(90+4*360/5:0.75) $) +(22.5:0.75) circle (0.75pt);
    \draw ($ ($ (-2.5,0)+(90+4*360/5:0.75) $) +(22.5:0.75) $) -- ($ (-2.5,0)+(90+4*360/5:0.75) $);
    \end{scope}
    \node[anchor=north] at (-1.5,-5.5) {Type 3};

    \begin{scope}[shift={(5.5,-4)}]
    \draw (-2.5,0) ellipse (0.75 and 0.75);
    \node[anchor=center] at (-2.5,0) {$k_C$};
    \filldraw[black] (-2.5,0.75) circle (0.75pt);
    \filldraw[black] (-2.5,0)+(90+360/5:0.75) circle (0.75pt);
    \filldraw[black] (-2.5,0)+(90+2*360/5:0.75) circle (0.75pt);
    \filldraw[black] (-2.5,0)+(90+3*360/5:0.75) circle (0.75pt);
    \filldraw[black] (-2.5,0)+(90+4*360/5:0.75) circle (0.75pt);
    \draw (-0.5,0) ellipse (0.75 and 0.75);
    \node[anchor=center] at (-0.5,0) {$k_{C'}$};
    \begin{scope}[shift={(2,0)}]
    \filldraw[black] (-2.5,0.75) circle (0.75pt);
    \filldraw[black] (-2.5,0)+(90+360/5:0.75) circle (0.75pt);
    \filldraw[black] (-2.5,0)+(90+2*360/5:0.75) circle (0.75pt);
    \filldraw[black] (-2.5,0)+(90+3*360/5:0.75) circle (0.75pt);
    \filldraw[black] (-2.5,0)+(90+4*360/5:0.75) circle (0.75pt);
    \end{scope}
    \end{scope}
    \node[anchor=north] at (4,-5.5) {Type 4};

\end{tikzpicture}
\caption{The four types of graphs in $\mathcal{F}_C$.}\label{fig:4types}
\end{figure}
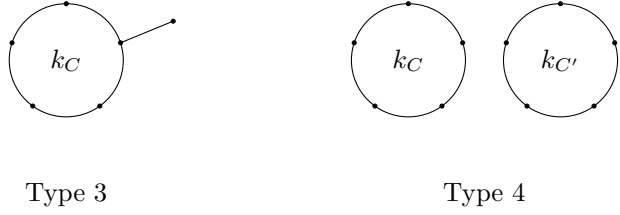

Let $\mathcal{F}=\bigcup_{r \geq 1, C \in S_r'} \mathcal{F}_C $. Then by property 3, every graph in $\mathcal{F}$ has at most two connected components, so $\mathcal{F}$ is quasi-finite. Also, by property 2, (c) holds. For each integer $r \geq 1$, let $S_r''$ be the set denoted by $S_r$ in the proof of part 1, that is, the set of residue classes $C$ modulo $r$ for which there exists an $\mathcal{F}$-saturated blueprint $(H,T)$ with $|H| \in C$ and $|T|=r$, and $S_r'''$ be the set of residue classes $C \in S_r''$ with $C \setminus \bigcup_{q<r, D \in S_q''} D \neq \emptyset$. By part 1, to prove (a) and (b) it suffices to show that $S_r'''=S_r'$ for every integer $r \geq 1$.  \newline

\emph{Claim 6:} For every integer $r \geq 1$ and residue class $C \in S_r'$, $C_{k_C} \cup kT_r$ is $\mathcal{F}$-saturated for every integer $k \geq 0$. In particular, $(C_{k_C}, T_r)$ is an $\mathcal{F}$-saturated blueprint, so $C \in S_r''$ by property 1. \newline

\emph{Proof:} We first show that $C_{k_C} \cup kT_r$ is $\mathcal{F}$-free. Note that for every integer $r' \geq 1$ and residue class $C' \in S_{r'}'$, every graph in $\mathcal{F}_{C'}$ contains $C_{k_{C'}}$. Hence, if $C_{k_C} \cup kT_r$ contains a graph in $\mathcal{F}_{C'}$, it contains  $C_{k_{C'}}$. But $C_{k_C}$ is the unique cycle in $C_{k_C} \cup kT_r$ and $k_{C'} \neq k_C$ if $C' \neq C$, so this can only happen if $r'=r$ and $C'=C$. Since $C_{k_C}$ is the unique cycle in $C_{k_C} \cup kT_r$, $C_{k_C} \cup kT_r$ does not contain a graph in $\mathcal{F}_C$ of the fourth type, since $C_{k_C}$ is a connected component of $C_{k_C} \cup kT_r$, $C_{k_C} \cup kT_r$ does not contain the graph in $\mathcal{F}_C$ of the third type, and since $C_{k_C}$ is an induced subgraph of $C_{k_C} \cup kT_r$, $C_{k_C} \cup kT_r$ does not contain a graph in $\mathcal{F}_C$ of the second type. Finally, by properties 3 and 4, $C_{k_C} \cup kT_r$ does not contain a graph in $\mathcal{F}_C$ of the first type. Hence $C_{k_C} \cup kT_r$ is $\mathcal{F}$-free.  \newline

It remains to show that every graph $\left(C_{k_C} \cup kT_r\right)+vw$ obtained by adding a new edge $vw$ to $C_{k_C} \cup kT_r$ contains a graph in $\mathcal{F}$. If $v,w \in C_{k_C}$, $\left(C_{k_C} \cup kT_r\right)+vw$ contains a graph in $\mathcal{F}_C$ of the second type, if $v \in C_{k_C}$ and $w \not \in C_{k_C}$, $\left(C_{k_C} \cup kT_r\right)+vw$ contains the graph in $\mathcal{F}_C$ of the third type, and if $v,w \not \in C_{k_C}$, $\left(C_{k_C} \cup kT_r \right)+vw$ contains a graph in $\mathcal{F}_C$ of the first type by property 5. \qedclaim\newline

\emph{Claim 7:} Let $(H,T)$ be an $\mathcal{F}$-saturated blueprint. Then either $H \cong C_{k_C} \cup kT_r$ and $T \cong T_r$ for some integers $r \geq 1$ and $k \geq 0$ and residue class $C \in S_r'$ or $S_r$ contains all of the residue classes modulo $r$ for some integer $1 \leq r \leq |T|$. \newline

\emph{Proof:} Let $m \geq 2$ be an integer such that $H \cup mT$ is $\mathcal{F}$-saturated. Let $T_1$ and $T_2$ be distinct connected components of $H \cup mT$ isomorphic to $T$ and pick vertices $u \in T_1$ and $w \in T_2$. Then $\left(H \cup mT \right)+uw$ contains a graph in $\mathcal{F}_C$ and hence $C_{k_C}$ for some integer $r \geq 1$ and residue class $C \in S_r'$. But there is no cycle in $\left(H \cup mT \right)+uw$ containing the edge $uw$, since $u$ and $w$ are in different connected components of $H \cup mT$, so in fact $C_{k_C} \subseteq H \cup mT$. Then $C_{k_C}$ must be an induced subgraph of $H \cup mT$, for otherwise $H \cup mT$ would contain a graph in $\mathcal{F}_C$ of the second type, and a connected component of $H \cup mT$, for otherwise $H \cup mT$ would contain the graph in $\mathcal{F}_C$ of the third type, and all other connected components of $H \cup mT$ must be $\mathcal{C}$-free, for otherwise $H \cup mT$ would contain a graph in $\mathcal{F}_C$ of the fourth type. \newline

Suppose first that $r \in \{1,2,3,4,5,7\}$. Since $C \in S_r'$, $S_r$ is non-empty, so contains all of the residue classes modulo $r$. Since there is no cycle in $\left(H \cup mT \right)+uw$ containing the edge $uw$ and all connected components of $H \cup mT$ except $C_{k_C}$ are $\mathcal{C}$-free, $C_{k_C}$ is the only copy of a graph in $\mathcal{C}$ in $\left(H \cup mT \right)+uw$. Hence $\left(H \cup mT \right)+uw$ does not contain a graph in $\mathcal{F}_C$ of the second, third or fourth type, so must contain a copy $F$ of a graph in $\mathcal{F}_C$ of the first type. Then $F$ must contain the edge $uw$, for otherwise $H \cup mT$ would contain $F$, and there is no cycle in $\left(H \cup mT \right)+uw$ containing the edge $uw$, so $(T_1 \cup T_2)+uw$ contains a graph in $\mathcal{F}_r$ by property 3. But every graph $T_r+e$ obtained by adding a new edge $e$ to $T_r$ contains a cycle and $(T_1 \cup T_2)+uw$ is a tree, so $(T_1 \cup T_2)+uw$ must contain a graph $2T_r+e$ obtained from $2T_r$ by adding an edge $e$ between the two copies of $T_r$. Hence $r \leq |T|$ by property 1. \newline

Now suppose $r \in \{6\} \cup \mathbb{Z}_{\geq 8}$. We show that all connected components $K \neq C_{k_C}$ of $H \cup mT$ are isomorphic to $T_r$ and hence that $H \cong C_{k_C} \cup kT_r$ and $T \cong T_r$ for some integer $k \geq 0$. We show that $K \in \mathcal{F}_r'$, which implies $K \cong T_r$, for otherwise $H \cup mT$ would contain a graph in $\mathcal{F}_C$ of the first type. Let $v \in K$. If $r \geq 6$ is even, we need to show that $v$ is adjacent to a vertex of degree at least $r/2$, and if $r \geq 9$ is odd, we need to show that $v$ is either adjacent to a vertex of degree at least $r-5$ or adjacent to a vertex of degree at least three which is adjacent to a vertex distinct from $v$ of degree at least three. \newline

Since $m \geq 2$, there is a connected component $T_3 \neq K$ of $H \cup mT$ isomorphic to $T$. Pick a vertex $\ell \in T_3$ of degree at most one. Then there is no cycle in $\left(H \cup mT\right)+v\ell$ containing the edge $v\ell$, since $v$ and $\ell$ are in different connected components of $H \cup mT$, and all connected components of $H \cup mT$ except $C_{k_C}$ are $\mathcal{C}$-free, so $C_{k_C}$ is the only copy of a graph in $\mathcal{C}$ in $\left(H \cup mT \right)+v\ell$. Hence $\left(H \cup mT \right)+v\ell$ does not contain a graph in $\mathcal{F}_{C'}$ for all $C' \neq C$, since all such graphs contain $C_{k_{C'}} \not \cong C_{k_C}$, and does not contain a graph in $\mathcal{F}_C$ of the second, third or fourth type. So $\left(H \cup mT \right)+v\ell$ must contain a copy $F$ of a graph in $\mathcal{F}_C$ of the first type. Then $F$ must contain the edge $v\ell$, for otherwise $H \cup mT$ would contain $F$, and there is no cycle in $\left(H \cup mT \right)+v\ell$ containing the edge $v\ell$, so $(K \cup T_3)+v\ell$ contains a copy of a graph in $\mathcal{F}_r$ that contains $v$ by property 3. \newline

Hence, if $r \geq 6$ is even, $v$ is adjacent to a vertex of degree at least $r/2$ in $(K \cup T_3)+v\ell$, and if $r \geq 9$ is odd, $v$ is either adjacent to a vertex of degree at least $r-5$ or adjacent to a vertex of degree at least three which is adjacent to a vertex distinct from $v$ of degree at least three in $(K \cup T_3)+v\ell$. But $\ell$ has degree at most two in $(K \cup T_3)+v\ell$, $r/2 \geq 3$ if $r \geq 6$ is even and $r-5 \geq 4$ if $r\geq 9$ is odd, so, if $r \geq 6$ is even, $v$ is adjacent to a vertex of degree at least $r/2$ in $K$, and if $r \geq 9$ is odd, $v$ is either adjacent to a vertex of degree at least $r-5$ or adjacent to a vertex of degree at least three which is adjacent to a vertex distinct from $v$ of degree at least three in $K$. \qedclaim\newline

Using Claims 6 and 7, property 1 and induction on $r$, it is easy to show that $S_r'''=S_r'$ for every integer $r \geq 1$. \newline

\end{proof}

\section{Singleton families}\label{sin}

In this section we discuss the error in \cite{TrusTuza} and prove Theorem~\ref{sinfam}. To prove Theorem~\ref{sinfam}, we will need the following key lemma.

\begin{lemma}\label{lem:sinfam}
Let $F$ be a graph and $T$ be a tree such that every graph $T+e$ obtained by adding a new edge $e$ to $T$ and every graph $2T+e$ obtained from $2T$ by adding an edge $e$ between the two copies of $T$ contains a connected component of $F$ that is not contained in $T$. Then for every residue class $C$ modulo $|T|$ there is an $F$-saturated blueprint $(H,T)$ with $|H| \in C$.     
\end{lemma}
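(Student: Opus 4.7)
The goal is to construct, for each residue class $C$ modulo $r = |T|$, an $F$-saturated blueprint $(H, T)$ with $|H| \in C$, meaning $H \cup kT$ is $F$-saturated for all sufficiently large $k$.

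\textbf{Setup.} I partition the connected components of $F$ into \emph{good} components (those not contained in $T$) and \emph{bad} components (those contained in $T$); the hypothesis ensures at least one good component exists. Since every connected subgraph of $kT$ lies inside a single copy of $T$, no good component of $F$ can be realised inside $kT$; and for $k$ large, every bad component of $F$ can be housed inside a fresh copy of $T$. Therefore the containment $F \subseteq H \cup kT$ reduces to the question of realising all the good components of $F$ simultaneously as vertex-disjoint subgraphs of $H$. The central job is to design $H$ so that this realisation fails (giving $F$-freeness) but becomes possible after any edge addition to $H \cup kT$ (giving saturation).

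\textbf{Construction.} For each target residue class $C$, I build $H$ as the disjoint union of a \emph{core} piece (responsible for saturation) and a \emph{residue adjuster} piece (responsible for putting $|H|$ in $C$ modulo $r$). The core will be an overlapping vertex-union of several copies of ``$F$ minus one good component'', glued so that no disjoint realisation of the full set of good components of $F$ exists inside $H$, yet so that any good component produced in $T+e$ or $2T+e$ by adding an edge combines with the complementary material in the core to complete $F$. I then verify $F$-saturation in four cases: edges within a copy of $T$ (the hypothesis on $T+e$ produces the needed good component), edges between two copies of $T$ (the hypothesis on $2T+e$ does so), edges with both endpoints in $H$ (the core is designed to be ``almost $F$-saturated'' in its interior), and edges between $H$ and a copy of $T$ (handled jointly by the core and adjuster). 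The adjuster is built from small subtrees of $T$ or related gadgets of vertex count $\equiv c_0 - |\mathrm{core}| \pmod{r}$ for the required $c_0 \in C$, and must be chosen so that every potential new edge incident to it still yields $F$.

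\textbf{Main obstacle.} The key difficulty is the design of the core. The hypothesis only guarantees that \emph{some} good component of $F$ appears in each $T+e$ and $2T+e$, and different edges may produce different good components. A naive core equal to ``$F$ minus a single fixed good component'' fails whenever the added edge creates a good component of a different type, because $H$ no longer supplies the complementary pieces. The novel ingredient — a carefully overlapping gluing that preserves $F$-freeness while supplying the right disjoint complement for every possible good component an added edge might create — is what requires the ``novel construction'' promised in the paper. Avoiding the use of Claim 3 from the proof of Theorem~\ref{quasfinfam} (whose technical subtree condition need not be satisfiable for arbitrary $|T|$) and instead hitting every residue class directly via the adjuster is the additional hurdle; I expect that for singleton $F$ the extra structure offered by the hypothesis on both $T+e$ and $2T+e$ gives exactly enough flexibility to make the adjuster construction go through for every residue class modulo $r$.
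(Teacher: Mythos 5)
Your proposal is a plan rather than a proof: the entire content of the lemma lies in the explicit design of the graph you call the ``core'', and you leave that design unspecified, writing only that you ``expect'' a suitable overlapping gluing to exist. You correctly identify the obstacle (different added edges may create different ``good'' components of $F$, so $H$ must simultaneously supply a disjoint complement for every one of them while remaining $F$-free), but identifying the obstacle is not the same as overcoming it, and nothing in your sketch indicates how to guarantee $F$-freeness of an ``overlapping vertex-union of several copies of $F$ minus one good component'' --- indeed, without further structure such a union can easily contain $F$.

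The missing idea in the paper's proof is a vertex-cover construction. After reducing to the case where no component of $F$ lies in $T$ (so that a blueprint is $F$-saturated iff it is $F^*$-saturated for $F^*$ the union of such components) and disposing separately of the case where some component of $F$ is a star, one takes $H^*$ to be the graph on $A \cup B$ with $|A| = \sigma(F)-1$, $|B| = m \geq |F|$, and all edges incident to $A$, where $\sigma$ denotes the minimum vertex cover number. This is $F$-free for the clean structural reason that $\sigma(H^*) = \sigma(F)-1 < \sigma(F)$, and the residue class of $|H^*| = \sigma(F)-1+m$ is controlled by the free parameter $m$ --- so no separate ``residue adjuster'' gadget is needed. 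One then saturates $H^*$ to $H$ and verifies that $H \cup kT$ is $F$-saturated for all $k$; the delicate cases are exactly the ones you wave at (an added edge between $H$ and a copy of $T$), and they are resolved by embedding $F$ piece by piece using a minimum vertex cover $S$ of the component $F_0$ created by the added edge, with subcases according to whether the relevant component of $F_0$ minus the new edge is a singleton and whether $|S|$ drops when restricted appropriately. The flexibility that ``$A$ can absorb any vertex of a cover'' and ``$B$ is large and independent'' is what lets the same $H$ work for every possible $F_0$; your proposal contains no mechanism playing this role.
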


We first prove Theorem \ref{sinfam} using Lemma \ref{lem:sinfam}.

\begin{repth3}

For every graph $F$, either there is an integer $r \geq 1$ and an integer $k_C$ for each residue class $C$ modulo $r$ such that 

$$ sat(n,F)= \frac{(r-1)n+k_C}{r} $$
        
for every residue class $C$ modulo $r$ and large enough $n \in C$ or

$$ sat(n,F) \geq n+o(n) . $$ 

The former occurs if and only if there is a tree $T$ such that every graph $T+e$ obtained by adding a new edge $e$ to $T$ and every graph $2T+e$ obtained from $2T$ by adding an edge $e$ between the two copies of $T$ contains a connected component of $F$ that is not contained in $T$ and $r$ is the minimum number of vertices in such a tree.
\end{repth3}

\begin{proof}

Let $F$ be a graph. Apply part 1 of Theorem \ref{quasfinfam} to the family $\mathcal{F}=\{F\}$ and recall from the proof of Theorem \ref{quasfinfam} that for every integer $r \geq 1$, $S_r$ is the set of residue classes $C$ modulo $r$ for which there exists an $F$-saturated blueprint $(H,T)$ with $|H| \in C$ and $|T|=r$. \newline 

\emph{Claim 1:} If $(H,T)$ is an $F$-saturated blueprint then every graph $T+e$ obtained by adding a new edge $e$ to $T$ and every graph $2T+e$ obtained from $2T$ by adding an edge $e$ between the two copies of $T$ contains a connected component of $F$ that is not contained in $T$.\newline

\emph{Proof:} 
Let $k_0$ be the number of connected components of $F$ which are contained in $T$. As $(H,T)$ is an $F$-saturated blueprint, there exists an integer $k\ge\max\{k_0,2\}$ such that $G:=H\cup kT$ is $F$-saturated. Suppose for the sake of contradiction that the claim does not hold. Then, there exists a missing edge $e$ in $kT\subseteq G$ such that adding $e$ to $G$ does not create a connected component of $F$ that is not contained in $T$. As $G$ is $F$-saturated, $G+e$ contains a copy of $F$. All connected components of this copy which are not contained in $T$ must lie in $H$, as adding $e$ did not create any such connected components. Finally, $kT$ contains the $k_0$ connected components of $F$ which are contained in $T$, so $G$ contains $F$, a contradiction.\qedclaim\newline

Combining Claim 1 and Lemma~\ref{lem:sinfam}, we obtain that for all integers $r \geq 1$, $S_r$ contains either none or all of the residue classes modulo $r$, with the latter occurring if and only if there is a tree $T$ on $r$ vertices such that every graph $T+e$ obtained by adding a new edge $e$ to $T$ and every graph $2T+e$ obtained from $2T$ by adding an edge $e$ between the two copies of $T$ contains a connected component of $F$ that is not contained in $T$. This proves the theorem.
\end{proof}

We now discuss the error in~\cite{TrusTuza}. So far, our approach to proving Theorem~\ref{sinfam} is essentially the same as that in~\cite{TrusTuza} (though only singleton families are considered there, so there is no result like part 1 of Theorem \ref{quasfinfam}) and it is in the proof of the analogue of Lemma~\ref{lem:sinfam} that the mistake arises. More precisely, the error is at the top of the fourth page (labelled as page 312), where it is incorrectly claimed that a certain graph $H_{a,b}$ is $F$-saturated (see Remark~\ref{rmk:counterexample} for an explicit example where $H_{a,b}$ is not $F$-saturated). \newline

The approach taken there to prove Lemma~\ref{lem:sinfam} is to take $H$ to be the disjoint union of cliques of carefully chosen sizes. Before giving a correct proof of Lemma~\ref{lem:sinfam}, we provide an example where we cannot take $H$ to be a disjoint union of cliques. This shows that a novel construction is required to prove Lemma~\ref{lem:sinfam}, rather than just a different choice of the sizes of the cliques.

\begin{exmp}\label{counterexample}
Let $T$ be the star on eight vertices. Let $F_\triangle$, $F_{CC}$, $F_{CL}$ and $F_{LL}$ denote the following graphs, respectively: a triangle, two disjoint copies of $T$ plus an edge between their centers, two disjoint copies of $T$ plus an edge between the center of one copy and a leaf of the other copy, and two disjoint copies of $T$ plus an edge between two leaves of different copies. Let $F:=F_\triangle\cup F_{CC}\cup F_{CL}\cup F_{LL}$. Then $F$ and $T$ satisfy the assumptions of Lemma~\ref{lem:sinfam}. Furthermore, if $H$ is a disjoint union of cliques and $(H,T)$ is an $F$-saturated blueprint, then $|H|\not\equiv-1 \mod 8$. 
\end{exmp}

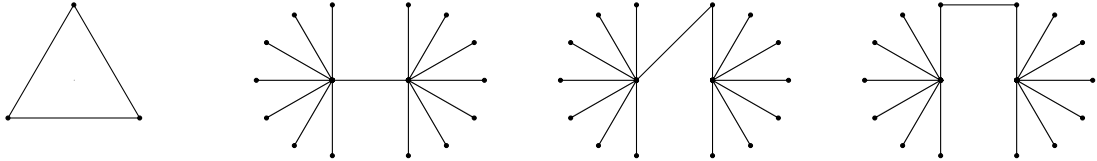
\begin{figure}[H]
\centering
\begin{tikzpicture}

         \filldraw (-3.4,0) circle (0pt) +(-30:1) circle (0.75pt) +(-150:1) circle (0.75pt) +(-270:1) circle (0.75pt);

         \draw (-3.4,0) +(-30:1)-- +(-150:1)-- +(-270:1)-- +(-30:1);

         \draw (0,0) -- (1,0);

         \filldraw (0,0) circle (0.75pt) -- +(90:1) circle (0.75pt)
         +(0,0) circle (0.75pt) -- +(120:1) circle (0.75pt)
         +(0,0) circle (0.75pt) -- +(150:1) circle (0.75pt)
         +(0,0) circle (0.75pt) -- +(180:1) circle (0.75pt)
         +(0,0) circle (0.75pt) -- +(210:1) circle (0.75pt)
         +(0,0) circle (0.75pt) -- +(240:1) circle (0.75pt)
         +(0,0) circle (0.75pt) -- +(270:1) circle (0.75pt);

         \filldraw (1,0) circle (0.75pt) -- +(-90:1) circle (0.75pt)
         +(0,0) circle (0.75pt) -- +(-60:1) circle (0.75pt)
         +(0,0) circle (0.75pt) -- +(-30:1) circle (0.75pt)
         +(0,0) circle (0.75pt) -- +(0:1) circle (0.75pt)
         +(0,0) circle (0.75pt) -- +(30:1) circle (0.75pt)
         +(0,0) circle (0.75pt) -- +(60:1) circle (0.75pt)
         +(0,0) circle (0.75pt) -- +(90:1) circle (0.75pt);

         \draw (4,0) -- (5,1);

         \filldraw (4,0) circle (0.75pt) -- +(90:1) circle (0.75pt)
         +(0,0) circle (0.75pt) -- +(120:1) circle (0.75pt)
         +(0,0) circle (0.75pt) -- +(150:1) circle (0.75pt)
         +(0,0) circle (0.75pt) -- +(180:1) circle (0.75pt)
         +(0,0) circle (0.75pt) -- +(210:1) circle (0.75pt)
         +(0,0) circle (0.75pt) -- +(240:1) circle (0.75pt)
         +(0,0) circle (0.75pt) -- +(270:1) circle (0.75pt);

         \filldraw (5,0) circle (0.75pt) -- +(-90:1) circle (0.75pt)
         +(0,0) circle (0.75pt) -- +(-60:1) circle (0.75pt)
         +(0,0) circle (0.75pt) -- +(-30:1) circle (0.75pt)
         +(0,0) circle (0.75pt) -- +(0:1) circle (0.75pt)
         +(0,0) circle (0.75pt) -- +(30:1) circle (0.75pt)
         +(0,0) circle (0.75pt) -- +(60:1) circle (0.75pt)
         +(0,0) circle (0.75pt) -- +(90:1) circle (0.75pt);

         \draw (8,1) -- (9,1);

         \filldraw (8,0) circle (0.75pt) -- +(90:1) circle (0.75pt)
         +(0,0) circle (0.75pt) -- +(120:1) circle (0.75pt)
         +(0,0) circle (0.75pt) -- +(150:1) circle (0.75pt)
         +(0,0) circle (0.75pt) -- +(180:1) circle (0.75pt)
         +(0,0) circle (0.75pt) -- +(210:1) circle (0.75pt)
         +(0,0) circle (0.75pt) -- +(240:1) circle (0.75pt)
         +(0,0) circle (0.75pt) -- +(270:1) circle (0.75pt);

         \filldraw (9,0) circle (0.75pt) -- +(-90:1) circle (0.75pt)
         +(0,0) circle (0.75pt) -- +(-60:1) circle (0.75pt)
         +(0,0) circle (0.75pt) -- +(-30:1) circle (0.75pt)
         +(0,0) circle (0.75pt) -- +(0:1) circle (0.75pt)
         +(0,0) circle (0.75pt) -- +(30:1) circle (0.75pt)
         +(0,0) circle (0.75pt) -- +(60:1) circle (0.75pt)
         +(0,0) circle (0.75pt) -- +(90:1) circle (0.75pt);

\end{tikzpicture}
\caption{The graph $F$.}\label{fig:counterexample}
\end{figure}

\begin{proof}

It is easy to see that adding a missing edge to $T$ creates a copy of $F_{\triangle}$. Similarly, adding an edge between two disjoint copies of $T$ creates a copy of $F_{CC}$, $F_{CL}$ or $F_{LL}$. Hence, $F$ and $T$ satisfy the assumptions of Lemma~\ref{lem:sinfam}.\newline

Suppose $H$ is a disjoint union of cliques and $(H,T)$ is an $F$-saturated blueprint. Thus, there exists an integer $k\ge1$ such that $H\cup kT$ is $F$-saturated. Note that adding an edge between two leaves of a copy of $T$ creates a copy of $F_\triangle$ but no other connected component of $F$. Since adding a missing edge to $H\cup kT$ creates a copy of $F$, this implies that $H$ contains $F\setminus F_{\triangle}=F_{CC}\cup F_{CL}\cup F_{LL}$. \newline

Next, note that $H$ does not have connected components of size one or two, as adding an edge between such connected components and a copy of $T$ would not create a connected component of $F$ (and so it would not create a copy of $F$). Hence, every connected component of $H$ contains $F_\triangle$. Since $F\setminus F_\triangle$ has exactly three connected components and $H$ contains $F\setminus F_\triangle$, it follows that $H$ has at most three connected components, as otherwise $H$ (and hence $H\cup kT$) would contain $F$. \newline 

Finally, we bound the size of $H$ from above and below. First, since $H$ contains $F\setminus F_\triangle$, we have $|H|\ge|F\setminus F_\triangle| = 48$. On the other hand, fixing a copy of $F\setminus F_\triangle$ in $H$, if some connected component of $H$ contains three vertices outside the copy, then $H$ (and hence $H\cup kT$) contains $F$, a contradiction. Thus, each connected component of $H$ contains at most two vertices outside the copy of $F\setminus F_\triangle$. As $H$ has at most three connected components, this implies $|H|\le |F\setminus F_\triangle|+6 = 54$. Hence $48\le|H|\le54$. In particular, $|H|\not\equiv-1 \mod 8$, as required.
\end{proof}

\begin{rem}\label{rmk:counterexample}
For $F$ and $T$ as in Example~\ref{counterexample}, the graph $H_{a,b}$ claimed to be $F$-saturated in~\cite{TrusTuza} is $aK_{15} \cup bK_{14} \cup 4(K_{15}\cup T) \cup K_{47}$, where $a,b \geq 0$ are integers. Observe that $H_{a,b}$ does not contain $F_{CC}\cup F_{CL}\cup F_{LL}$, so it is $F$-free, but adding a new edge $e$ between two leaves of a copy of $T$ does not create a copy of $F_{CC}$, $F_{CL}$ or $F_{LL}$. Hence $H_{a,b}+e$ is still $F$-free, so $H_{a,b}$ is not $F$-saturated. 
\end{rem}

\begin{rem}
Note that to prove Theorem \ref{sinfam} a weaker version of Lemma~\ref{lem:sinfam} would suffice where we additionally assume that $T$ is the smallest tree satisfying the assumptions. However, Example~\ref{counterexample} still applies: it is easy to check that $T$ is the unique smallest tree such that the assumptions of Lemma~\ref{lem:sinfam} hold.
\end{rem}

We now prove Lemma \ref{lem:sinfam}. 

\begin{proof}[Proof of Lemma~\ref{lem:sinfam}]
Let $F^*$ be the union of all connected components of $F$ which are not contained in $T$. It is straightforward to check that a blueprint $(H,T)$ is $F$-saturated if and only if it is $F^*$-saturated. Therefore, we may assume that no connected component of $F$ is contained in $T$. Also, $F$ must have a connected component that is a tree, as adding an edge between two disjoint copies of $T$ creates a connected component of $F$.\newline

\emph{Claim 1:} For every integer $k \geq 0$, $K_{|F|-1} \cup kT$ is $F$-saturated. In particular, $(K_{|F|-1},T)$ is an $F$-saturated blueprint.\newline

\emph{Proof:}
Note that $K_{|F|-1}\cup kT$ is $F$-free, since $K_{|F|-1}$ is $F$-free and no connected component of $F$ is contained in $T$. It remains to show that adding a missing edge $xy$ to $K_{|F|-1}\cup kT$ creates a copy of $F$. If $x$ and $y$ belong to one or two copies of $T$, say $T_1$ and $T_2$, then $\left(T_1\cup T_2\right)+xy$ contains a connected component $F_0$ of $F$ by assumption. As $K_{|F|-1}$ contains $F\setminus F_0$, it follows that $K_{|F|-1} \cup \left[ \left(T_1\cup T_2\right)+xy \right]$ (and hence $(K_{|F|-1}\cup kT)+xy$) contains $F$. Suppose instead that, say, $x$ belongs to $K_{|F|-1}$. Recall that $F$ has a connected component that is a tree, meaning that $F$ has a vertex of degree one. Thus, $(K_{|F|-1}\cup\{y\})+xy$ (and hence $(K_{|F|-1}\cup kT)+xy$) contains $F$, as required.
\qedclaim\newline

If $|T|=1$ then Claim 1 proves Lemma \ref{lem:sinfam}, so suppose that $|T|\ge2$ and let $C$ be a residue class modulo $|T|$. First, we consider an easier case.\newline

\emph{Claim 2:} If a connected component of $F$ is a star then there exists an $F$-saturated blueprint $(H,T)$ with $|H|\in C$.\newline

\emph{Proof:}
Suppose that a connected component $F_0$ of $F$ is a star on, say, $\ell+1$ vertices. Let $m \geq 0$ be an integer such that $|F|-1+m\in C$ and let $H^*:= K_{|F|-1}\cup I_m\cup(\ell m-m+1)T$, where $I_m$ is an independent set of size $m$. Note that $H^*$ is $F$-free, since $K_{|F|-1}$ is $F$-free and no connected component of $F$ is contained in $T$. Hence we can add edges to $H^*$ to obtain an $F$-saturated graph $H$. \newline

Note that all new edges are incident to $I_m$, as otherwise $H$ would contain $\left[ K_{|F|-1} \cup (\ell m -m+1)T \right] +e$ for some missing edge $e$, and hence $F$ by Claim 1. Note also that each vertex in $I_m$ has at most $\ell-1$ neighbours in $(\ell m-m+1)T$, as otherwise $H$ would contain a copy of $F_0$ disjoint from $K_{|F|-1}$, and thus a copy of $F$. Hence, there are at most $\ell m-m$ vertices in $(\ell m-m+1)T$ which are adjacent to some vertex in $I_m$. In particular, there is a copy of $T$ in $(\ell m-m+1)T$ which is a connected component in $H$.

\begin{figure}[H]
\centering
\begin{tikzpicture}

\fill[black!15] (3,0.3) rectangle (3,-0.3); 
\fill[black!15] (3,0.2)--(3,-0.2)--(6,1.8)--(6,2.2)--(3,0.2); 
\fill[black!15] (3,0.2)--(3,-0.2)--(6,-0.7)--(6,-0.3)--(3,0.2); 
\fill[black!15] (0,0.5) rectangle (3,-0.5); 
\filldraw[fill=black!0] (0,0) circle (1cm) node[black]{$K_{|F|-1}$};
\filldraw[fill=black!15] (3,0) circle (1cm) node[black]{$I_m$};  
\filldraw[fill=black!0] (6,-2) circle (0.6cm) node[black]{$T$};
\filldraw[fill=black!0] (6,-0.5) circle (0.6cm) node[black]{$T$};
\filldraw[fill=black!0] (6,0.75) circle (0cm) node[black]{$\vdots$};
\filldraw[fill=black!0] (6,2) circle (0.6cm) node[black]{$T$};
\draw [decorate,decoration={brace,amplitude=10pt}] (6.9,2.6)--(6.9,-2.6) node[midway,right=15pt] {$\ell m-m+1$};
\end{tikzpicture}
\caption{The graph $H$. The shading denotes the presence of new edges.}\label{fig:starcase}
\end{figure}
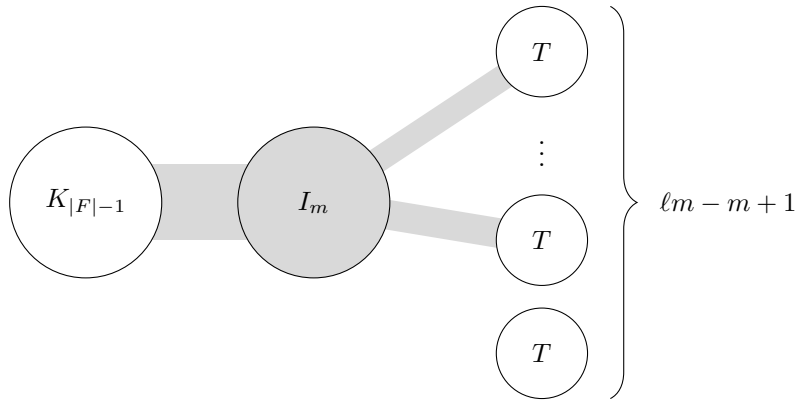

We have $|H|\in C$ and so it remains to show that $(H,T)$ is an $F$-saturated blueprint. We in fact show that $H\cup kT$ is $F$-saturated for every integer $k\ge0$. First, observe that $H$ being $F$-free implies $H\cup kT$ is $F$-free, since no connected component of $F$ is contained in $T$. Let $e$ be a missing edge in $H\cup kT$. If $e$ is not incident to $I_m$ then $(H\cup kT)+e$ contains $\left [ K_{|F|-1}\cup (\ell m-m+1+k)T \right]+e$ and hence $F$ by Claim 1. If $e$ is incident to $I_m$ then $(H\cup kT)+e$ contains a copy of $H+e'$ for some missing edge $e'$ (here we used the fact that there is a copy of $T$ in $(\ell m-m+1)T$ which is a connected component in $H$), and hence $F$, since $H$ is $F$-saturated. This concludes the proof.
\qedclaim\newline

Thus, we may assume that no connected component of $F$ is a star. Given a graph $G$, we say $S\subseteq V(G)$ is a {\it cover} of $G$ if every edge of $G$ is incident to $S$. We let $\sigma(G)$ be the size of the smallest cover of $G$. Pick an integer $m \geq |F|$ such that $\sigma(F)-1+m \in C$. Let $H^*$ be the graph consisting of two disjoint sets of vertices $A$ and $B$, of size $\sigma(F)-1$ and $m$ respectively, and all edges incident to $A$. Since $\sigma(H^*)=|A|=\sigma(F)-1 < \sigma(F)$, $H^*$ is $F$-free. Hence we can add edges to $H^*$ to obtain an $F$-saturated graph $H$. \newline

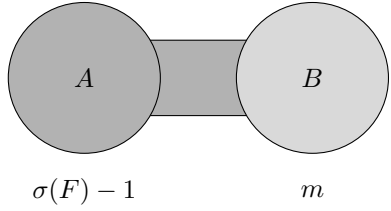
\begin{figure}[H]
\centering
\begin{tikzpicture}
\scalebox{1}{

\filldraw[fill=black!30] (0.5,0.5)--(2.5,0.5)--(2.5,-0.5)--(0.5,-0.5);
\filldraw[fill=black!30] (0,0) circle (1);
\node[anchor=center] at (0,0) {\scalebox{1}{$A$}};
\filldraw[fill=black!15] (3,0) circle (1);
\node[anchor=center] at (3,0) {\scalebox{1}{$B$}};
\node[anchor=center] at (0,-1.5) {\scalebox{1}{$\sigma(F)-1$}};
\node[anchor=center] at (3,-1.5) {\scalebox{1}{$m$}};

}
\end{tikzpicture}
\caption{The graph $H$. The dark and light shades denote the presence of all and some edges, respectively.}\label{fig:starcase}
\end{figure}

Note that $|H| \in C$, so it remains to prove that $(H,T)$ is an $F$-saturated blueprint. We in fact prove that $H\cup kT$ is $F$-saturated for every integer $k\ge0$. First, note that $H\cup kT$ is $F$-free, since $H$ is $F$-free and no connected component of $F$ is contained in $T$. It remains to show that $(H\cup kT)+xy$ contains $F$ for every missing edge $xy$. There are four cases to consider. The first two are straightforward, while the latter two require the power of our construction.\newline

{\it Case 1:} $x,y\in H$. \newline

As $H$ is $F$-saturated, $H+xy$ (and hence $(H\cup kT)+xy$)  contains $F$. \newline

{\it Case 2:} $x,y\not\in H$. \newline

Then $x$ and $y$ must lie in one or two copies of $T$, say $T_1$ and $T_2$. By assumption, $(T_1\cup T_2)+xy$ contains a connected component $F_0$ of $F$. Since $\sigma(F\setminus F_0)=\sigma(F)-\sigma(F_0)\le\sigma(F)-1 = |A|$ and $|B|=m \geq |F|$, $H^*$ contains $F\setminus F_0$, so $H^* \cup \left[ (T_1\cup T_2)+xy \right]$ (and hence $(H\cup kT)+xy$) contains $F$.\newline

{\it Case 3:} $x\in A$ and $y\not\in H$. \newline

We have that $y$ lies in some copy $T_y$ of $T$. Let $T_1$ and $T_2$ be two trees isomorphic to $T$. We have graph isomorphisms $f_1:T_y\to T_1$ and $f_2:T_y\to T_2$. Let $y_1:=f_1(y)$ and  $y_2:=f_2(y)$ be the copies of $y$ in $T_1$ and $T_2$, respectively. By assumption, $(T_1\cup T_2)+y_1y_2$ contains a copy $F_0$ of a connected component of $F$. Note that $F_0$ must contain the edge $y_1y_2$ (and hence the vertices $y_1$ and $y_2$), as otherwise $F_0$ would be contained in $T$. \newline

Let $C_1 \subseteq T_1$ and $C_2 \subseteq T_2$ be the connected components of $F_0-y_1y_2$ containing $y_1$ and $y_2$, respectively. Let $S$ be a cover of $F_0$ of minimum size, i.e. with $|S|=\sigma(F_0)$. By the definition of a cover, we have $\{y_1,y_2\}\cap S\not=\emptyset$. Without loss of generality, $y_1\in S$. Now we explain how to embed $F$ in $(H^*\cup T_y)+xy$ (and hence in $(H\cup kT)+xy$). Note that $F \cong \left[ (F\setminus F_0)\cup C_1\cup C_2 \right] +y_1y_2$. We will embed $F \setminus F_0$, $C_1$, $C_2$ and $y_1y_2$ separately. \newline

First, suppose that $|C_2|\ge2$, which implies $|S \cap C_2|\ge1$. Embed $F\setminus F_0$ and $C_1$ in $H^*$. This is possible since $S \cap C_1$ is a cover of $C_1$,  

$$\sigma(F\setminus F_0)+|S \cap C_1|=\sigma(F)-|S \cap C_2|\le\sigma(F)-1=|A| $$

and $|B|=m \geq |F|$. As $y_1\in S \cap C_1$ and $x\in A$, we can choose to send $y_1$ to $x$. Next, embed $C_2$ in $T_y$ via $f_2^{-1}$. In particular, $y_2$ is sent to $y$. Finally, embed the edge $y_1y_2$ as $xy$.\newline

Next, suppose that $|C_2|=1$, which implies $C_2=\{y_2\}$. Since $F_0$ is not a star, it follows that $\sigma(F_0)\ge2$. First, embed $C_1$ in $T_y$ via $f_1^{-1}$. In particular, $y_1$ is sent to $y$. Next, embed $C_2=\{y_2\}$ as $x$. Then, embed $F \setminus F_0$ in $H^* - x$. This is possible since

$$\sigma(F\setminus F_0)=\sigma(F)-\sigma(F_0) \le \sigma(F)-2=|A \setminus \{x\}| $$

and $|B|=m \geq |F|$. Finally, embed the edge $y_1y_2$ as $yx$. \newline

\begin{figure}[H]
\centering
\begin{tikzpicture}
\fill[black!30] (-1.5,0)--(1.5,0)--(1.5,2)--(-1.5,2)--(-1.5,0);
\filldraw[fill=black!30] (0,0) ellipse (2cm and 0.75cm);
\filldraw[fill=black!00] (0,2) ellipse (2cm and 0.75cm);
\draw (0,0) circle (0) node{$A$};
\draw (0,2) circle (0) node{$B$};

\filldraw[fill=blue!30] (-0.75,1) ellipse (0.6cm and 1cm) node[black]{$F\setminus F_0$};
\filldraw[fill=blue!30] (0.75,1) ellipse (0.6cm and 1cm) node[black]{$C_1$};

\filldraw[fill=blue!30] (3.5,1) circle (0.75cm) node[black,anchor=south]{$C_2$};
\draw (3.5,1) circle (1.3cm);
\draw (3.5,2) circle (0) node{$T_y$};

\draw[very thick,blue!60] (0.75,0.5)--(3.5,0.75);
\fill[red] (0.75,0.5) circle (2.5pt) node[black,anchor=north]{$y_1$};
\fill[yellow] (3.4,0.65) rectangle (3.6,0.85); \node[anchor=west] at (3.5,0.7) {$y_2$};


\fill[black!30] (7.5,0)--(10.5,0)--(10.5,2)--(7.5,2)--(7.5,0);
\filldraw[fill=black!30] (9,0) ellipse (2cm and 0.75cm);
\filldraw[fill=black!00] (9,2) ellipse (2cm and 0.75cm);
\draw (9,0) circle (0) node{$A$};
\draw (9,2) circle (0) node{$B$};

\filldraw[fill=blue!30] (8.25,1) ellipse (0.6cm and 1cm) node[black]{$F\setminus F_0$};

\filldraw[fill=blue!30] (12.5,1) circle (0.75cm) node[black,anchor=south]{$C_1$};
\draw (12.5,1) circle (1.3cm);
\draw (12.5,2) circle (0) node{$T_y$};

\draw[very thick,blue!60] (9.75,0.5)--(12.5,0.75);
\fill[red] (9.75,0.5) circle (2.5pt) node[black,anchor=north]{$y_2$};

\fill[yellow] (12.4,0.65) rectangle (12.6,0.85); 
\node[anchor=west] at (12.5,0.75) {$y_1$};
\end{tikzpicture}
\caption{The embeddings in Case 3. The subcases $|C_2|\ge2$ and $|C_2|=1$ are on the left and right, respectively. The red circle and the yellow square represent $x$ and $y$, respectively.}\label{fig:cover3.2}
\end{figure}
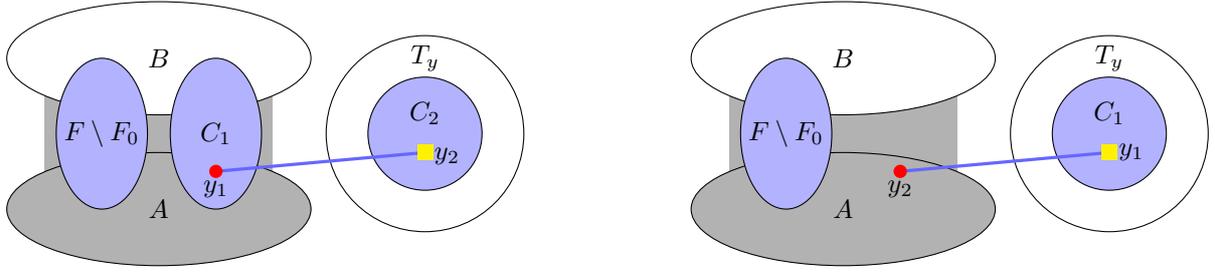

{\it Case 4:} $x\in B$ and $y\not\in H$. \newline

We have that $y$ lies in some copy $T_y$ of $T$. Let $T_1$ and $T_2$ be two trees isomorphic to $T$. We have a graph isomorphism $f:T_y\to T_1$. Let $y_1:=f(y)$ be the copy of $y$ in $T_1$, $\ell \in T_2$ be a leaf (note $|T| \geq 2$) and $\ell'\in T_2$ be the unique neighbour of $\ell$. By assumption, $(T_1\cup T_2)+y_1\ell$ contains a copy $F_0$ of a connected component of $F$. Note that $F_0$ must contain the edge $y_1l$ (and hence the vertices $y_1$ and $\ell$), as otherwise $F_0$ would be contained in $T$. Let $C_1 \subseteq T_1$ and $C_2 \subseteq T_2$ be the connected components of $F_0-y_1l$ containing $y_1$ and $\ell$, respectively. \newline

Let $S$ be a cover of $F_0$ of minimum size, i.e. with $|S|=\sigma(F_0)$. Then $S':=\left[ (S \setminus \{\ell\} ) \cup \{\ell'\} \right] \cap C_2$ is a cover of $C_2$. If $\ell' \in C_2 \setminus S$, the edge $\ell\ell'$ is in $F_0$, so $\ell \in S$. Hence $|S'|\le|S \cap C_2| \leq |S|$. We have equality if and only if $S \subseteq C_2$ and either $\ell' \in C_2 \setminus S$ or $\ell \not \in S$. Since the edge $y_1l$ is in $F_0$, this happens if and only if $C_1=\{y_1\}$, the edge $\ell\ell'$ is in $F_0$, $y_1 \not \in S$, $\ell \in S$ and $\ell' \not \in S$. Now we explain how to embed $F$ in $(H^* \cup T_y)+xy$ (and hence in $(H\cup kT)+xy$).  \newline

Suppose first that $|S'| \leq |S|-1$. Note that $F \cong \left[ (F\setminus F_0)\cup C_1\cup C_2 \right] +y_1\ell$. We will embed $F \setminus F_0$, $C_1$, $C_2$ and $y_1l$ separately.  Embed $F\setminus F_0$ and $C_2$ in $H^*$. This is possible since $S'$ is a cover of $C_2$,

$$\sigma(F\setminus F_0)+|S'| \le \sigma(F)-1=|A| $$ 

and $|B|=m \geq |F|$. As $\ell \in C_2 \setminus S'$ and $x\in B$, we can choose to send $\ell$ to $x$. Next, embed $C_1$ in $T_y$ via $f^{-1}$. In particular, $y_1$ is sent to $y$. Finally, embed the edge $y_1\ell$ as $yx$.\newline

Now suppose $|S'|=|S|$ or equivalently that $C_1=\{y_1\}$, the edge $\ell\ell'$ is in $F_0$, $y_1 \not \in S$, $\ell \in S$ and $\ell' \not \in S$. Note that $F \cong \left[ (F\setminus F_0) \cup (F_0 \setminus \{y_1,\ell\}) \cup \{y_1\} \cup \{\ell\} \right] + y_1l + \ell\ell'$. We will embed $F \setminus F_0$, $F_0 \setminus \{y_1,\ell\}$, $y_1$, $\ell$, $y_1\ell$ and $\ell\ell'$ separately. Embed $F\setminus F_0$ and $F_0 \setminus \{y_1,\ell\}$ in $H^*$. This is possible since $S \setminus \{\ell\}$ is a cover of $F_0 \setminus \{y_1,\ell\}$,

$$\sigma(F \setminus F_0) +|S \setminus \{\ell\}| = \sigma(F)-1 = |A| $$

and $|B|=m \geq |F|$. As $\ell' \in F_0 \setminus \left(S \cup \{y_1\} \right)$ and $x\in B$, we can choose to send $\ell'$ to $x$. Since $|T| \geq 2$, $y$ has a neighbour $y' \in T_y$. Embed $y_1$, $\ell$, $y_1\ell$ and $\ell\ell'$ as $y'$, $y$, $y'y$ and $yx$, respectively.

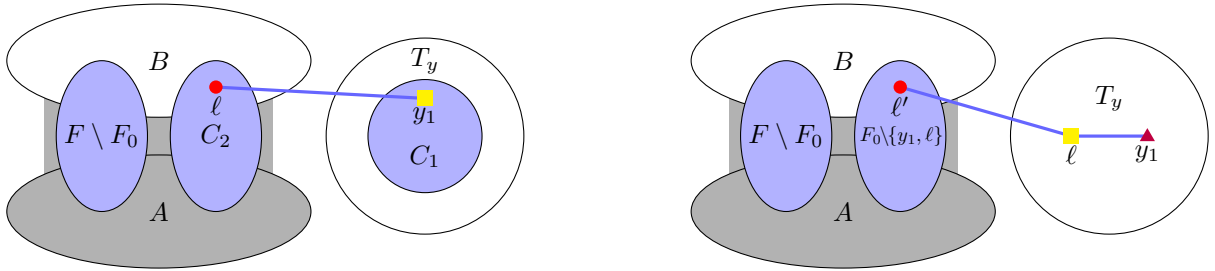
\begin{figure}[H]
\centering
\begin{tikzpicture}
\fill[black!30] (-1.5,0)--(1.5,0)--(1.5,2)--(-1.5,2)--(-1.5,0);
\filldraw[fill=black!30] (0,0) ellipse (2cm and 0.75cm);
\filldraw[fill=black!00] (0,2) ellipse (2cm and 0.75cm);
\draw (0,0) circle (0) node{$A$};
\draw (0,2) circle (0) node{$B$};

\filldraw[fill=blue!30] (-0.75,1) ellipse (0.6cm and 1cm) node[black]{$F\setminus F_0$};
\filldraw[fill=blue!30] (0.75,1) ellipse (0.6cm and 1cm) node[black]{$C_2$};

\filldraw[fill=blue!30] (3.5,1) circle (0.75cm) node[black,anchor=north]{$C_1$};
\draw (3.5,1) circle (1.3cm);
\draw (3.5,2) circle (0) node{$T_y$};

\draw[very thick,blue!60] (0.75,1.65)--(3.5,1.5);
\fill[red] (0.75,1.65) circle (2.5pt) node[black,anchor=north]{$\ell$};

\fill[yellow] (3.4,1.4) rectangle (3.6,1.6); 
\node[anchor=north] at (3.5,1.5) {$y_1$};


\fill[black!30] (7.5,0)--(10.5,0)--(10.5,2)--(7.5,2)--(7.5,0);
\filldraw[fill=black!30] (9,0) ellipse (2cm and 0.75cm);
\filldraw[fill=black!00] (9,2) ellipse (2cm and 0.75cm);
\draw (9,0) circle (0) node{$A$};
\draw (9,2) circle (0) node{$B$};

\filldraw[fill=blue!30] (8.25,1) ellipse (0.6cm and 1cm) node[black]{$F\setminus F_0$};
\filldraw[fill=blue!30] (9.75,1) ellipse (0.6cm and 1cm);


\node[anchor=center] at (9.77,1) {\scalebox{0.75}{$F_0 \!\!\setminus\!\!\{y_1,\ell\}$}};

\draw (12.5,1) circle (1.3cm);
\draw (12.5,1.5) circle (0) node{$T_y$};

\draw[very thick,blue!60] (9.75,1.65)--(12,1)--(13,1);
\fill[red] (9.75,1.65) circle (2.5pt) node[black,anchor=north]{$\ell'$};
\fill[yellow] (11.9,0.9) rectangle (12.1,1.1); 
\node[anchor=north] at (12,1) {$\ell$};


\fill[purple] (12.9,0.933) -- +(0:0.2) -- +(60:0.2) -- +(0,0); 
\node[anchor=north] at (13,1) {$y_1$};

\end{tikzpicture}
\caption{The embeddings in Case 4. The subcases $|S'| \leq |S|-1$ and $|S'|=|S|$ are on the left and right, respectively. The red circle, the yellow square and the purple triangle represent $x$, $y$ and $y'$, respectively.}\label{fig:case4}
\end{figure}
\end{proof}

\section*{Acknowledgements}

The authors thank Robert Johnson and Andrew Treglown for their guidance and useful feedback on a draft of this paper.


\begin{thebibliography}{99}

\bibitem{counterfam} 

D. Chakraborti and P.-S. Loh. \emph{Minimizing the numbers of cliques and cycles of fixed size in an $F$-saturated graph.} European Journal of Combinatorics, 90, p.103185, 2020.

\bibitem{Survey} 

B. L. Currie, J. R. Faudree, R. J. Faudree and J. R. Schmitt. \emph{A Survey of Minimum Saturated Graphs.}
The Electronic Journal of Combinatorics, DS19: Oct 11, 2021.

\bibitem{KT} 

L. Kászonyi and Z. Tuza. \emph{Saturated graphs with minimal number of edges.} Journal of Graph Theory, 10(2), pp.~203-210, 1986.


\bibitem{Pik1} 

O. Pikhurko. \emph{The Minimum Size of Saturated Hypergraphs.} Combinatorics, Probability and Computing, 8(5), pp.~483-492, 1999.

\bibitem{Pik2} 

O. Pikhurko. \emph{Results and Open Problems on Minimum Saturated Hypergraphs.} Ars Combinatoria, 72, pp.~111-127, 2004.


\bibitem{TrusTuza}

M. Truszczynski and Z. Tuza. \emph{Asymptotic results on saturated graphs.} Discrete Mathematics, 87(3), pp.~309-314, 1991.

\bibitem{tuzaconj} 

Z. Tuza. \emph{A generalization of saturated graphs for finite languages.} Tanulm\'anyok—MTA Sz\'amit\'astech. Automat. Kutat\'o Int. Budapest, (185), pp.~287-293, 1986.

\end{thebibliography}
\end{document}